\definecolor{webgreen}{rgb}{0,.5,0}
\definecolor{webbrown}{rgb}{.8,0,0}
\definecolor{emphcolor}{rgb}{0.95,0.95,0.95}
\ifpdf \hypersetup{pdftex,
%             pdftitle={Decision Making with Poisson process},
%             pdfauthor={Semih Sezer},
            pdfstartview=FitH, %%Fit, FitB, FitH
            bookmarksopen=true,
            bookmarksnumbered=true
} \else \hypersetup{dvips} \fi
\newcommand {\B}{\mathcal{B}}
\numberwithin{equation}{section}
\newtheorem{theorem}{Theorem}[section]
\newtheorem{proposition}{Proposition}[section]
\newtheorem{remark}{Remark}[section]
\newtheorem{lemma}{Lemma}[section]
\newtheorem{example}{Example}[section]
\newtheorem{assump}{Assumption}[section]
\numberwithin{remark}{section} \numberwithin{proposition}{section}
\numberwithin{corollary}{section}
\newcommand {\R}{\mathbb{R}}
\newcommand {\p}{\mathbb{P}}
\newcommand {\E}{\mathbb{E}}
\newcommand{\diff}{{\rm d}}
\newcommand{\lev}{L\'{e}vy }
\newcommand{\EQ}{\rm{e}_q}
\title[Optimality of Refraction Strategies   for Spectrally Negative L\'{e}vy Processes]{Optimality of Refraction Strategies   for Spectrally Negative L\'{e}vy Processes}
\thanks{$*$\, Department of Probability and Statistics,
Centro de Investigaci\'on en Matem\'aticas, Apartado Postal 402, Guanajuato GTO 36000, Mexico.  Email: dher@cimat.mx, jluis.garmendia@cimat.mx}
\thanks{$\dag$\, Department of Mathematics,
Faculty of Engineering Science, Kansai University, 3-3-35 Yamate-cho, Suita-shi, Osaka 564-8680, Japan. Email: kyamazak@kansai-u.ac.jp.   }
\thanks{This version: \today.  J. L. P\'erez  is  supported  by  CONACYT,  project  no.\ 241195.
K. Yamazaki is in part supported by MEXT KAKENHI Grant Number  26800092, the Inamori foundation research grant, and the Kansai University Subsidy for Supporting young Scholars 2014.}
\author[D.\ Hern\'andez-Hern\'andez]{Daniel Hern\'andez-Hern\'andez$^*$}
\author[J. L. P\'erez]{Jos\'e-Luis P\'erez$^*$}
\author[K. Yamazaki]{Kazutoshi Yamazaki$^\dag$}
\date{}
\begin{document}

\begin{abstract}
We revisit a stochastic control problem of optimally modifying the underlying spectrally negative \lev process.  A strategy must be absolutely continuous with respect to the Lebesgue measure, and  the objective is to minimize the total costs of the running and controlling costs. Under the assumption that the running cost function is convex, we show the optimality of a refraction strategy.
We also obtain the convergence of the optimal refraction strategies and the value functions, as the control set is enlarged, to those in the relaxed case without the absolutely continuous assumption. Numerical results are further given to confirm these analytical results.

\noindent \small{\textbf{Key words:} stochastic control;
refracted \lev processes; scale functions \\
\noindent  AMS 2010 Subject Classifications: 60G51, 93E20, 49J40}\\
\end{abstract}

\maketitle

\section{Introduction}
We consider a class of stochastic control problems under the restriction that strategies must be absolutely continuous with respect to the Lebesgue measure.  Given a spectrally negative \lev process, the controller aims to optimally decrease the process so as to
minimize the sum of the expected running and controlling costs.  The running cost is modeled as a convex function $h$ of the controlled process that is accumulated over time.  The controlling cost is proportional to the size of control.  Our objective is to show the optimality of the \emph{refraction strategy} so that the controlled process becomes a \emph{refracted \lev process} of  Kyprianou and Loeffen \cite{kyprianou2010refracted}, with a suitable choice of the refraction boundary level.

In the last decade, there have been great developments in the theory of \lev processes and its applications in stochastic control. A representative example is its contributions in de Finetti's  optimal dividend problem, where the classical compound Poisson model has been generalized to a spectrally negative \lev model.  The expected net present value (NPV) of total dividends under the \emph{reflection strategy}, so that the controlled process becomes a reflected \lev process, can be concisely written in terms of the scale function (see \cite{Avram_et_al_2007}).  While the optimality may fail depending on the choice of the \lev measure, Loeffen \cite{Loeffen_2008} obtains a sufficient condition for optimality and in particular shows that it holds when the \lev measure has a {\it completely monotone density}.  Other stochastic control problems that have been explicitly solved via the scale function include  the dual model of the optimal dividend problem \cite{Bayraktar_2012}, inventory control problems as in \cite{Baurdoux_Yamazaki_2015,Yamazaki_2013}, and games between two players \cite{Hernandez_Yamazaki_2013}.

While the reflection strategy is commonly shown to be optimal in these papers, implementing it in real life is often not feasible nor realistic.  Most likely in every scenario, the ability of controlling is limited and hence is more reasonable to restrict the amount of modification one can make to the underlying process.

In classical settings of the optimal dividend problem, reflection strategies that are shown to be optimal always lead to  ruin in finite time; these results have been in dispute and there has been a need for new models so as to avoid these undesirable conclusions.  In currency rate control (see, e.g., \cite{jeanblanc1993impulse} and \cite{mundaca1998optimal}), where a central bank controls the currency rate so as to prevent it from going too high or too low,  reflection strategies, that are shown to be optimal  under many models, are not implementable in real life. In 2011, Swiss central bank introduced the peg so as to stabilize their currency in response to its escalating value against other currencies: they set a ceiling on its rate against Euro, and literally implemented a reflection strategy. However, this only lasted until the beginning of 2015 when they had to admit the enormous cost of doing it and scraped the ceiling.

One way to model these limitations on the control set is to add an extra restriction on the set of admissible strategies to be absolutely continuous with respect to the Lebesgue measure.  Kyprianou et al.\ \cite{Kyprianou_Loeffen_Perez}, in particular, consider the optimal dividend problem under this restriction. They show under the {\it completely monotone assumption on the \lev measure} that a refraction strategy is optimal: Roughly speaking,  the resulting controlled process becomes a refracted \lev process that progresses like the original \lev process below a chosen threshold while it does like a drift-changed process above it.

In this paper, we revisit the stochastic control problem that minimizes the convex running costs.  The setting is similar to the existing literature on singular/impulse control problems as in \cite{Bensoussan_2009}, \cite{Bensoussan_2005} and \cite{Yamazaki_2013}.  The convex assumption is typically needed and is required
 in, e.g., \cite{baccarin2002optimal, constantinides1978existence, sulem1986solvable}.   Differently from these papers, we consider a version with an additional condition that the strategy is absolutely continuous.  More precisely, a strategy must be of the form $L_t = \int_0^t \ell_s \diff s$, $t \geq 0$, with $\ell$ restricted to take values in $[0,\delta]$ uniformly in time.
 As in \cite{Kyprianou_Loeffen_Perez}, our objective is to show the optimality of a refraction strategy.  While a reflected \lev process can be dealt relatively easily because it moves just like the original \lev process except at the times running maximum/minimum is updated, a refracted \lev process moves in two different ways depending on whether it is above or below the given threshold.  The computation thus becomes more intricate.
 
  In order to tackle our problem, we take the following steps.
 \begin{enumerate}
 \item By directly using the resolvent measure for the refracted \lev process as in \cite{kyprianou2010refracted}, we first write  the expected NPV of total costs using the scale function under each refraction strategy.
 \item The candidate threshold $b^* \in [-\infty, \infty]$ is then chosen by observing the identity \eqref{identity_v_u} below.  We show that such $b^*$ is given as a root of a monotone function $I$ defined in \eqref{def_I_new} below, or otherwise it is either $-\infty$ or $\infty$.
 \item We then verify the optimality of such strategy.  Toward this end, we derive a sufficient condition for optimality (verification lemma), and then show that it is equivalent to certain conditions on the smoothness and slope of the value function.   We observe that the level $b^*$ is such that the candidate value function becomes smooth and convex; using these facts,  we complete the proof of optimality.\end{enumerate}

 It is important to highlight the fact that this set of results are obtained without using directly the HJB equation associated with this control problem and, instead, it is used in an indirect way to derive sufficient conditions for optimality, based only on the derivative of the value function; see Lemmas \ref{verificationlemma} and \ref{tussenlemma}.

In addition to solving this problem, we analyze the behavior as the upper bound $\delta$, of the control set, becomes large. Being consistent with our intuition, we show that the optimal refraction strategy of this problem converges to the reflection strategy that has been shown to be optimal in the original problem without the absolutely continuous condition.  In particular, we show the convergence of the optimal threshold $b^*$ and value function to those obtained in  \cite{Yamazaki_2013}.
In order to discuss the practical side of implementing the obtained optimal strategy, we  give a series of computational experiments using the phase-type \lev process and quadratic cost function.  We confirm the optimality as well as the convergence as $\delta \uparrow \infty$ to the reflection strategy.

% Differently from reflection strategies, dealing with refraction strategy, we need to deal with two different processes.
%
%
% In addition, typical singular control models often lead to undesirable results.  For example, in de Finetti's problem,  optimal reflection strategy always leads to ruin in a finite time.  For these reasons, it is more reasonable to set a limit and focus on absolutely continuous strategies.
%
%Under the restriction that the strategy must be absolutely continuous strategy, it is naturally guessed under certain conditions that an optimal strategy becomes a
%
%
%In the above mentioned problems where the reflection strategy is shown to be optimal, implementing the strategy is in real life typically not feasible nor realistic.  In de Finetti's dividend problem, for example, it is reasonable to restrict the amount of dividend to be paid all time.
%A refracted \lev process, recently developed by \cite{kyprianou2010refracted}, is in word a process that is refracted at  a given threshold.  It progresses like the original \lev process below the threshold while it does like a drift-changed process above the threshold.  This process is important in applications.
%
%
%Recently, Kyprianou and Loeffen developed a version of the reflected \lev process..
%
%Sometimes, reflection strategy is not realistic.  For example, in de Finetti's problem, the ruin occurs always occur at a finite time.  In inventory (and currency control), it is not practically possible.

The rest of the paper is organized as follows. The problem  is defined in Section \ref{section_model}.  In Section \ref{section_refraction_strategy}, we first review the refracted spectrally negative \lev process and the scale function.  We then compute the NPV of total costs under a  refraction strategy and choose a candidate refraction boundary level $b^*$.  Section \ref{section_optimality} verifies the optimality of the selected refraction strategy.  Section \ref{section_convergence} studies the convergence as $\delta \uparrow \infty$ of the optimal refraction strategy to the optimal reflection strategy.  We conclude the paper with numerical results in Section \ref{section_numerics}.

\section{Mathematical Model} \label{section_model}
Let $(\Omega, \mathcal{F}, \p)$ be a probability space hosting a \emph{spectrally negative \lev process} $X = \left\{X_t; t \geq 0 \right\}$ whose \emph{Laplace exponent} is given by
\begin{align}
\psi(\theta) := \log\E [ \mathrm{e}^{\theta X_1}] =  \gamma \theta + \frac {\sigma^2} 2 \theta^2 + \int_{(-\infty, 0)} ( \mathrm{e}^{\theta z} - 1 -\theta z \mathbf{1}_{\{z > -1\})} \nu (\diff z), \quad \theta \geq 0, \label{laplace_spectrally_positive}
\end{align}
where $\nu$ is a \lev measure with  support in $(-\infty,0)$ and  satisfying the integrability condition $\int_{(-\infty,0)} (1 \wedge z^2) \nu(\diff z) < \infty$.  It has paths of bounded variation if and only if $\sigma = 0$ and $\int_{(-1,0)}|z|\, \nu(\diff z) < \infty$; in this case, we write \eqref{laplace_spectrally_positive} as
\begin{align*}
\psi(\theta)   =  \tilde{\gamma} \theta +\int_{(-\infty,0)} ( \mathrm{e}^{\theta z}-1 ) \nu (\diff z), \quad \theta \geq 0,
\end{align*}
with $\tilde{\gamma} := \gamma - \int_{(-1,0)}z\, \nu(\diff z)$.  We exclude the case in which $X$ is the negative of a subordinator (i.e., $X$ has monotonically decreasing paths a.s.). This assumption implies that $\tilde{\gamma} > 0$ when $X$ is of bounded variation.
Let $\mathbb{P}_x$ be the conditional probability under which $X_0 = x$ (also let $\mathbb{P} \equiv \mathbb{P}_0$), and let $\mathbb{F} := \left\{ \mathcal{F}_t; t \geq 0 \right\}$ be the filtration generated by $X$.

Fix $\beta \in \R$, $\delta > 0$ and a measurable function $h: \R \rightarrow \R$ that is specified in Assumption \ref{assump_h} below.  Define $\Pi_\delta$ as the set of \emph{absolutely continuous strategies} $\ell$ given by adapted processes $L_t = \int_0^t \ell_s \diff s$, $t \geq 0$, with $\ell$ restricted to take values in $[0,\delta]$ uniformly in time.  For $q > 0$ fixed, the objective is to consider the NPV of the expected total costs
\begin{align}
v_{\ell}(x) := \E_x \Big[\int_0^\infty  \mathrm{e}^{-qt} (h (U_t^{\ell}) + \beta \ell_t ) \diff t \Big], \label{v_pi_def}
\end{align}
where
\begin{align*}
U_t^{\ell} := X_t - L_t, \quad t \geq 0,
\end{align*}
and compute the (optimal) value function
\begin{align}
v(x) := \inf_{{\ell} \in \Pi_\delta} v_{\ell}(x), \quad x \in \R, \label{def_value_function}
\end{align} as well as the optimal strategy that attains it, if such a strategy exists.

We make the following  standing assumptions on the \lev process $X$ and the running cost function $h$.
\begin{assump} \label{assump_levy_measure}
\begin{enumerate}
\item For the case $X$ is of bounded variation, we assume that $\tilde{\gamma} - \delta > 0$.
\item We assume that there exists $\bar{\theta} > 0$ such that  $\int_{(-\infty, -1]} \exp (\bar{\theta} |z|) \nu(\diff z) < \infty$.
%and hence $\exp \psi(-\bar{\theta}) = \E [\exp (-\bar{\theta} X_1)] < \infty$.
\end{enumerate}
\end{assump}
%We further assume the following for the running cost function $h$.
\begin{assump} \label{assump_h}
We assume that $h$ is convex and has at most polynomial growth in the tail.  That is to say, there exist $k_1, k_2 > 0$ and $N \in \mathbb{N}$ such that $h(x) \leq k_1 + k_2 |x|^N$ for all $x \in \R$ such that $|x| > m$.
\end{assump}

Define the drift-changed \lev process
\begin{align}
Y_t := X_t - \delta t, \quad t \geq 0, \label{def_Y}
\end{align}
which is the resulting controlled process if $\ell$ is uniformly set to  be the maximal value $\delta$. Assumption \ref{assump_levy_measure} (1) guarantees that $Y$ is not the negative of a subordinator, and hence is again a spectrally negative \lev process.  This is commonly assumed (see, e.g., \cite{Kyprianou_Loeffen_Perez}) so that the refracted \lev process given below is well-defined.

By Assumption \ref{assump_levy_measure} (2), the domain of the Laplace exponent \eqref{laplace_spectrally_positive} can be extended to $(-\bar{\theta}, \infty)$ and, by its continuity, we can choose sufficiently small $\theta > 0$ such that $\psi(-\theta) < q$. With this choice of $\theta$,
\begin{align}
\E [ \mathrm{e}^{-\theta X_{\EQ} }] = q \int_0^\infty  \mathrm{e}^{-q t} \E [ \mathrm{e}^{-\theta X_t }] \diff t = q \int_0^\infty  \mathrm{e}^{-q t}  \mathrm{e}^{\psi(-\theta)t} \diff t < \infty, \label{laplase_x_eq_minus_theta}
\end{align}
where $\EQ$ is an independent exponential random variable with parameter $q$.

Assumptions \ref{assump_levy_measure} (2) and \ref{assump_h} guarantee that $v_{\ell}$ as in \eqref{v_pi_def} is well-defined and finite for all ${\ell} \in \Pi_\delta$. To see this, by the convexity of $h$ and because $Y_t \leq U_t^{\ell} \leq X_t$,
\begin{align}
h(U_t^{\ell}) \leq   h(X_t) \vee h(Y_t), \label{h_upper}
\end{align}
while, with $\underline{h} := \inf_{x \in \R} h(x) \in [-\infty, \infty)$, we have
\begin{align}
h(U_t^{\ell}) \geq  \left\{ \begin{array}{ll}  \underline{h} & \textrm{if } \underline{h} > -\infty, \\ h(X_t) \wedge h(Y_t) & \textrm{otherwise}.  \label{h_lower}
\end{array} \right.
\end{align}
Hence, for any random time $T$ (independent of $\EQ$),
\begin{align} \label{uniformly_integrability_A}
\begin{split}
\E_x \Big[\int_T^\infty  \mathrm{e}^{-qt} \sup_{{\ell} \in \Pi_\delta} |h (U_t^{\ell})| \diff t \Big] \leq \E_x \Big[\int_T^\infty  \mathrm{e}^{-qt} (|h (X_t)|+ |h (Y_t)|+|\underline{h}| \mathbf{1}_{\{\underline{h} > -\infty\}} ) \diff t \Big] 
\\
= q^{-1}\E_x \Big[\mathbf{1}_{\{ {\EQ} > T \}} (|h (X_{\EQ})| + |h (Y_{\EQ})| +  |\underline{h}| \mathbf{1}_{\{\underline{h} > -\infty\}}) \Big] 
< \infty, 
\end{split}
\end{align}
where the finiteness holds by \eqref{laplase_x_eq_minus_theta} and Assumption \ref{assump_h}.
The finiteness \eqref{uniformly_integrability_A} will be particularly important in the verification step (Lemma \ref{verificationlemma} below). As is commonly used in verification arguments, localization is first needed;  \eqref{uniformly_integrability_A} lets us interchange the limits over integrals.

%where
%\begin{align*}
%h_b(x) := h(x) + \delta 1_{\{x > b\}}, \quad x \in \R.
%\end{align*}

\begin{remark} \label{remark_dual}
We can also consider a version of this problem where a linear drift is added to the increments of $X$ (as opposed to being subtracted): one wants to minimize, for some $\tilde{\beta} \in \R$, the NPV
\begin{align*}
\tilde{v}_{\ell}(x) := \E_x \Big[\int_0^\infty  \mathrm{e}^{-qt} (h (X_t + L_t) + \tilde{\beta} \ell_t ) \diff t \Big].
\end{align*}
However, it is easy to verify that this problem is equivalent to the problem described above.

To see this, we use $Y$ as in \eqref{def_Y} and set $\tilde{L}_t := \delta t - L_t$ (which is admissible with $\tilde{L}_t = \int_0^t \tilde{\ell}_s \diff s$ where $\tilde{\ell}_s := \delta - \ell_s \in [0, \delta]$ a.s.). Then, we can write
\begin{align*}
\tilde{v}_{\ell}(x) = \E_x \Big[\int_0^\infty  \mathrm{e}^{-qt} (h (Y_t - \tilde{L}_t) - \tilde{\beta} \tilde{\ell}_t ) \diff t \Big] + \frac {\tilde{\beta} \delta} q.
\end{align*}
Hence it is equivalent to solving our problem for $\beta := -\tilde{\beta}$.
\end{remark}

\section{Refraction Strategies} \label{section_refraction_strategy}

One of the objectives of this paper is to show the optimality of the \emph{refraction strategies}, say $\ell^b \in \Pi_\delta$, under which the controlled process becomes the refracted \lev process $U^b = \{U^b_t; t \geq 0\}$ of  \cite{kyprianou2010refracted}, with a suitable choice of the refraction boundary $b \in \R$.  By \cite[Theorem 1 and Remark 3]{kyprianou2010refracted}, this is a strong Markov process given by the unique strong solution to the SDE
\begin{align*}
\diff U_t^b  = \diff X_t - \delta \mathbf{1}_{\{U_t^b > b\}} \diff t, \quad t \geq 0.
\end{align*}
Namely, $U^b$ progresses like $X$ below the boundary $b$ while it does like $Y$ above $b$.
Let us write the corresponding NPV of the total costs associated with $\ell^b$ by
\begin{align}
v_b(x) := \E_x \Big[\int_0^\infty  \mathrm{e}^{-qt} (h (U_t^b ) + \beta \delta \mathbf{1}_{\{U_t^b > b\}}) \diff t \Big], \quad x \in \R.  \label{v_b_def}
\end{align}

\subsection{Scale functions}
As it has been studied in \cite{kyprianou2010refracted}, the expected NPV \eqref{v_b_def}  can be expressed in terms of  the scale functions of the two spectrally negative \lev processes $X$ and $Y$.
Following the same notations as in \cite{kyprianou2010refracted}, we use $W^{(q)}$ and $\mathbb{W}^{(q)}$ for the scale functions of $X$ and $Y$, respectively.  Namely,  these are the mappings from $\R$ to $[0, \infty)$ that take value zero on the negative half-line, while on the positive half-line they are strictly increasing functions that are defined by their Laplace transforms:
\begin{align} \label{scale_function_laplace}
\begin{split}
\int_0^\infty  \mathrm{e}^{-\theta x} W^{(q)}(x) \diff x &= \frac 1 {\psi(\theta)-q}, \quad \theta > \Phi(q), \\
\int_0^\infty  \mathrm{e}^{-\theta x} \mathbb{W}^{(q)}(x) \diff x &= \frac 1 {\psi(\theta)-\delta \theta -q}, \quad \theta > \varphi(q),
\end{split}
\end{align}
where
\begin{align}
\begin{split}
\Phi(q) := \sup \{ \lambda \geq 0: \psi(\lambda) = q\} \quad \textrm{and} \quad \varphi(q) := \sup \{ \lambda \geq 0: \psi(\lambda) - \delta \lambda = q\}.
\end{split}
\label{def_varphi}
\end{align}
By the strict  convexity of $\psi$, we derive the strict inequality $\varphi(q) > \Phi(q) > 0$.
\par Scale functions appear in the vast majority of
	known identities concerning boundary crossing problems and related path decompositions.  In order to illustrate the importance of the scale functions we provide 
	the following result, the so-called two-sided exit problem (see for instance Theorem 8.1 in \cite{Kyprianou_2006}). Define
	\[
	\tau_a^+=\inf\{t>0:X_t>a\}\quad\text{and}\quad\tau_0^-=\inf\{t>0:X_t<0\}.
	\]
	Then for all $q\geq 0$ and $x<a$, $\E_x \big[e^{-q\tau_a^+}1_{\{\tau_a^+<\tau_0^-\}} \big]= W^{(q)}(x) / W^{(q)}(a)$.
For other applications of the scale function, see, among others,  \cite{Kuznetsov2013} and \cite{Kyprianou_2006}.

Fix $\lambda \geq 0$ and define $\psi_\lambda(\cdot)$ as the Laplace exponent of $X$ under $\p^\lambda$ with the change of measure
\begin{align*}
\left. \frac {\diff \p^\lambda} {\diff \p}\right|_{\mathcal{F}_t} = \exp(\lambda X_t - \psi(\lambda) t), \quad t \geq 0;
\end{align*}
see  \cite[Section 8]{Kyprianou_2006}.
% for all $s > -\lambda$,
%\begin{align*}
%\psi_\lambda(s) :=\Big(  \lambda \sigma^2  + c - \int_0^1 u (e^{-\lambda u}-1) \nu(\diff u)\Big) s
%+\frac{1}{2}\sigma^2 s^2 +\int_0^\infty (e^{- s u}-1 + s u 1_{\{ 0 < u < 1 \}}) e^{-\lambda u}\,\nu(\diff u).
%\end{align*}
 Suppose that $W_\lambda^{(q)}$ is the scale function associated with $X$ under $\p^\lambda$ (or equivalently with $\psi_\lambda(\cdot)$).
Then, by Lemma 8.4 of \cite{Kyprianou_2006}, $W_\lambda^{(q-\psi(\lambda))}(x) =  \mathrm{e}^{-\lambda x} W^{(q)}(x)$, $x \in \R$.  In particular, we define
\begin{align}
W_{\Phi(q)}(x) := W_{\Phi(q)}^{(0)}(x) =  \mathrm{e}^{-\Phi(q) x} W^{(q)}(x), \quad x \in \R, \label{scale_function_version}
\end{align}
which is known to be an increasing function and,  as in Lemma 3.3 of \cite{Kuznetsov2013},
\begin{align}
W_{\Phi(q)} (x) \nearrow \psi'(\Phi(q))^{-1}  < \infty \quad \textrm{as } x \rightarrow \infty. \label{W_q_limit}
\end{align}

Below, we summarize the properties of the scale function that will be necessary in deriving our results.
\begin{remark} \label{remark_smoothness_zero}
\begin{enumerate}
\item If $X$ is of unbounded variation or the \lev measure is atomless, it is known that $W^{(q)}$ is $C^1(\R \backslash \{0\})$; see, e.g.,\ \cite[Theorem 3]{Chan_2009}.
%Hence,
%\begin{enumerate}
%\item $Z^{(q)}$ is $C^1 (\R \backslash \{0\})$ and $C^0 (\R)$ for the bounded variation case, while it is $C^2(\R \backslash \{0\})$ and $C^1 (\R)$ for the unbounded variation case, and
%\item $\overline{Z}^{(q)}$ is $C^2(\R \backslash \{0\})$ and $C^1 (\R)$ for the bounded variation case, while it is $C^3(\R \backslash \{0\})$ and $C^2 (\R)$ for the unbounded variation case.
%\end{enumerate}
\item Regarding the asymptotic behavior near zero, as in Lemmas 3.1 and 3.2 of \cite{Kuznetsov2013},
\begin{align}\label{eq:Wqp0}
\begin{split}
W^{(q)} (0) &= \left\{ \begin{array}{ll} 0 & \textrm{if $X$ is of unbounded
variation,} \\ \frac 1 {\tilde{\gamma}} & \textrm{if $X$ is of bounded variation,}
\end{array} \right. \\
W^{(q)\prime} (0+) &:= \lim_{x \downarrow 0}W^{(q)\prime} (x) =
\left\{ \begin{array}{ll}  \frac 2 {\sigma^2} & \textrm{if }\sigma > 0, \\
\infty & \textrm{if }\sigma = 0 \; \textrm{and} \; \nu(-\infty,0) = \infty, \\
\frac {q + \nu(-\infty, 0)} {\tilde{\gamma}^2} &  \textrm{if }\sigma = 0 \; \textrm{and} \; \nu(-\infty, 0) < \infty.
\end{array} \right.
\end{split}
\end{align}
\item As in (8.22) and Lemma 8.2 of \cite{Kyprianou_2006}, $ {W^{(q)\prime}(y+)} / {W^{(q)}(y)} \leq  {W^{(q)\prime}(x+)} / {W^{(q)}(x)}$ for  $y > x > 0$.
In all cases, $W^{(q)\prime}(x-) \geq W^{(q)\prime}(x+)$ for all $x >0$.
\end{enumerate}
\end{remark}

We also define, for all $x > 0$,
\begin{align}
\Theta^{(q)} (x) &:= W^{(q)\prime}(x+) - \Phi(q) W^{(q)}(x) =  \mathrm{e}^{\Phi(q)x} W_{\Phi(q)}'(x+) > 0.\label{def_Theta}
\end{align}
Let $\underline{X}_t := \inf_{0 \leq t' \leq t} X_{ t'}$, $t \geq 0$, be the running infimum process.
By Corollary 2.2 of \cite{Kuznetsov2013}, for Borel subsets in the nonnegative half line,
\begin{align}
\p \{ -\underline{X}_{\EQ} \in \diff x \} = \frac q {\Phi(q)} W^{(q)} (\diff x) - q W^{(q)} (x) \diff x = \frac q {\Phi(q)}[\Theta^{(q)}(x) \diff x + W^{(q)}(0) \delta_0(\diff x)], \label{density_running_min} \end{align}
where $W^{(q)}(\diff x)$ is the measure such that $W^{(q)}(x) = \int_{[0,x]}W^{(q)}(\diff z)$  (see  \cite[(8.20)]{Kyprianou_2006}) and $\delta_0$ is the Dirac measure at zero.

\begin{remark} \label{remark_wiener_hopf}
\begin{enumerate}
\item  For the case of spectrally negative \lev process $X$,
\begin{align*}
\E [ \mathrm{e}^{\theta \underline{X}_{\EQ}}]= \frac q { \Phi(q)}\frac { \theta - \Phi(q)} {\psi(\theta) -q}, \quad \theta \geq 0,
\end{align*}
and $\overline{X}_{\EQ}$ is exponentially distributed with parameter $\Phi(q)$ where $\overline{X}_t := \sup_{0 \leq t' \leq t} X_{ t'}$, $t \geq 0$, is the running supremum process; see Section 8 of \cite{Kyprianou_2006}.
\item It is easy to see that \begin{align} \label{expressions_theta}
\begin{split}
\int_0^{\infty}\Theta^{(q)}(u)\diff u &= \frac {\Phi(q)} q \p \{ \underline{X}_{\EQ} \neq 0\} = \frac{\Phi(q)}{q} - W^{(q)}(0), \\
\int_0^{\infty}u\Theta^{(q)}(u)\diff u&=\frac{\Phi(q)}{q}\mathbb{E}[-\underline{X}_{\EQ}],\\
%\int_0^{\infty}u\Theta^{(q)\prime}(u)\diff u&=-\frac{\Phi(q)}{q}\notag\\
\int_0^{\infty} \mathrm{e}^{-\varphi(q)u}\Theta^{(q)}(u)\diff u &= \frac {\Phi(q)} q \E \Big[  \mathrm{e}^{\varphi(q) \underline{X}_{\EQ}}\mathbf{1}_{\{\underline{X}_{\EQ} \neq 0\}} \Big] = \frac {\varphi(q)-\Phi(q)} {\delta \varphi(q)} - W^{(q)}(0).
\end{split}
\end{align}
For any $-M < x$, by \eqref{def_Theta} and \eqref{density_running_min},
\begin{multline}
\int_{-\infty}^{-M} |h(y)|   \mathrm{e}^{-\Phi(q)y} W_{\Phi(q)}' (x-y) \diff y =   \mathrm{e}^{-\Phi(q)x} \int_{-\infty}^{-M} |h(y)|  \Theta^{(q)}(x-y) \diff y \\
= \frac {\Phi(q)} q  \mathrm{e}^{-\Phi(q)x}  \int_{-\infty}^{-M} |h(y)|  \p \{ \underline{X}_{\EQ} +x \in \diff  y\}  =  \frac {\Phi(q)} q  \mathrm{e}^{-\Phi(q)x} \E_x [|h(\underline{X}_{\EQ})| \mathbf{1}_{\{\underline{X}_{\EQ} \leq -M \}}]. \label{eq_integrability1}
\end{multline}
\end{enumerate}
\end{remark}

Now, recall Assumption \ref{assump_levy_measure} (2) and fix any $0 < \theta < \bar{\theta}$.
By the independence of $X_{\EQ} - \underline{X}_{\EQ}$ and $\underline{X}_{\EQ}$, $\E [ \mathrm{e}^{-\theta X_{\EQ}}] = \E [ \mathrm{e}^{-\theta (X_{\EQ} - \underline{X}_{\EQ})}] \E [ \mathrm{e}^{\theta |\underline{X}_{\EQ}| }]$, where in particular $X_{\EQ} - \underline{X}_{\EQ} \sim \overline{X}_{\EQ}$ (by duality) is exponentially distributed with parameter $\Phi(q)$ as in Remark \ref{remark_wiener_hopf} (1).   Hence, we see that
\begin{align}
\E [ \mathrm{e}^{\theta |\underline{X}_{\EQ}| }] < \infty. \label{finiteness_laplace_negagtive}
\end{align}
By \eqref{finiteness_laplace_negagtive}
 and Assumption  \ref{assump_h},
 \eqref{eq_integrability1} is  finite.

%\red{may be able to remove about $\Upsilon$}We also define, for all $x > 0$,
%\begin{align*}
%\overline{\Upsilon}^{(q)} (x) &:= \int_0^\infty e^{- \varphi(q) z} \Theta^{(q)} (z+x) \diff z, \\
%\Upsilon^{(q)} (x) &:= \overline{\Upsilon}^{(q)'} (x) =- \Theta^{(q)}(x) + \varphi(q) \int_0^\infty e^{-\varphi(q) z} \Theta^{(q)} (z+x) \diff z \\
%  &= - W^{(q)\prime}(x) + (\varphi(q) - \Phi(q)) \int_0^\infty e^{-\varphi(q) z} W^{(q)\prime}(z+x) \diff z.
%\end{align*}

\subsection{The expected NPV \eqref{v_b_def} in terms of scale functions}
Fix $b, x \in \R$. By Theorem 6 (iv) of \cite{kyprianou2010refracted}, the resolvent measure
\begin{align*}
R_b (x,B) :=q^{-1}\p_x \{ U^b_{\EQ} \in B \} =  \E_x \Big[\int_0^\infty  \mathrm{e}^{-qt} \mathbf{1}_{\{U_t^b \in B\}} \diff t \Big], \quad B \in \mathcal{B} (\R),
\end{align*}
admits a density
\begin{align} \label{resolvent_measure}
R_b (x, \diff y) = (r_b^{(1)}(x,y) + r_b^{(2)}(x,y) \mathbf{1}_{\{x > b\}})\diff y, \quad y \in \R,
\end{align}
given by
\begin{align} \label{r_expression}
\begin{split}
r_b^{(1)} (x,y) &:= \left\{ \begin{array}{ll}  \frac {\varphi(q)-\Phi(q)} {\delta \Phi(q)}   \mathrm{e}^{\Phi(q) (x-b) - \varphi(q) (y -b)}  & y \geq b, \\  \mathrm{e}^{\Phi(q)(x-b)}  \frac {\varphi(q)-\Phi(q)} {\Phi(q)}  \mathrm{e}^{\varphi(q) b} \int_b^\infty  \mathrm{e}^{-\varphi(q)z} W^{(q)\prime} (z-y) \diff z - W^{(q)}(x-y) & y < b, \end{array} \right. \\
r_b^{(2)} (x,y) &:=\left\{ \begin{array}{ll}    \mathrm{e}^{-\varphi(q)(y-b)}M(x; b) - \mathbb{W}^{(q)}(x-y) & y \geq b, \\
 \delta M(x; b)  \mathrm{e}^{\varphi(q) b} \int_{b}^\infty  \mathrm{e}^{-\varphi(q) z} W^{(q)\prime}(z-y) \diff z - \delta \int_{b}^x \mathbb{W}^{(q)}(x-z) W^{(q)\prime} (z-y) \diff z & y < b, \end{array} \right.
 \end{split}
\end{align}
where we define
\begin{align}
%B^{(q)}(s, b) &:= e^{\varphi(q) b} \int_b^\infty e^{-\varphi(q)z} W^{(q)\prime} (z+s) \diff z \\
%&= - W^{(q)}(b+s) + \varphi(q) e^{\varphi(q) b} \int_b^\infty e^{-\varphi(q) z} W^{(q)} (z+s) \diff z \\
%&=- W^{(q)}(b+s) + \varphi(q) \int_0^\infty e^{-\varphi(q) z} W^{(q)} (z+b+s) \diff z, \quad s > -b, \\
M(x;b) &:=  (\varphi(q)-\Phi(q))   \mathrm{e}^{-\Phi(q) b} \int_b^x  \mathrm{e}^{\Phi(q)z} \mathbb{W}^{(q)}(x-z) \diff z, \quad x > b. \label{def_M}
\end{align}

Hence, \eqref{v_b_def} can be written (see  Remark \ref{remark_finiteness_v_1_2} below) as
\begin{align}
v_b(x) = v_b^{(1)} (x) + v_b^{(2)} (x) \mathbf{1}_{\{x > b\}} \label{v_b_sum},
\end{align}
 where
\begin{align} \label{v_b_expression}
\begin{split}
v_b^{(1)}(x)
%&=\int_b^\infty (h(y)+ \beta \delta)  r_b(x, y) \diff y + \int_{-\infty}^b h(y) r_b(x,y) \diff y \\
&:=    \mathrm{e}^{\Phi(q) (x  - b)} \frac {\varphi(q)-\Phi(q)} {\delta \Phi(q)} \Big[  \int_0^\infty h(y+b)   \mathrm{e}^{- \varphi(q) y} \diff y +   \frac {\beta \delta}  {\varphi(q)} \Big]  \\ &+ \int_{-\infty}^0 h(y+b) \Big[  \mathrm{e}^{\Phi(q)(x-b)} \frac {\varphi(q)-\Phi(q)} {\Phi(q)} \int_0^\infty  \mathrm{e}^{-\varphi(q)z} W^{(q)\prime} (z-y) \diff z - W^{(q)}(x-b-y) \Big] \diff y,
\end{split} \\
\label{v_J_diff_b_star}
\begin{split}
v_b^{(2)}(x) &:=\int_{0}^\infty (h(y+b) + \beta \delta) \Big\{   \mathrm{e}^{-\varphi(q)y}M(x; b) - \mathbb{W}^{(q)}(x-b-y) \Big\} \diff y \\
&+ \delta \int_{-\infty}^{0} h(y+b) \Big\{ M(x; b) \int_{0}^\infty  \mathrm{e}^{-\varphi(q) z} W^{(q)\prime}(z-y) \diff z - \int_{b}^x \mathbb{W}^{(q)}(x-z) W^{(q)\prime} (z-b-y) \diff z \Big\} \diff y.
\end{split}
\end{align}

\begin{remark} \label{remark_finiteness_v_1_2}
In view of  \eqref{v_b_sum}, both $v_b^{(1)}$ and $v_b^{(2)}$ are finite and hence the decomposition is well-defined.  To see this notice that, by the estimation \eqref{uniformly_integrability_A}, $v_b^{(1)}(x)$  is finite for $x \leq b$. Moreover, in view of the form of $r_b^{(1)}$ in \eqref{r_expression}, $v_b^{(1)}(x)$ for $x > b$ is also finite. 
Indeed, for any $x > b$ and $y < b$,
\begin{align*}
r_b^{(1)} (x,y) = e^{\Phi(q) (x-b)}r_b^{(1)} (b,y) - W^{(q)} (x-y) + e^{\Phi(q)(x-b)} W^{(q)} (b-y).
\end{align*}
By the decomposition (95) of \cite{Kuznetsov2013}, $-W^{(q)} (x-y) + e^{\Phi(q)(x-b)} W^{(q)} (b-y)  =  \hat{u}^{(q)} (x-y) - e^{\Phi(q)(x-b)} \hat{u} (b-y)$ (where $\hat{u}^{(q)}$ is the $q$-resolvent density of $-X$ as in Theorem 2.4 (iv) of \cite{Kuznetsov2013}), with which $h$ is integrable by \eqref{uniformly_integrability_A}.
Finally, using  again  \eqref{uniformly_integrability_A}, we conclude that  $v_b^{(2)}(x)$ is finite as well, for $x > b$.
%\red{[So, I just realized that $r_b^{(1)}$ and $r_b^{(2)}$ may get negative; but I think it is ok from the above that the negative part of $r_b^{(1)}$ is integrable.]  
In addition,
\begin{align}
\int_{-\infty}^{\infty} |h(y)| |r_b^{(1)}(x, y)| \diff y < \infty. \label{h_r_abs_integrable1}
\end{align}
 Since $\int_{-\infty}^{\infty} |h(y)| (r_b^{(1)}+r^{(b)}_2)(x, y) \diff y  < \infty$ and  $r_b^{(1)} + r_b^{(2)} > 0$, we must also have 
\begin{align}
\int_{-\infty}^{\infty} |h(y)| |r_b^{(2)}(x, y)| \diff y<\infty. \label{h_r_abs_integrable2}
\end{align}
\end{remark}

%Because
%\begin{align*}
%e^{\varphi(q) b} \int_b^\infty e^{-\varphi(q) z} W^{(q)} (z-y) \diff z = \int_b^\infty e^{-\varphi(q) (z-b)} W^{(q)} (z-y) \diff z = \int_0^\infty e^{-\varphi(q) z} W^{(q)} (z+b-y) \diff z,
%\end{align*}
%we can write
%\begin{multline*}
%v_b(x)
%= \frac {\varphi(q)-\Phi(q)} {\delta \Phi(q)} \int_b^\infty (h(y)+ \delta)  e^{\Phi(q) x - \varphi(q) y - b(\Phi(q) - \varphi(q))} \diff y \\ + \int_{-\infty}^b h(y) \Big[ e^{\Phi(q)(x-b)} \frac {\varphi(q)-\Phi(q)} {\Phi(q)} \Big\{- W^{(q)}(b-y) + \varphi(q) \int_0^\infty e^{-\varphi(q) z} W^{(q)} (z+b-y) \diff z \Big\}- W^{(q)}(x-y) \Big] \diff y.
%\end{multline*}

\subsection{First order condition}  We shall first obtain our candidate refraction boundary $b^*$.  In view of \eqref{v_b_expression} and \eqref{v_J_diff_b_star},  (once it is confirmed that the derivatives $\partial / \partial b$ and $\partial / \partial x$  can be interchanged over the integrals) the next  identity holds:
\begin{align}
\frac \partial {\partial b} v_b (x) = u_b(x), \quad x \neq b. \label{identity_v_u}
\end{align}
where
\begin{align*}
u_b(x) := \E_x \Big[ \int_0^\infty  \mathrm{e}^{-qt} h'(U_t^b) \diff t \Big] - v_b'(x), \quad x \neq b.
\end{align*}
Since the first-order condition $\partial v_b (x) /{\partial b} |_{b=b^*} = 0$ is a necessary condition for the optimality of the refraction strategy $\ell^{b^*}$, we shall pursue $b^*$ such that $u_{b^*}(x)$ vanishes; consequently, (if such $b^*$ exists) $v_{b^*}'(x) =\E_x [ \int_0^\infty  \mathrm{e}^{-qt} h'(U_t^{b^*}) \diff t ]$. This identity will  be important  later in the verification of optimality.

Let us define, for  $k = 1,2$,
\begin{align}
u_b^{(k)}(x) &:=\int_{-\infty}^\infty h'(y) r_{b}^{(k)}(x,y) \diff y - v_b^{(k) \prime}(x), \quad x \neq b,\label{u_b_k_def}
\end{align}
so that
\begin{align} \label{u_b_sum}
u_b(x) = u_b^{(1)}(x) + u_b^{(2)}(x) \mathbf{1}_{\{x > b\}}, \quad x \neq b.
\end{align}
%Because these are simplified when $x < b$, we shall first choose $b$ such that $u_b^{(1)}(x)$ vanishes for $x < b$.

% by the first order condition.  In view of \eqref{v_b_sum}, the expression of $v_b$ is simplified when $x < b$.  Hence, we shall obtain the partial derivative $\partial v_b(x) / \partial b$ for $x < b$ and obtain $b^*$ such that it vanishes if such $b^*$ exists.

The next lemma shows that the right hand side of \eqref{u_b_k_def} can be written succinctly using the function:
\begin{align}  \label{def_I_new}
\begin{split}
I(b) &:=  \frac {\varphi(q)-\Phi(q)} {\varphi(q)} \int_0^\infty  h'(y+b)   \mathrm{e}^{- \varphi(q) y}\diff y   + \delta \Big[ \int_{-\infty}^0 h'(y+b) \int_0^\infty  \mathrm{e}^{- \varphi(q) z} \Theta^{(q)} (z-y) \diff z  \diff y -  \beta \frac {\Phi(q)} {\varphi(q)} \Big] \\
&= \frac {\varphi(q)-\Phi(q)} {\varphi(q)} \int_0^\infty  h'(y+b)   \mathrm{e}^{- \varphi(q) y}\diff y \\
&\quad + \frac \delta {\varphi(q)} \Big[ \int_{-\infty}^{0} h'(y+b) \Big\{ (\varphi(q)-\Phi(q))  \int_{0}^\infty  \mathrm{e}^{-\varphi(q) z} W^{(q)\prime}(z-y) \diff z - {\Phi(q)}   W^{(q)} (-y)   \Big\} \diff y
-\beta  \Phi(q)  \Big],
\end{split}
\end{align}
where the second equality holds because, by integration by parts,
\begin{align}
\int_0^\infty  \mathrm{e}^{-\varphi(q)z} W^{(q)\prime} (z-y) \diff z +W^{(q)}(-y) = \varphi(q) \int_0^\infty  \mathrm{e}^{-\varphi(q) z} W^{(q)} (z-y) \diff z, \quad y < 0. \label{some_result_int_by_parts}
\end{align}
 The proof of this result is technical and long (partly because we need to confirm that the derivative $\partial / \partial x$ can go into the integral over the unbounded set), and therefore we shall defer the proof to Appendix \ref{proof_lemma_u_b}.

%\begin{remark}
%Because
%\begin{align*}
%\int_0^\infty e^{-\varphi(q)z} W^{(q)\prime} (z-y) \diff z
%&=- W^{(q)}(-y) + \varphi(q) \int_0^\infty e^{-\varphi(q) z} W^{(q)} (z-y) \diff z,
%\end{align*}
%we can also write
%\begin{multline*}
%I(b) =  \frac {\varphi(q)-\Phi(q)} {\varphi(q)} \int_0^\infty  h'(y+b)  e^{- \varphi(q) y}\diff y \\
%+ \frac \delta {\varphi(q)} \Big[ \int_{-\infty}^{0} h'(y+b) \Big\{ (\varphi(q)-\Phi(q))  \int_{0}^\infty e^{-\varphi(q) z} W^{(q)\prime}(z-y) \diff z - {\Phi(q)}   W^{(q)} (-y)   \Big\} \diff y
%-\beta  \Phi(q)  \Big].
%\end{multline*}
%\end{remark}

\begin{lemma} \label{lemma_u_b}
We have
\begin{align}
u_b^{(1)}(x)&= \frac {\varphi(q) - \Phi(q)} {\delta \Phi(q)}   \mathrm{e}^{\Phi(q)(x-b)} I(b) - h(b)  W^{(q)}(x-b), \quad x \neq b,
 \label{v_b_x_derivative_b} \\
 u_b^{(2)}(x) &=  (M(x; b) - \mathbb{W}^{(q)} (x-b)) I(b) + {h(b)}  W^{(q)} (x-b), \quad x > b.  \label{v_b_x_derivative_b_2}
\end{align}
%\begin{align} \label{def_I}
%I(b)
%&:=  (\varphi(q)-\Phi(q)) \int_0^\infty  h(y+b)   \mathrm{e}^{- \varphi(q) y}  \diff y+  \delta \Big[ \int_{-\infty}^0 h(y+b) \Upsilon^{(q)}(-y) \diff y   -h(b)   W^{(q)}(0) - \frac {\Phi(q)} {\varphi(q)} \beta \Big].
%\end{align} \red{in fact this}
\end{lemma}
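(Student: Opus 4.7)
The plan is to derive both identities by (i) differentiating the explicit expressions \eqref{v_b_expression}--\eqref{v_J_diff_b_star} in $x$ under the integral sign, (ii) computing $\int_{-\infty}^\infty h'(y)\, r_b^{(k)}(x,y)\diff y$ by integrating by parts in $y$, and (iii) comparing. The underlying reason the calculation closes is that the $y$-dependence of $r_b^{(k)}(x,y)$ enters only through convolution-type kernels $W^{(q)}(x-y)$, $\mathbb{W}^{(q)}(x-y)$, and $W^{(q)\prime}(z-y)$, on which $\partial_x = -\partial_y$. The bulk integrals $\int h(y)\partial_x r_b^{(k)}\diff y$ and $\int h(y)\partial_y r_b^{(k)}\diff y$ therefore cancel against each other, leaving only (a) a boundary contribution at $y=b$ that produces the $\mp h(b) W^{(q)}(x-b)$ terms and (b) residual integrals tied to the non-convolution $x$-dependencies $\mathrm{e}^{\Phi(q)(x-b)}$ and $M(x;b)$, which after simplification reproduce $I(b)$ times the stated prefactor.

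More concretely, I would proceed in three substeps. First, justify the interchange of $\partial/\partial x$ with $\int_{-\infty}^\infty$ by dominating integrands on a neighborhood of $x$ using Assumption \ref{assump_h}, \eqref{finiteness_laplace_negagtive}, \eqref{eq_integrability1}, and the exponential growth bound $W^{(q)}(u), W^{(q)\prime}(u+) = O(\mathrm{e}^{\Phi(q)u})$ as $u\to\infty$ implicit in \eqref{scale_function_version}--\eqref{W_q_limit} and \eqref{def_Theta}; the analogue for $\mathbb{W}^{(q)}$ uses $\varphi(q)$ in place of $\Phi(q)$. Second, split $\int h'(y)r_b^{(k)}(x,y)\diff y$ at $y=b$ and integrate each piece by parts in $y$, noting that the $\pm\infty$ contributions vanish by the same tail bounds. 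The jump $r_b^{(k)}(x, b-) - r_b^{(k)}(x, b+)$ is evaluated using the Laplace transform identity $\int_0^\infty \mathrm{e}^{-\varphi(q)z}W^{(q)}(z)\diff z = 1/(\delta\varphi(q))$ (valid because $\psi(\varphi(q)) - q = \delta\varphi(q)$), together with \eqref{eq:Wqp0} and \eqref{some_result_int_by_parts}; once the $W^{(q)}(0)$ remainder is absorbed into the $\mathrm{e}^{\Phi(q)(x-b)}$ coefficient, this contributes precisely $-W^{(q)}(x-b)$ for $k=1$ and $+W^{(q)}(x-b)$ for $k=2$. Third, the residual non-convolution integrals are reorganized by a final integration by parts on $\int_0^\infty h(y+b)\mathrm{e}^{-\varphi(q)y}\diff y$ that converts $h$ into $h'$; the drift term $-\beta\Phi(q)/\varphi(q)$ in $I(b)$ arises from differentiating the $\beta\delta(\varphi(q)-\Phi(q))/(\Phi(q)\varphi(q))\cdot \mathrm{e}^{\Phi(q)(x-b)}$ contribution in $v_b^{(1)}$, while \eqref{some_result_int_by_parts} converts the surviving $W^{(q)\prime}$-integrals into the $\Theta^{(q)}$-form of $I(b)$ in \eqref{def_I_new}.

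The main obstacle I expect is the bookkeeping in the third substep: the residual integrals involve nested occurrences of $W^{(q)}$, $W^{(q)\prime}$, and (for $u_b^{(2)}$) $\mathbb{W}^{(q)}$, and the numerical coefficients must combine precisely to match the prefactors in \eqref{v_b_x_derivative_b}--\eqref{v_b_x_derivative_b_2}. The $u_b^{(2)}$ case is the more delicate of the two, because the identity $\partial_x M(x;b) = \Phi(q) M(x;b) + (\varphi(q)-\Phi(q))\mathbb{W}^{(q)}(x-b)$ produces an extra $\mathbb{W}^{(q)}(x-b)$ term that must combine with the boundary contributions from integration by parts to reproduce the clean form $M(x;b) - \mathbb{W}^{(q)}(x-b)$ appearing in \eqref{v_b_x_derivative_b_2}.
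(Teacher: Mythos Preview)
Your proposal is correct and follows essentially the same route as the paper's proof in Appendix \ref{proof_lemma_u_b}: differentiate $v_b^{(k)}$ under the integral sign (justified via the dominated-convergence argument you outline), integrate by parts in $y$ to pass from $h$ to $h'$ using the Laplace identity $\int_0^\infty \mathrm{e}^{-\varphi(q)z}W^{(q)}(z)\diff z=(\delta\varphi(q))^{-1}$ and the relation \eqref{some_result_int_by_parts}, and for $k=2$ exploit $M'(x;b)=\Phi(q)M(x;b)+(\varphi(q)-\Phi(q))\mathbb{W}^{(q)}(x-b)$ after splitting $v_b^{(2)}$ into its $y>b$ and $y<b$ pieces. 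Your conceptual observation that $\partial_x=-\partial_y$ on the convolution kernels is exactly why the bulk terms cancel, leaving the $\mp h(b)W^{(q)}(x-b)$ boundary contribution and the $I(b)$-residuals; the paper carries this out explicitly but the structure is identical.
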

 In view of  \eqref{u_b_sum}, this lemma directly implies the following.
\begin{proposition} \label{prop_derivative_all}
For all $x, b \in \R$ such that $x \neq b$,
\begin{align}
u_b(x)&= \Big[ \frac {\varphi(q) - \Phi(q)} {\delta \Phi(q)}   \mathrm{e}^{\Phi(q)(x-b)} + \mathbf{1}_{\{ x > b \}}  (M(x; b) - \mathbb{W}^{(q)} (x-b)) \Big]  I(b). \label{u_b_decomposition}
\end{align}
\end{proposition}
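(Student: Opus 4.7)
The proof is essentially a direct assembly from Lemma \ref{lemma_u_b}. My plan is to simply substitute the two formulas \eqref{v_b_x_derivative_b} and \eqref{v_b_x_derivative_b_2} into the decomposition \eqref{u_b_sum} and verify that the claimed expression falls out after a one-line cancellation.

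The plan is to split cases by the sign of $x-b$. First, for $x < b$ we have $u_b(x) = u_b^{(1)}(x)$ by \eqref{u_b_sum}, and since the scale function $W^{(q)}$ vanishes on the negative half-line we get $W^{(q)}(x-b) = 0$, so \eqref{v_b_x_derivative_b} immediately collapses to $u_b(x) = \frac{\varphi(q)-\Phi(q)}{\delta\Phi(q)} e^{\Phi(q)(x-b)} I(b)$, which matches \eqref{u_b_decomposition} because the indicator $\mathbf{1}_{\{x>b\}}$ kills the second bracketed term.

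Second, for $x > b$ I would add \eqref{v_b_x_derivative_b} and \eqref{v_b_x_derivative_b_2} directly. The crucial observation is that the two boundary terms $-h(b) W^{(q)}(x-b)$ coming from $u_b^{(1)}$ and $+h(b) W^{(q)}(x-b)$ coming from $u_b^{(2)}$ cancel exactly. What remains is
\begin{equation*}
u_b(x) \;=\; \frac{\varphi(q)-\Phi(q)}{\delta\Phi(q)} e^{\Phi(q)(x-b)} I(b) \;+\; \bigl(M(x;b) - \mathbb{W}^{(q)}(x-b)\bigr) I(b),
\end{equation*}
which factors as $I(b)$ times the bracket in \eqref{u_b_decomposition} with $\mathbf{1}_{\{x>b\}} = 1$.

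There is no genuine obstacle in this proof since all the analytic work (checking the interchange of derivative and integral, evaluating the resolvent densities, invoking the integration-by-parts identity \eqref{some_result_int_by_parts}) has already been absorbed into Lemma \ref{lemma_u_b}. The only thing worth highlighting in the writeup is the cancellation of the $h(b) W^{(q)}(x-b)$ terms, which is what makes the final expression factor so cleanly through $I(b)$ and therefore makes the first-order condition $I(b^*) = 0$ sufficient to kill $u_{b^*}$ identically on $\R \setminus \{b^*\}$—a fact that will drive the subsequent verification argument.
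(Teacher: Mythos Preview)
Your proposal is correct and takes essentially the same approach as the paper, which simply states that ``in view of \eqref{u_b_sum}, this lemma directly implies the following'' without spelling out the case split or the cancellation. You have merely made explicit the two-line computation (the vanishing of $W^{(q)}(x-b)$ for $x<b$ and the cancellation of the $\pm h(b)W^{(q)}(x-b)$ terms for $x>b$) that the paper leaves to the reader.
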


\subsection{Candidate value function} \label{subsection_candidate}

In view of our discussion in the previous subsection and Proposition \ref{prop_derivative_all}, the choice of our candidate threshold level $b^*$ is clear. In \eqref{u_b_decomposition}, the bracket term on the right hand side is uniformly positive for all $x \in \R$. Indeed, for $x > b$, it equals the resolvent density  $r_b^{(1)}(x,b) + r_b^{(2)}(x,b)$ at $b$ in view of  \eqref{r_expression}.  Hence, we shall pursue $b^*$ such that $I(b^*)$ vanishes.  However, the existence and uniqueness of such $b^*$ is not guaranteed, and hence we shall define $b^*$ carefully here.

First, by \eqref{def_Theta}, the first equality of \eqref{def_I_new} and the convexity of $h$, the function $I$ is \emph{nondecreasing}. 
%\begin{align*}
%I'(b) &=  \frac {\varphi(q)-\Phi(q)} {\varphi(q)} \int_0^\infty  h''(y+b)  e^{- \varphi(q) y}\diff y   + \delta \Big[ \int_{-\infty}^0 h''(y+b) \int_0^\infty e^{- \varphi(q) z} \Theta^{(q)} (z-y) \diff z  \diff y \Big] \geq 0.
%\end{align*}
Hence we can define the limits $I(\infty) := \lim_{b \uparrow \infty} I(b) \in (-\infty, \infty]$ and $I(-\infty) := \lim_{b \downarrow -\infty} I(b) \in [-\infty, \infty)$.
We set our candidate optimal threshold level $b^*$ to be \emph{any root} of $I(b) = 0$ if $I(-\infty) < 0 < I(\infty)$.  If $I(\infty) \leq 0$, we let $b^*=\infty$; if $I(-\infty) \geq 0$, we let $b^* = -\infty$. 

%Notice that for any $b \in \R$, $I'(b) = 0$ if and only if $h'$ is a constant (or $h$ is affine); otherwise, $b^*$ (if $-\infty < b^* < \infty$) becomes the \emph{unique} root of $I(b)=0$; see Example \ref{example_linear} below.

In fact, if $h$ is affine, then $I$ is a constant.  Conversely, if $h$ is not affine, $I$ is strictly increasing everywhere. To see the latter, suppose $h'$ is not constant and fix any $b \in \R$. Then, for sufficiently small $-M$, we must have $\int_{-M}^\infty h''(y+b) \diff y + \sum_{-M < y < \infty} \Delta h'(y+b) e^{-\varphi(q) y} > 0$ with $\Delta h'(y) := h'(y+) - h'(y-)$.
% or there exist $y$ such that $\Delta h'(y) > 0$ on $(-M, \infty)$.  
 Then, because $h'$ is nondecreasing,
\begin{align*} 
I'(b) &\geq  \frac {\varphi(q)-\Phi(q)} {\varphi(q)} \Big( \int_0^\infty  h''(y+b)   \mathrm{e}^{- \varphi(q) y}\diff y  + \sum_{0 \leq y < \infty} \Delta h'(y+b) e^{-\varphi(q) y}\Big) \\ &+ \delta  \Big( \int_{-M}^0 h''(y+b) \int_0^\infty  \mathrm{e}^{- \varphi(q) z} \Theta^{(q)} (z-y) \diff z  \diff y + \sum_{-M < y < 0}  \Delta h'(y+b) \int_0^\infty  \mathrm{e}^{- \varphi(q) z} \Theta^{(q)} (z-y) \diff z  \Big)  \\ &> 0.
\end{align*}
Therefore,  for the affine case with $I \equiv 0$  (which holds if and only if $h'(b) = q \beta$ uniformly in $b \in \R$ by Lemma \ref{lemma_classification} below), we set $b^*$ to be any value on $\R$.
Otherwise, $b^*$ (if $-\infty < b^* < \infty$) becomes the \emph{unique} root of $I(b)=0$.

%\begin{remark} \label{remark_I_zero_case}
%For the case $I(-\infty) = I(\infty) = 0$ (which holds if and only if $h'(b) = q \beta$ uniformly in $b \in \R$ by Lemma \ref{lemma_classification} below), we shall let $b^* = \infty$.  However, in this special case with $I(-\infty) = I(\infty) = 0$, the arguments below hold for any choice of $b^* = [-\infty, \infty]$. \end{remark}

%In order to give a specific case where $b^*$ is finite and explicit, we present an example with the quadratic cost function.

\begin{example} \label{example_quadratic}For the case $h(y) := \alpha y^2$, $y \in \R$, for some $\alpha > 0$,
$b^*= \beta q /(2\alpha)+\mathbb{E}(-\underline{X}_{\EQ})-{\varphi(q)}^{-1}$. %\red{removed the proof}
\end{example}

For the cases $b^* = \infty$ and $b^*= -\infty$, we shall set our candidate value functions to be $v_{\infty}(x) := \limsup_{b \uparrow \infty} v_b(x)$ and $v_{-\infty} (x) := \liminf_{b \downarrow -\infty} v_b(x)$, respectively.
%; note that the limit $\lim_{b \rightarrow \infty} v_b(x)$ in fact exists by \eqref{v_b_x_derivative_b} and $\sup_{b \in \R}I(b) \leq 0$.
By \eqref{uniformly_integrability_A},  if we set $\tau_b^+ := \inf \{ t > 0: X_t > b\}$ and  $\kappa_b^- := \inf \{ t > 0: Y_t < b\}$, the estimation \eqref{uniformly_integrability_A} immediately gives
$\E_x [\int_{\tau^+_b}^\infty  \mathrm{e}^{-qt} |h (U_t^b)| \diff t ] \xrightarrow{b \uparrow \infty} 0$ and  $\E_x [\int_{\kappa^-_b}^\infty  \mathrm{e}^{-qt} |h (U_t^b)| \diff t ] \xrightarrow{b \downarrow -\infty} 0$.
Hence, we have probabilistic representations:\begin{align} \label{v_infty_by_expectation}
v_\infty(x)  = \E_x \Big[ \int_0^{\infty}  \mathrm{e}^{-qt} h(X_t) \diff t \Big] \quad \textrm{and} \quad v_{-\infty}(x)  = \E_x \Big[ \int_0^{\infty}  \mathrm{e}^{-qt} h(Y_t) \diff t \Big] + \frac {\beta \delta} q,
\end{align}
which are the NPVs corresponding to the strategies $\ell^\infty \equiv 0$ and $\ell^{-\infty} \equiv \delta$.
By the resolvent measures for $X$ and $Y$, we can write \eqref{v_infty_by_expectation} succinctly using the scale function: by Corollary 8.9 of \cite{Kyprianou_2006},
\begin{align}  \label{v_infty_analytical}
\begin{split}
v_\infty(x)  &= \frac {1} {\psi'(\Phi(q))}  \int_0^\infty h(x+z)  \mathrm{e}^{-\Phi(q)z} \diff z + \int_{-\infty}^0 h(x+z) \hat{u}^{(q)} (-z) \diff z, \\
v_{-\infty}(x)  &= \frac {1} {\psi'(\varphi(q))-\delta}  \int_0^\infty h(x+z)  \mathrm{e}^{-\varphi(q)z} \diff z + \int_{-\infty}^0 h(x+z) \tilde{u}^{(q)} (-z) \diff z + \frac {\beta \delta} q,
\end{split}
\end{align}
where $\hat{u}^{(q)} (w) := \frac {1} {\psi'(\Phi(q))}   \mathrm{e}^{\Phi(q)w} - W^{(q)}(w)$ and $\tilde{u}^{(q)} (w) := \frac {1} {\psi'(\varphi(q)) - \delta}   \mathrm{e}^{\varphi(q)w} - \mathbb{W}^{(q)}(w)$, for all $w \geq 0$.

The criteria for $b^* = \infty$ and $b^* = -\infty$ can be obtained concisely below. Let $h'(\infty) :=\lim_{y \rightarrow \infty} h'(y) \in (-\infty, \infty]$ and $h'(-\infty) :=\lim_{y \rightarrow -\infty} h'(y)  \in [-\infty, \infty)$ (which exist by the convexity of $h$).
\begin{lemma} \label{lemma_classification} We have
\begin{align*}
I(\infty)  = \frac {\delta \Phi(q)} {\varphi(q)} \Big(  \frac {h'(\infty)} q
     -  \beta   \Big) \quad \textrm{and} \quad I(-\infty)  = \frac {\delta \Phi(q)} {\varphi(q)} \Big(  \frac {h'(-\infty)} q
     -  \beta   \Big).
  \end{align*}
  Hence, $I(-\infty) = I(\infty) = 0$ if and only if $h'(b) = q\beta$ for all $b \in \R$
  % (by Remark \ref{remark_I_zero_case}, we set $b^*=\infty$). 
  Otherwise, $b^* = \infty$ if and only if $h'(\infty) \leq q \beta$ and $b^* = -\infty$ if and only if $h'(-\infty) \geq q \beta$.
\end{lemma}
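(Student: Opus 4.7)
The plan is to compute $I(\infty)$ and $I(-\infty)$ by passing $b \to \pm \infty$ under the two integrals defining $I$ in \eqref{def_I_new}, after which the classification of $b^*$ can be read off directly. All the substantive analytic content reduces to evaluating one auxiliary double integral with the help of the identities collected in \eqref{expressions_theta}.

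The key auxiliary quantity is
\[
J := \int_{-\infty}^{0} \int_0^\infty \mathrm{e}^{-\varphi(q) z} \Theta^{(q)}(z-y)\,\diff z\,\diff y.
\]
I would evaluate $J$ by Fubini (all integrands are non-negative), substituting $u = z-y$ in the inner integral to obtain $\int_z^\infty \Theta^{(q)}(u)\,\diff u$, and then swapping order once more to arrive at $J = \varphi(q)^{-1} \int_0^\infty \Theta^{(q)}(u)\bigl(1 - \mathrm{e}^{-\varphi(q) u}\bigr)\,\diff u$. The first and third identities in \eqref{expressions_theta} then give $J = \varphi(q)^{-1}\bigl[\Phi(q)/q - (\varphi(q)-\Phi(q))/(\delta\varphi(q))\bigr]$, with the two $W^{(q)}(0)$ contributions cancelling.

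Next I would justify passing the limit $b \to \infty$ under the integral signs in \eqref{def_I_new}. Since $h$ is convex, $h'$ is nondecreasing, so $b \mapsto h'(y+b)$ is monotone for every fixed $y$. Writing $h' = h'_+ - h'_-$ and fixing a reference $b_0$, monotone convergence handles $h'_+$, while dominated convergence handles $h'_-$ (using $h'_-(\cdot+b) \leq h'_-(\cdot+b_0)$ for $b \geq b_0$). The required integrability of the dominating functions against $\mathrm{e}^{-\varphi(q)y}\,\diff y$ on $(0,\infty)$ and against the measure $\bigl(\int_0^\infty \mathrm{e}^{-\varphi(q)z}\Theta^{(q)}(z-y)\,\diff z\bigr)\,\diff y$ on $(-\infty,0)$ follows from the polynomial growth of $h'$ (Assumption \ref{assump_h}) together with \eqref{finiteness_laplace_negagtive}, which supplies finiteness of all polynomial moments of $\underline{X}_{\EQ}$. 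Passing to the limit, the first integral in \eqref{def_I_new} contributes $h'(\infty)(\varphi(q)-\Phi(q))/\varphi(q)^2$ and the second contributes $\delta h'(\infty)J$; using the value of $J$, the two $(\varphi(q)-\Phi(q))$-terms cancel algebraically, leaving $I(\infty) = (\delta\Phi(q)/\varphi(q))(h'(\infty)/q - \beta)$. The treatment of $I(-\infty)$ is identical with the roles of monotone and dominated convergence exchanged between $h'_+$ and $h'_-$, since $h'(y+b)$ decreases as $b \downarrow -\infty$.

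The second assertion is then immediate: because $\delta\Phi(q)/\varphi(q) > 0$, the formulas give $I(\pm\infty) = 0$ iff $h'(\pm\infty) = q\beta$, and the monotonicity of $h'$ forces $h' \equiv q\beta$ when both endpoints vanish. Similarly the conditions $I(\infty) \leq 0$ and $I(-\infty) \geq 0$ that define $b^* = \infty$ and $b^* = -\infty$ translate directly into $h'(\infty) \leq q\beta$ and $h'(-\infty) \geq q\beta$. The only genuine obstacle is the limit interchange together with the exact algebraic cancellation of the $(\varphi(q)-\Phi(q))$ terms; with $J$ in hand the cancellation is forced, and the $h'_{\pm}$ decomposition reduces the interchange to a routine MCT/DCT argument.
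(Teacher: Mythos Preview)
Your proposal is correct and follows essentially the same approach as the paper: both compute the auxiliary double integral $J$, pass the limit $b\to\pm\infty$ through the integrals in \eqref{def_I_new} (the paper simply invokes monotone convergence, relying implicitly on the integrable lower bound $h'(\cdot+b_0)$, whereas you make this explicit via the $h'_\pm$ decomposition), and then read off the algebraic cancellation. Your evaluation of $J$ via the identities \eqref{expressions_theta} is a minor streamlining of the paper's route through $\p\{-\underline{X}_{\EQ}>z\}$ and the Wiener--Hopf formula, but the substance is identical.
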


\begin{proof}
By monotone convergence applied to the first equality of \eqref{def_I_new},
\begin{align*}
I(\infty)  &= h'(\infty)
\Big[ \frac {\varphi(q)-\Phi(q)} {\varphi(q)^2}  + \delta  \int_{-\infty}^0  \int_0^\infty  \mathrm{e}^{- \varphi(q) z} \Theta^{(q)} (z-y) \diff z  \diff y \Big] -  \beta  \delta \frac {\Phi(q)} {\varphi(q)}.
  \end{align*}
Hence, when $h'(\infty) = \infty$, $I(\infty) = \infty$.

Suppose $h'(\infty) < \infty$. By Fubini's theorem,  integration by parts and Remark  \ref{remark_wiener_hopf} (1), we have \begin{align*}
&\int_{-\infty}^0  \int_0^\infty  \mathrm{e}^{- \varphi(q) z} \Theta^{(q)} (z-y) \diff z  \diff y =   \int_0^\infty  \mathrm{e}^{- \varphi(q) z} \int_{-\infty}^0 \Theta^{(q)} (z-y)  \diff y  \diff z \\
%=   \frac {\Phi(q)} q \int_0^\infty  \mathrm{e}^{- \varphi(q) z} \int_{-\infty}^0 \p \{ - \underline{X}_{e_q} \in \diff (-y) + z \}  \diff y  \diff z \\
&=   \frac {\Phi(q)} q \int_0^\infty  \mathrm{e}^{- \varphi(q) z}\p \{ - \underline{X}_{\EQ} > z \}  \diff z
={  \frac {\Phi(q)} {q } \Big[ \int_{(0,\infty)} \p\{ - \underline{X}_{\EQ}\in \diff z\}\int_0^z e^{-\varphi(q)w} \diff w  \Big] }\\
&= {\frac {\Phi(q)} {q } \Big[ \int_{(0,\infty)} \p\{ - \underline{X}_{\EQ}\in \diff z\}\frac{1-e^{-\varphi(q)z}}{\varphi(q)} \Big]}
=   \frac {\Phi(q)} {q \varphi(q)} \Big[1 - \int_{[0, \infty)}  \mathrm{e}^{- \varphi(q) z}\p \{ -\underline{X}_{\EQ} \in \diff z \}  \Big] \\
&= \frac {\Phi(q)} {q \varphi(q)} (1- \E  \mathrm{e}^{\varphi(q) \underline{X}_{\EQ}} )
%&=  \frac {\Phi(q)} {q \varphi(q)} (1-\E  \mathrm{e}^{\varphi(q) X_{\EQ}} /\E  \mathrm{e}^{\varphi(q) (X_{\EQ} - \underline{X}_{\EQ})}) \\
= \frac {\Phi(q)} {q\varphi(q)} -  \frac {\varphi(q)-\Phi(q)} {\varphi(q)^2 \delta},
\end{align*}
where the last equality holds by  \eqref{expressions_theta}.
%where the last equality holds because
%\begin{align*}
% \E  \mathrm{e}^{\varphi(q) X_{\EQ} } &= \int_0^\infty q   \mathrm{e}^{t (\psi(\varphi(q))-q)} = \frac q {\psi(\varphi(q))-q} = \frac q{\varphi(q) \delta}, \\  
% \E  \mathrm{e}^{\varphi(q) (X_{\EQ} - \underline{X}_{\EQ})} &= \E  \mathrm{e}^{-\varphi(q) \overline{X}_{\EQ}} = \frac 1 {\varphi(q)-\Phi(q)}.
%\end{align*}
Substituting this, we have the expression for $I(\infty)$. The proof for $I(-\infty)$ is similar.
\end{proof}

The following example is a direct consequence of Lemma \ref{lemma_classification}.
\begin{example} \label{example_linear}For the case $h(y) := \alpha y+\eta$, $y \in \R$, for some $\alpha, \eta \in \R$, we have $b^*=-\infty$ when $\alpha/q > \beta$,  $b^* = \infty$ when $\alpha/q > \beta$, and any value on $[-\infty, \infty]$ when $\alpha/q = \beta$.
\end{example}

Using the obtained $b^* \in [-\infty, \infty]$, our candidate optimal strategy is $\ell^{b^*}$ with its expected NPV given by $v_{b^*}$. For the case $b^* \in (-\infty, \infty)$, by our choice that $u_{b^*}(x) = 0$, we have
\begin{align}
v_{b^*}'(x) = \E_x \Big[ \int_0^\infty  \mathrm{e}^{-qt} h'(U_t^{b^*}) \diff t \Big], \quad x \in \R, \label{v_prime_equals_expectation}
\end{align}
where the differentiability at $b^*$ holds because the right hand side is continuous at $x=b^*$. 

 In view of  \eqref{v_infty_analytical}, the relation \eqref{v_prime_equals_expectation} also holds for $b^* = -\infty$ and $b^* = \infty$. To see how the derivative can go into the integral in \eqref{v_infty_analytical}, by the convexity of $h$, we can choose a sufficiently small $-m$ such that $h(x+z)$ is of the same sign for all $z < -m-x$. Hence, the convexity of $h$ implies by monotone convergence that
\begin{align*}
 \int_{-\infty}^{-m}  \frac {|h(x+z) - h(x+\epsilon + z)|} \epsilon \hat{u}^{(q)} (-z) \diff z \xrightarrow{\epsilon \rightarrow 0} \int_{-\infty}^{-m}  |h'(x+z)| \hat{u}^{(q)} (-z) \diff z;
 \end{align*}
 the same can be said when $\hat{u}^{(q)}$ is replaced with $\tilde{u}^{(q)}$.

\section{Verification of optimality} \label{section_optimality}
We shall now show the optimality of the strategy $\ell^{b^*}$ and  the associated expected NPV of total costs $v_{b^*}$. Toward this end, we shall first obtain a sufficient condition of optimality (Lemma \ref{verificationlemma} below) and then show that  $v_{b^*}$ satisfies it.
\subsection{Verification lemma}
For the given spectrally negative L\'evy process $X$, we call a function $f$ \emph{sufficiently smooth} if $f$ is continuously differentiable (resp.\ twice continuously differentiable) on $\mathbb{R}$ when $X$ has paths of bounded (resp.\ unbounded) variation.
We let $\Gamma$ be the operator acting on sufficiently smooth functions $f$, defined by
\begin{equation*}
\begin{split}
\Gamma f(x) := \gamma f'(x)+\frac{\sigma^2}{2}f''(x) +\int_{(-\infty,0)}[f(x+z)-f(x)-f'(x)z\mathbf{1}_{\{-1<z <0\}}]\nu(\mathrm{d}z),
\end{split}
\end{equation*}
and let $(\Gamma-q) f(x) := \Gamma f(x) - q f(x)$.
If, for some $\ell \in \Pi_\delta$,  $v_{\ell}$ is sufficiently smooth, then $\Gamma v_{\ell}$  is well-defined.  In addition, it is finite by \eqref{uniformly_integrability_A} and Assumptions \ref{assump_levy_measure} and \ref{assump_h}.
%Let us now give a verification lemma:
\begin{lemma}[Verification lemma]
\label{verificationlemma}
Suppose a strategy $\hat{{\ell}} \in \Pi_\delta$ is such that $v_{\hat{{\ell}}}$ is sufficiently smooth on $\mathbb{R}$ and satisfies\begin{equation}
\label{equiv_inequality}
\begin{cases}
(\Gamma-q) v_{\hat{{\ell}}}(x)+h(x) \geq 0 & \text{if $ v_{\hat{{\ell}}}'(x)\leq \beta$}, \\
(\Gamma-q) v_{\hat{{\ell}}}(x) -\delta (v_{\hat{{\ell}}}'(x) - \beta) +h(x)\geq 0 & \text{if $v_{\hat{{\ell}}}'(x)>\beta$}.
\end{cases}
\end{equation}
Then $\hat{{\ell}}$ is an optimal strategy and $v(x) = v_{\hat{{\ell}}}(x)$ for all $x \in \R$.
\end{lemma}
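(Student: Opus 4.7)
The plan is to use a standard verification argument based on Itô's formula, with a localization procedure, and then invoke the uniform integrability estimate \eqref{uniformly_integrability_A} to pass to the limit. Concretely, fix an arbitrary $\ell \in \Pi_\delta$ and apply the Itô–Meyer formula to $e^{-qt} v_{\hat{\ell}}(U_t^\ell)$; since $v_{\hat{\ell}}$ is sufficiently smooth and $U^\ell_t = X_t - \int_0^t \ell_s \diff s$ has only an absolutely continuous added drift relative to $X$, this produces
\begin{equation*}
e^{-qt} v_{\hat{\ell}}(U_t^\ell) = v_{\hat{\ell}}(x) + \int_0^t e^{-qs}\bigl[(\Gamma - q) v_{\hat{\ell}}(U_s^\ell) - \ell_s v_{\hat{\ell}}'(U_s^\ell)\bigr] \diff s + M_t^\ell,
\end{equation*}
where $M^\ell$ is a local martingale started at zero.

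The next step is a pointwise comparison of the integrand above with $-(h(U_s^\ell) + \beta \ell_s)$. On $\{v_{\hat{\ell}}'(U_s^\ell) \leq \beta\}$, the first branch of \eqref{equiv_inequality} together with $\ell_s \geq 0$ gives
\begin{equation*}
(\Gamma - q) v_{\hat{\ell}}(U_s^\ell) - \ell_s v_{\hat{\ell}}'(U_s^\ell) + h(U_s^\ell) + \beta \ell_s \geq (\beta - v_{\hat{\ell}}'(U_s^\ell)) \ell_s \geq 0.
\end{equation*}
On $\{v_{\hat{\ell}}'(U_s^\ell) > \beta\}$, the second branch and $\ell_s \leq \delta$ yield
\begin{equation*}
(\Gamma - q) v_{\hat{\ell}}(U_s^\ell) - \ell_s v_{\hat{\ell}}'(U_s^\ell) + h(U_s^\ell) + \beta \ell_s \geq (v_{\hat{\ell}}'(U_s^\ell) - \beta)(\delta - \ell_s) \geq 0.
\end{equation*}
Thus, in both regimes, the drift in the Itô expansion dominates $-e^{-qs}(h(U_s^\ell) + \beta \ell_s)$. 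Rearranging,
\begin{equation*}
v_{\hat{\ell}}(x) \leq e^{-qt} v_{\hat{\ell}}(U_t^\ell) + \int_0^t e^{-qs}(h(U_s^\ell) + \beta \ell_s) \diff s - M_t^\ell.
\end{equation*}

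To conclude I localize: choose a sequence of stopping times $T_n \uparrow \infty$ reducing $M^\ell$, replace $t$ by $t \wedge T_n$ and take expectations, so that $M^\ell$ disappears. Letting first $n \to \infty$ and then $t \to \infty$, the integral term converges to $\E_x [\int_0^\infty e^{-qs}(h(U_s^\ell) + \beta\ell_s)\diff s] = v_\ell(x)$, with the interchange of limit and expectation justified by \eqref{uniformly_integrability_A}. It remains to show that $\E_x[e^{-q t} v_{\hat{\ell}}(U_{t}^\ell)] \to 0$ as $t \to \infty$; this is the main obstacle. The key observation is that $v_{\hat{\ell}}$ itself inherits polynomial growth from $h$ via Assumption \ref{assump_h} and the sandwich $Y_t \leq U_t^{\hat{\ell}} \leq X_t$, so that $|v_{\hat{\ell}}(y)| \leq C(1 + |y|^N)$ for some constants; then, using the independence of an exponential random variable with rate $q$ and the finiteness in \eqref{laplase_x_eq_minus_theta}, the quantity $\E_x[e^{-qt} v_{\hat{\ell}}(U_{t}^\ell)]$ is bounded by the tail of the integral in \eqref{uniformly_integrability_A} and vanishes. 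This yields $v_{\hat{\ell}}(x) \leq v_\ell(x)$ for every $\ell \in \Pi_\delta$; since $\hat{\ell} \in \Pi_\delta$, the reverse inequality is automatic, and the optimality of $\hat{\ell}$ with $v = v_{\hat{\ell}}$ follows.
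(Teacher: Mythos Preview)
Your argument follows essentially the same route as the paper: apply It\^o's formula to $e^{-qt}v_{\hat\ell}(U_t^\ell)$, use the two-case inequality \eqref{equiv_inequality} pointwise (the paper repackages it as $\inf_{0\le r\le\delta}\{(\Gamma-q)v_{\hat\ell}-r(v_{\hat\ell}'-\beta)\}+h\ge 0$, which is equivalent), localize, take expectations, and pass to the limit via \eqref{uniformly_integrability_A}.

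The one place where your write-up is looser than the paper is the terminal term. You take $T_n$ to be an \emph{arbitrary} reducing sequence for $M^\ell$; the paper instead uses the concrete choice $T_n=\inf\{t>0:|U^\ell_t|>n\}$, which keeps $U^\ell$ bounded on $[0,T_n]$ and makes all intermediate expectations manifestly finite. With your generic $T_n$ you still need to justify (i) that $\E_x[e^{-q(t\wedge T_n)}v_{\hat\ell}(U^\ell_{t\wedge T_n})]\to \E_x[e^{-qt}v_{\hat\ell}(U^\ell_t)]$ as $n\to\infty$, and (ii) the polynomial-growth bound $|v_{\hat\ell}(y)|\le C(1+|y|^N)$, both of which you assert rather than prove. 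The paper sidesteps this entirely: instead of bounding $|v_{\hat\ell}|$ by a polynomial, it uses the very definition of $v_{\hat\ell}$ together with \eqref{h_upper}--\eqref{h_lower} and the strong Markov property of $X$ and $Y$ to dominate $e^{-q(t\wedge T_n)}v_{\hat\ell}(U^\ell_{t\wedge T_n})$ directly by the tail integral $\int_{t\wedge T_n}^\infty e^{-qs}(|h(X_s)|+|h(Y_s)|+|\underline h|\mathbf 1_{\{\underline h>-\infty\}}+\beta\delta)\,\diff s$, whose expectation vanishes by \eqref{uniformly_integrability_A}. Your route is fixable (take $T_n$ to be the paper's exit times, or supply a supremum-moment bound), but as written the limit passage for the terminal term is a small gap.
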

\begin{proof}
We first note that \eqref{equiv_inequality} is equivalent to the condition
\begin{equation}
\label{HJB-inequality}
\inf_{0\leq r\leq\delta} \{ (\Gamma -q) v_{\hat{{\ell}}}(x) -r (v'_{\hat{{\ell}}}(x)-\beta ) \} +h(x)\geq 0.
\end{equation}
Hence, we assume \eqref{HJB-inequality}. For brevity, we write $w:=v_{\hat{{\ell}}}$ throughout the proof.

Fix any admissible strategy ${\ell} \in \Pi_\delta$, and let $(T_n)_{n\in\mathbb{N}}$ be a sequence of stopping times defined by $T_n:=\inf\{t>0:|{U}^{\ell}_t|>n\}$.
Since ${U}^{\ell} =X - L$ is a semi-martingale and $w$ is sufficiently smooth by assumption, we can use the change of variables/It\^o's formula (cf.\ \cite{MR2273672}, Theorems II.31 and II.32) to the stopped process $\{\mathrm{e}^{-q(t\wedge T_n)}w({U}^{\ell}_{t\wedge T_n}); t \geq 0 \}$ and deduce under $\mathbb{P}_x$ that
\begin{multline*}
%\label{impulse_verif_1}
\mathrm{e}^{-q(t\wedge T_n)}w({U}^{\ell}_{t\wedge T_n})-w(x)
=  -\int_{0}^{t\wedge T_n}\mathrm{e}^{-qs} q w({U}^{\ell}_{s-}) \mathrm{d}s
+\int_{0}^{t\wedge T_n}\mathrm{e}^{-qs}w'({U}^{\ell}_{s-}) \mathrm{d}  ( X_s - L_s  )  \\
  + \frac  {\sigma^2} 2 \int_0^{t \wedge T_n}  \mathrm{e}^{-qs} w''(U_{s-}^{\ell}) \diff s + \sum_{0<s\leq t\wedge T_n}\mathrm{e}^{-qs}[\Delta w({U}^{\ell}_{s-}+\Delta X_s)-w'({U}^{\ell}_{s-})  \Delta X_s  ],
\end{multline*}
where $\Delta \zeta_s:=\zeta_s-\zeta_{s-}$ and $\Delta w(\zeta_s):=w(\zeta_s)-w(\zeta_{s-})$ for any right continuous process $\zeta$.
Rewriting the above equation leads to
\begin{align*}
\mathrm{e}^{-q(t\wedge T_n)}w({U}^{\ell}_{t\wedge T_n})  -w(x) = \int_{0}^{t\wedge T_n}\mathrm{e}^{-qs}   (\Gamma-q)w({U}^{\ell}_{s-})   \mathrm{d}s
 -\int_{0}^{t\wedge T_n}\mathrm{e}^{-qs}w'({U}^{\ell}_{s-})\mathrm{d}{L}_s + M_{t \wedge T_n}
  \end{align*}
 with
 \begin{align*}
 M_u &:= \int_0^u \sigma  \mathrm{e}^{-qs} w'(U_{s-}^{{\ell}}) \diff B_s +\lim_{\varepsilon\downarrow 0}\int_{(0,u]} \int_{(-1,-\varepsilon)}  \mathrm{e}^{-qs}w'(U_{s-}^{{\ell}})y (N(\diff s\times \diff y)-\nu(\diff y) \diff s)\\
 &+\int_{(0,u]} \int_{(-\infty,0)} \mathrm{e}^{-qs}(w(U_{s-}^{\ell}+y)-w(U_{s-}^{\ell})-w'(U_{s-}^{\ell})y\mathbf{1}_{\{y\in(-1,0)\}})(N(\diff s\times \diff y)-\nu(\diff y) \diff s), \quad u \geq 0,
 \end{align*}
  where $\left\{B_s; s \geq 0 \right\}$ is a standard Brownian motion and $N$ is a Poisson random measure in   the measure space  $([0,\infty)\times(-\infty,0),\B [0,\infty)\times \B (-\infty,0), \diff s \times\nu( \diff x))$.
%\begin{equation*}
%\begin{split}
%\mathrm{e}^{-q(t\wedge T_n)}w({U}^\pi_{t\wedge T_n}) & -w(x)
%\\
%= &  \int_{0+}^{t\wedge T_n}\mathrm{e}^{-qs}   (\Gamma-q)w({U}^\pi_{s-})   \mathrm{d}s
% -\int_{0+}^{t\wedge T_n}\mathrm{e}^{-qs}w'({U}^\pi_{s-})\mathrm{d}{L}^\pi_s   \\
%\end{split}
%\end{equation*}
%\begin{equation*}
%\begin{split}
% & + \Bigg\{ \int_{0+}^{t\wedge T_n}\mathrm{e}^{-qs}w'({U}^\pi_{s-})\mathrm{d}  \Big[ X_s-\Big( \gamma-\int_0^1 z\nu(\mathrm{d}z) \red{infinite}\Big) s-\sum_{0<u\leq s}\Delta X_u\mathbf{1}_{\{ |\Delta X_u| >1\}} \Big]   \Bigg\} \\
% & + \Bigg\{ \sum_{0<s\leq t\wedge T_n}\mathrm{e}^{-qs}[\Delta w({U}^\pi_{s-}+\Delta X_s)-w'({U}^\pi_{s-})\Delta X_s\mathbf{1}_{\{ |\Delta X_s| \leq1\}} ]\\
% & -   \int_{0+}^{t\wedge T_n}\int_{0+}^{\infty}\mathrm{e}^{-qs}\left[w({U}^\pi_{s-}+y)-w({U}^\pi_{s-})-w'({U}^\pi_{s-})y\mathbf{1}_{\{-1<y<0\}}\right]\nu(\mathrm{d}y)\mathrm{d}s \Bigg\}.
%\end{split}
%\end{equation*}
By the compensation formula (cf.\ \cite{Kyprianou_2006}, Corollary 4.6), $\{M_{t \wedge T_n}; t \geq 0 \}$ is a zero-mean martingale.

Now we write
\begin{multline*}
%\label{first_inequality}
w(x) =
 -\int_{0}^{t\wedge T_n}\mathrm{e}^{-qs}  \left[ (\Gamma-q)w({U}^{\ell}_{s-})-\ell_s (w'({U}^{\ell}_{s-})- \beta)\right]  \mathrm{d}s  \\
  + \beta \int_{0}^{t\wedge T_n}\mathrm{e}^{-qs} \ell_s\mathrm{d}s + \mathrm{e}^{-q(t\wedge T_n)}w({U}^{\ell}_{t\wedge T_n}) - M_{t \wedge T_n}.
\end{multline*}
%where $\{M_t:t\geq0\}$ is a zero-mean $\mathbb{P}_x$-martingale.
By \eqref{HJB-inequality}, after taking expectation, $w(x) \leq
 \E_x [ \int_{0}^{t\wedge T_n}\mathrm{e}^{-qs}(h({U}^{\ell}_{s})+\beta \ell_s)\mathrm{d}s ] +   \E_x [\mathrm{e}^{-q(t\wedge T_n)}w({U}^{\ell}_{t\wedge T_n}) ]$.
 
By \eqref{uniformly_integrability_A},  $\lim_{t, n \uparrow \infty}\E_x [ \int_{0}^{t\wedge T_n}\mathrm{e}^{-qs}h({U}^{\ell}_{s})\mathrm{d}s] = \E_x [ \int_{0}^{\infty}\mathrm{e}^{-qs}h({U}^{\ell}_{s})\mathrm{d}s]$ by dominated convergence.  On the other hand, because $\ell$ is nonnegative, we also have $\lim_{t, n \uparrow \infty} \E_x [ \int_{0}^{t\wedge T_n}\mathrm{e}^{-qs}\beta \ell_s\mathrm{d}s ] = \E_x [ \int_{0}^{\infty}\mathrm{e}^{-qs}\beta \ell_s\mathrm{d}s ]$ via monotone convergence.
Finally, by $0\leq \ell_s\leq\delta$, \eqref{h_upper}, \eqref{h_lower}
and the strong Markov property of $X$ and $Y$, $\E [\mathrm{e}^{-q(t\wedge T_n)}w({U}^{\ell}_{t\wedge T_n}) ] \leq \E [ \int_{t \wedge T_n}^\infty  \mathrm{e}^{-qs} (|h (X_s)|+ |h (Y_s)|+|\underline{h}| \mathbf{1}_{\{\underline{h} > -\infty\}} + \beta \delta ) \diff s] $.
Therefore, Fatou's lemma and \eqref{uniformly_integrability_A} give $\limsup_{t, n \uparrow \infty} \E_x [\mathrm{e}^{-q(t\wedge T_n)}w({U}^{\ell}_{t\wedge T_n}) ] \leq 0$.

Hence, $w(x) \leq v_{\ell} (x)$.  Since ${\ell} \in \Pi_\delta$ was chosen  arbitrarily, we have that $w(x) \leq v(x)$.
On the other hand, we also have the reverse inequality,  $w(x) \geq v(x)$, because $w$ is attained by an admissible strategy $\hat{{\ell}}$.  This completes the proof.

%Now taking expectations and
%letting $t$ and $n$ go to infinity and using the monotone convergence theorem we get, noting that $T_n\nearrow\infty$ $\mathbb{P}_x$-a.s., \red{[$h$ may not be nonnegative]}
%\begin{equation*}
%w(x) \leq \mathbb{E}_x \left[\int_{0}^{\infty}\mathrm{e}^{-qs}h({U}^\pi_{s})\mathrm{d}s+ \red{\beta}\int_{0}^{\infty}\mathrm{e}^{-qs}\ell^\pi(s)\mathrm{d}s \right] =v_\pi(x).
%\end{equation*}
%Hence we proved $w(x)\leq v_*(x)$ for all $x\in\mathbb{R}$.
\end{proof}

\subsection{Optimality of $v_{b^*}$}

In view of Lemmas \ref{verificationlemma}, it suffices to show that the function $v_{b^*}$ is sufficiently smooth and satisfies \eqref{equiv_inequality}. The former can be verified by  using the form of $v_{b^*}'$ as in \eqref{v_prime_equals_expectation} and considering its derivative; we defer the proof to Appendix \ref{proof_lemma_sufficiently_smooth}.
\begin{lemma} \label{lemma_sufficiently_smooth}The function $v_{b^*}$ is sufficiently smooth.
\end{lemma}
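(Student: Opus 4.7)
The plan is to leverage the probabilistic identity
\[
v_{b^*}'(x) = \E_x \Big[\int_0^\infty \mathrm{e}^{-qt} h'(U_t^{b^*}) \diff t \Big]
\]
established in \eqref{v_prime_equals_expectation} for every $x \in \R$. By construction of $b^*$, Proposition \ref{prop_derivative_all} gives $u_{b^*}(x)=0$ for $x\neq b^*$, and continuity of the conditional expectation at $x=b^*$ extends the representation to all of $\R$. This already settles the bounded variation case, where sufficient smoothness means $v_{b^*}\in C^1(\R)$: the right-hand side is continuous in $x$ by the Feller property of the refracted \lev process $U^{b^*}$ combined with the tail control \eqref{uniformly_integrability_A} (which dominates the infinite-horizon time integral) and the polynomial growth of $h'$ inherited from the convexity of $h$ and Assumption \ref{assump_h}.

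For the unbounded variation case, where sufficient smoothness means $v_{b^*}\in C^2(\R)$, I would express $v_{b^*}'$ explicitly via the resolvent density \eqref{resolvent_measure}--\eqref{r_expression}. In each of the regions $\{x<b^*\}$ and $\{x>b^*\}$, this density is a finite combination of integrals involving $W^{(q)}$ and $\mathbb{W}^{(q)}$; since $Y$ is of unbounded variation whenever $X$ is, Remark \ref{remark_smoothness_zero} gives that both scale functions belong to $C^1(\R\setminus\{0\})$. Differentiation under the integral sign in $x$ is justified by the same tail estimate \eqref{uniformly_integrability_A} together with the polynomial growth of $h'$, producing a closed-form expression for $v_{b^*}''$ on $\R\setminus\{b^*\}$ that is continuous on each side by the smoothness of the scale functions on $(0,\infty)$.

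The main obstacle is verifying that $v_{b^*}''(b^*-) = v_{b^*}''(b^*+)$. The two one-sided expressions differ by boundary terms that arise from the indicator $\mathbf{1}_{\{x>b^*\}}$ in \eqref{resolvent_measure} and from evaluating $W^{(q)}, \mathbb{W}^{(q)}$ and their derivatives at $0$. In the unbounded variation case one has $W^{(q)}(0)=\mathbb{W}^{(q)}(0)=0$, which eliminates the leading $\mathcal{O}(1)$ discontinuity; the remaining contributions from differentiating $M(x;b^*)$, from the convolution $\int_{b^*}^x \mathbb{W}^{(q)}(x-z) W^{(q)\prime}(z-y) \diff z$, and from the indicator can be grouped, using integration by parts and the identity \eqref{some_result_int_by_parts}, into a single term proportional to $I(b^*)$, in the same spirit as the rearrangement that produced \eqref{u_b_decomposition}. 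The defining property $I(b^*)=0$ then kills the residual jump and yields the matching of second derivatives.

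For the degenerate cases $b^*=\pm\infty$ and the affine case (where $b^*$ may be chosen arbitrarily), smoothness is immediate from the closed-form expressions \eqref{v_infty_analytical}, since the densities $\hat u^{(q)}$ and $\tilde u^{(q)}$ inherit the regularity of $W^{(q)}$ and $\mathbb{W}^{(q)}$ on $(0,\infty)$ and the exponential factors are analytic; convexity of $h$ and Assumption \ref{assump_h} again legitimize the necessary differentiation under the integral, as was already used for \eqref{v_prime_equals_expectation} in the case $b^*=\pm\infty$.
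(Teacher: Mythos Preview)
Your overall strategy coincides with the paper's: use \eqref{v_prime_equals_expectation} to get $C^1$, then write $v_{b^*}'(x)=\int h'(y)(r_{b^*}^{(1)}(x,y)+r_{b^*}^{(2)}(x,y)\mathbf{1}_{\{x>b^*\}})\diff y$ via the refracted resolvent and differentiate once more for the unbounded variation case. Where you diverge is the mechanism you propose for matching the one-sided second derivatives at $b^*$, and that mechanism is not correct for the representation you say you are using.

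Once you have passed to $v_{b^*}'(x)=\E_x[\int_0^\infty e^{-qt}h'(U_t^{b^*})\diff t]$, the condition $I(b^*)=0$ has already been spent; nothing further proportional to $I(b^*)$ appears. Writing $v_{b^*}'(x)=w^{(1)}(x)+w^{(2)}(x)\mathbf{1}_{\{x>b^*\}}$ with $w^{(k)}(x)=\int h'(y)r_{b^*}^{(k)}(x,y)\diff y$, the paper shows directly that $\lim_{x\downarrow b^*}w^{(2)\prime}(x)=0$. This is purely structural: in the unbounded variation case $\mathbb{W}^{(q)}(0)=0$, hence $M(b^*;b^*)=0$ and $M'(b^*;b^*)=(\varphi(q)-\Phi(q))\mathbb{W}^{(q)}(0)+\Phi(q)M(b^*;b^*)=0$; and every remaining contribution to $w^{(2)\prime}(x)$ is an integral over $[b^*,x]$ against $\mathbb{W}^{(q)\prime}(x-\cdot)$ times a quantity that is bounded near $b^*$, so it is dominated by a constant times $\mathbb{W}^{(q)}(x-b^*)\to 0$. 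No grouping into $I(b^*)$ occurs, and if you try to produce one you will not find it---the terms simply vanish individually. (Your intuition would be right if you differentiated the original resolvent expression for $v_{b^*}$ in terms of $h$ twice, bypassing \eqref{v_prime_equals_expectation}; there the jump of $v_b''$ at $b$ is $\mathbb{W}^{(q)\prime}(0+)I(b)$. But that is a different computation from the one you describe.)

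The rest of your outline---justifying differentiation under the integral via \eqref{uniformly_integrability_A} and the polynomial growth of $h'$, and handling $b^*=\pm\infty$ through \eqref{v_infty_analytical}---matches the paper.
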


We shall now verify the inequalities \eqref{equiv_inequality}.  Toward this end, we restate it in the following lemma.
%The next lemma provides conditions to verify the optimality of the strategy with value function $v_{b^*}$.
\begin{lemma}
\label{tussenlemma} The inequalities \eqref{equiv_inequality} for $v_{\hat{{\ell}}} = v_{b^*}$ hold if and only if
\begin{equation}
\label{equiv_inequality2}
\begin{cases}
v_{b^*}'(x)\geq \beta & \text{if $x> b^*$}, \\
v_{b^*}'(x)\leq \beta & \text{if $x \leq b^*$}.
\end{cases}
\end{equation}
\end{lemma}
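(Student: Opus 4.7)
The plan is to reduce the lemma to the two piecewise partial integro-differential equations (PIDEs) satisfied by $v_{b^*}$ on either side of the refraction boundary, after which the equivalence is a short algebraic check. Using the strong Markov property of the refracted \lev process $U^{b^*}$ (which evolves as $X$ below $b^*$ and as $Y_t = X_t - \delta t$ above $b^*$) together with the sufficient smoothness of $v_{b^*}$ established in Lemma \ref{lemma_sufficiently_smooth}, a standard It\^o/Dynkin argument applied to $\mathrm{e}^{-qt}v_{b^*}(U_t^{b^*})$ up to the first exit from a small neighborhood lying strictly in one of the two regions yields
\begin{align*}
(\Gamma-q)v_{b^*}(x)+h(x)&=0, \qquad x<b^*, \\
(\Gamma-q)v_{b^*}(x)-\delta(v_{b^*}'(x)-\beta)+h(x)&=0, \qquad x>b^*,
\end{align*}
the second line absorbing the running control cost $\beta\delta$ (due to $\ell\equiv\delta$ above $b^*$) into the $-\delta(v_{b^*}'-\beta)$ drift correction.

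Substituting these identities, the two left-hand sides appearing in \eqref{equiv_inequality} collapse on $\R\setminus\{b^*\}$ to
\[
(\Gamma-q)v_{b^*}(x)+h(x)=\delta(v_{b^*}'(x)-\beta)\mathbf{1}_{\{x>b^*\}}, \qquad (\Gamma-q)v_{b^*}(x)-\delta(v_{b^*}'(x)-\beta)+h(x)=-\delta(v_{b^*}'(x)-\beta)\mathbf{1}_{\{x\leq b^*\}}.
\]
Thus \eqref{equiv_inequality} asks exactly that (i) whenever $v_{b^*}'(x)\leq\beta$, one has $(v_{b^*}'(x)-\beta)\mathbf{1}_{\{x>b^*\}}\geq 0$, which on $\{x>b^*\}$ forces $v_{b^*}'(x)\geq\beta$; and (ii) whenever $v_{b^*}'(x)>\beta$, one has $-(v_{b^*}'(x)-\beta)\mathbf{1}_{\{x\leq b^*\}}\geq 0$, which on $\{x\leq b^*\}$ forces $v_{b^*}'(x)\leq\beta$. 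These are precisely the two bullets of \eqref{equiv_inequality2}. The reverse direction is immediate: under \eqref{equiv_inequality2}, both right-hand sides above are nonnegative in the relevant case, so \eqref{equiv_inequality} holds. The singleton $\{b^*\}$ is absorbed by the continuity of $v_{b^*}'$ granted by Lemma \ref{lemma_sufficiently_smooth}, which in fact pins down $v_{b^*}'(b^*)=\beta$.

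For the degenerate regimes $b^*=\pm\infty$, only one of the two PIDEs is active on all of $\R$ (corresponding to $\ell\equiv 0$ giving $v_\infty$, or $\ell\equiv\delta$ giving $v_{-\infty}$), and \eqref{equiv_inequality2} collapses to one global inequality ($v_\infty'\leq\beta$ everywhere, or $v_{-\infty}'\geq\beta$ everywhere); the same case check applies verbatim. The main technical obstacle is the clean derivation of the two PIDEs at precisely the smoothness level provided by Lemma \ref{lemma_sufficiently_smooth}; once these are in hand, the remainder is pure sign bookkeeping on $v_{b^*}'(x)-\beta$. An alternative, more direct route is to combine the explicit resolvent-density representation \eqref{resolvent_measure}--\eqref{r_expression} with the fact that the $q$-resolvents of $X$ and of $Y$ invert $\Gamma-q$ and $\Gamma-\delta\partial_x-q$ respectively, and verify the two PIDEs by plugging into \eqref{v_b_expression}--\eqref{v_J_diff_b_star}.
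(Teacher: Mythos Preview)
Your proposal is correct and follows essentially the same approach as the paper: first establish the two piecewise generator identities \eqref{generators} via a Dynkin/martingale argument (the paper does this by showing $M_t:= \mathrm{e}^{-q(t\wedge\kappa)}v_{b^*}(Y_{t\wedge\kappa})+\int_0^{t\wedge\kappa}\mathrm{e}^{-qs}(h(Y_s)+\beta\delta)\diff s$ is a martingale via the strong Markov property of $U^{b^*}$ and then reading off the PIDE from It\^o), and then reduce \eqref{equiv_inequality} to a sign check on $v_{b^*}'(x)-\beta$. Your algebraic rewriting of the two generator expressions as $\delta(v_{b^*}'-\beta)\mathbf{1}_{\{x>b^*\}}$ and $-\delta(v_{b^*}'-\beta)\mathbf{1}_{\{x\leq b^*\}}$ is a slightly slicker packaging of the same case analysis the paper carries out.
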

\begin{proof}[\textbf{Proof}]
(i) We shall first prove that the following equalities hold for $v_{b^*}$:
\begin{equation}
\label{generators}
\begin{split}
(\Gamma-q) v_{b^*}(x)+h(x)=0  & \quad \text{for $x\leq b^*$},\\
(\Gamma-q) v_{b^*}(x) -\delta (v_{b^*}'(x)-\beta)+h(x)  =0 & \quad  \text{for $x>b^*$}.
\end{split}
\end{equation}
%By (\ref{vf1}) we have that the value function is given for $x\leq b^*$ by:
%\begin{equation}
%v_{b^*}(x)=\frac{ \mathrm{e}^{\Phi(q)(x-b^*)}}{\Phi(q)}\left[h(b^*)W^{(q)}(0)+\red{\beta}+\int_{-\infty}^0h(y+b^*)W^{(q)\prime}(-y)dy\right]-\int_{-\infty}^xh(y)W^{(q)}(x-y)dy.\notag
%\end{equation}
%So if we denote by $\tau_{b^*}=\inf\{t>0:X_t>b^*\}$ we know that
%\[
% \mathrm{e}^{-q(\tau_{b^*}\wedge t)} \mathrm{e}^{-\Phi(q)X_{\tau_{b^*}\wedge t}}\qquad\text{$t\geq0$,}
%\]
%is a $\mathbb{P}_x$-martingale, this implies by Lemma 4 of \cite{Avram_et_al_2007} that
%\[
%(\Gamma-q) \mathrm{e}^{\Phi(q)(x-b^*)}=0.
%\]
%On the other hand by Lemma 4.5 of \cite{Egami-Yamazaki-2010-1},
%\[
%(\Gamma-q)\int_{-\infty}^xh(y)W^{(q)}(x-y)dy=h(x).
%\]
Here we give a proof of the second equality of \eqref{generators};  for the first equality a similar (and even simpler) argument holds.   We also focus on the case $b^* \in (-\infty, \infty)$; the proof for the cases $b^* =-\infty$ and $b^*=\infty$ can be obtained by modifying the hitting time $\kappa$ defined below.

Fix $x > b^*$ and recall \eqref{def_Y} for the definition of $Y$.   Define the stochastic process
\begin{equation*}
M_t:= \mathrm{e}^{-q(t\wedge\kappa)}v_{b^*}(Y_{t\wedge\kappa})+\int_0^{t\wedge\kappa} \mathrm{e}^{-qs}(h(Y_s)+\beta\delta) \diff s, \quad t\geq 0,
\end{equation*}
where  $\kappa :=\inf\{t>0:Y_t \notin [b^*, N] \}$ with $N > x$.
%Then it is clear that $\kappa_{b^*}^-$ coincides with the hitting time $\inf\{t>0:U_t^{b^*}<b^*\}$ $\p_x$-a.s. and $\{Y_t,0\leq t\leq \kappa_{b^*}^-\}$ equals $\{U^{b^*}_t,0\leq t\leq T_{b^*}^-\}$.

%Let $\mathbf{P}_x$ be the law of $Y$ under which $Y_0 = x$ and $\mathbf{E}_x$ the expectation operator.
In order to show the second equality of \eqref{generators}, it suffices to show that $M$ is a $\p_x$-martingale.   Indeed, it holds by
the martingale property and Ito's formula (thanks to Lemma \ref{lemma_sufficiently_smooth}, which guarantees that  $v_{b^*}$ is sufficiently smooth) noticing that the generator of $Y$ is given by $\Gamma_Y f := \Gamma f - \delta f'$.

%here $\tau_{b^*}^-$ stands for $\tau_{b^*}^-=\inf\{t>0:Y_t<b^*\}$ and $\mathbf{P}_x$ is the law of $Y$ when $Y_0 = x$.
For $x>{b^*}$ and $t > 0$, the strong Markov property of $Y$ gives
\begin{equation*}
\begin{split}
\E_x & \left.\left[ \mathrm{e}^{-q\kappa}v_{b^*}(Y_\kappa)+\int_0^\kappa  \mathrm{e}^{-qs}(h(Y_s)+ \beta \delta) \diff s \right| \mathcal{F}_t \right] \\
%= &  \mathbf{1}_{\{t<\kappa\}} \E_x   \left.\left[ \mathrm{e}^{-q\kappa}v_{b^*}(Y_\kappa)+\int_0^\kappa  \mathrm{e}^{-qs}(h(Y_s)+\beta \delta) \diff s \right| \mathcal{F}_t \right] + \mathbf{1}_{\{t\geq \kappa \}} M_t\\
= & \mathbf{1}_{\{t<\kappa\}}\left\{\mathrm{e}^{-qt}\E_{Y_t} \left[ \mathrm{e}^{-q\kappa}v_{b^*}(Y_{\kappa})+\int_0^{\kappa} \mathrm{e}^{-qs}(h(Y_s)+\beta\delta)\diff s \right]+\int_0^t \mathrm{e}^{-qs}(h(Y_s)+\beta\delta) \diff s \right\}
   + \mathbf{1}_{\{t\geq\kappa\}} M_t.
%\mathbf{1}_{\{t\geq\tau_{b^*}^-\}} \mathrm{e}^{-q\tau_{b^*}^-} \left( v_{b^*}(Y_{\tau_{b^*}^-})-\frac{\delta}{q}  \right)
\end{split}
\end{equation*}
%Here $\mathbf{E}_x$ denotes expectation with respect to $\mathbf{P}_x$.
Because $U_s^{b^*} = Y_s$ for all $0 \leq s \leq \kappa$, $\p_x$-a.s.,
\begin{equation*}
\begin{split}
\mathbf{1}_{\{t<\kappa \}} M_t %= & \mathbf{1}_{\{t<\kappa\}}\left\{\mathrm{e}^{-qt} \mathbb{E}_{Y_t} \left[\int_0^{\infty} \mathrm{e}^{-qs}(h(U_s^{b^*})+\beta \delta\mathbf{1}_{\{U^{b^*}_s>b^*\}}) \mathrm{d}s\right]+\int_0^t \mathrm{e}^{-qs}(h(Y_s)+\beta \delta)\diff s\right\} \\
%= & \mathbf{1}_{\{t<\kappa \}}\bigg\{\mathrm{e}^{-qt} \mathbb{E}_{Y_t} \left[ \int_{\kappa}^{\infty} \mathrm{e}^{-qs}(h(U_s^{b^*})+\beta\delta\mathbf{1}_{\{U^{b^*}_s>b^*\}}) \mathrm{d}s+\int_0^{\kappa}e^{-qs}(h(Y_s)+\beta\delta) \mathrm{d}s\right] \\
%& \qquad +\int_0^te^{-qs}(h(Y_s)+\beta\delta)\diff s\bigg\}\\
= & \mathbf{1}_{\{t<\kappa\}}\left\{\mathrm{e}^{-qt}\E_{Y_t} \left[ \mathrm{e}^{-q\kappa} v_{b^*}(Y_{\kappa})+\int_0^{\kappa} \mathrm{e}^{-qs}(h(Y_s)+\beta\delta) \mathrm{d}s\right]+\int_0^t \mathrm{e}^{-qs}(h(Y_s)+\beta \delta)\diff s\right\}.
\end{split}
\end{equation*}
Putting the pieces together, we conclude that  $M_t = \E_x \left.\left[ \mathrm{e}^{-q\kappa}v_{b^*}(Y_\kappa)+\int_0^\kappa  \mathrm{e}^{-qs}(h(Y_s)+ \beta \delta) \diff s \right| \mathcal{F}_t \right]$, $t \geq 0$,
which is a martingale.

(ii) Using the identities \eqref{generators}, we shall now complete the proof of this lemma.

Suppose first that  \eqref{equiv_inequality2} holds.  If $v_{b^*}'(x)<\beta$, then \eqref{equiv_inequality2} implies $x\leq b^*$ and so, by \eqref{generators}, $(\Gamma-q) v_{b^*}(x)+h(x)=0$. If $v_{b^*}'(x)\geq \beta$, then \eqref{equiv_inequality2} implies $x> b^*$ and so, by \eqref{generators}, $(\Gamma-q) v_{b^*}(x) -\delta (v_{b^*}'(x)- \beta) + h(x)=0$. If $v_{b^*}'(x)=\beta$, then we have, by \eqref{generators}, $(\Gamma-q) v_{b^*}(x)+h(x)=0$. Hence \eqref{equiv_inequality} holds with $v_{\hat{\ell}} = v_{b^*}$.

To conclude the proof, suppose now that \eqref{equiv_inequality} holds with $v_{\hat{\ell}} = v_{b^*}$. For the case $x\leq b^*$, suppose for contradiction that $v_{b^*}'(x)> \beta$. Then   \eqref{equiv_inequality} and \eqref{generators} give $-\delta (v_{b^*}'(x)- \beta) \geq0$ which implies $v_{b^*}'(x)\leq \beta$ and hence forms a contradiction. Therefore we deduce  $v_{b^*}'(x)\leq \beta$. Similarly, for the case $x>b^*$,  suppose $v_{b^*}'(x)< \beta$. Then   \eqref{equiv_inequality} and \eqref{generators} give $\delta (v_{b^*}'(x)- \beta )\geq0$ which implies $v_{b^*}'(x)\geq \beta$ and hence forms a contradiction. Therefore we deduce  $v_{b^*}'(x)\geq \beta$.

%\red{about the case $b^* =-\infty, \infty$}
\end{proof}

\begin{lemma} \label{lemma_convexity_v}
The function $v_{b^*}$  is convex.
\end{lemma}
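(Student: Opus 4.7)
The plan is to derive convexity of $v_{b^*}$ from the probabilistic representation
\[
v_{b^*}'(x) = \E_x \Bigl[ \int_0^\infty \mathrm{e}^{-qt} h'(U_t^{b^*}) \diff t \Bigr]
\]
established in \eqref{v_prime_equals_expectation}, by showing that $x \mapsto v_{b^*}'(x)$ is nondecreasing. The main idea is a pathwise coupling that exploits the convexity of $h$ together with a comparison principle for the refracted SDE.

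For $b^* \in (-\infty, \infty)$, I would first construct, on a single probability space driven by one copy of $X$, the family $\{U^{b^*, x} : x \in \R\}$ of refracted spectrally negative L\'evy processes, each being the unique strong solution of $\diff U_t = \diff X_t - \delta \mathbf{1}_{\{U_t > b^*\}} \diff t$ with $U_0 = x$ (cf.\ Theorem 1 of \cite{kyprianou2010refracted}). The next step is the pathwise monotonicity claim: for $x_1 \leq x_2$, we have $U_t^{b^*, x_1} \leq U_t^{b^*, x_2}$ for every $t \geq 0$, $\p$-a.s. Since the two solutions share the same driver, they also share the same jumps, so the difference $\Delta_t := U_t^{b^*, x_2} - U_t^{b^*, x_1}$ is continuous and satisfies $\Delta_t = (x_2 - x_1) + \delta \int_0^t (\mathbf{1}_{\{U_s^{b^*, x_1} > b^*\}} - \mathbf{1}_{\{U_s^{b^*, x_2} > b^*\}}) \diff s$. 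If $\Delta$ hit zero at some first time $t_0$, then $U^{b^*, x_1}_{t_0} = U^{b^*, x_2}_{t_0}$, and the strong uniqueness of the refracted SDE forces the two solutions to coincide on $[t_0, \infty)$; consequently $\Delta \geq 0$ on all of $[0,\infty)$.

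With the ordering in hand, the convexity of $h$ implies that $h'$ is nondecreasing, whence $h'(U_t^{b^*, x_1}) \leq h'(U_t^{b^*, x_2})$ pointwise for every $t$. Taking expectations in the representation above yields $v_{b^*}'(x_1) \leq v_{b^*}'(x_2)$, which is equivalent to convexity of $v_{b^*}$. For the boundary cases $b^* = \pm\infty$, a direct argument suffices: by \eqref{v_infty_by_expectation} the NPVs are of the form $\E[\int_0^\infty \mathrm{e}^{-qt} h(x + \xi_t) \diff t]$, plus an additive constant when $b^* = -\infty$, where $\xi$ denotes $X$ or $Y$ started at $0$. The convexity of $h$ is inherited by this expression in $x$ by Fubini (with integrability guaranteed by \eqref{uniformly_integrability_A}).

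The main obstacle is the pathwise comparison for the refracted SDE, since the drift $\delta \mathbf{1}_{\{\cdot > b^*\}}$ is discontinuous at the threshold and classical ODE comparison theorems do not apply directly. The argument instead hinges on the fact that the difference $\Delta$ has no jump part (both processes being driven by the same $X$), together with the strong uniqueness from \cite{kyprianou2010refracted} used to identify the two processes after any possible crossing time.
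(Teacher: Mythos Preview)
Your proposal is correct and follows the same overall strategy as the paper: reduce convexity to monotonicity of $x\mapsto v_{b^*}'(x)$ via the representation \eqref{v_prime_equals_expectation}, and establish the latter by a pathwise comparison $U_t^{b^*,x_1}\le U_t^{b^*,x_2}$ for $x_1\le x_2$. The treatment of the degenerate cases $b^*=\pm\infty$ is essentially identical.

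The genuine difference is in how the comparison is proved for $b^*\in\R$. The paper splits into cases: for bounded variation it runs an explicit induction over the successive crossing times of the threshold by either process, checking in each of the three possible configurations that the ordering is preserved; for unbounded variation it invokes the approximation of the refracted process by bounded-variation refracted processes (Lemma~12 of \cite{kyprianou2010refracted}) and passes to the limit. Your argument is more economical and treats both cases at once: since the two solutions share the same driver $X$, the difference $\Delta_t$ is continuous, so its first zero (if any) is attained, and at that instant strong uniqueness of the refracted SDE (Theorem~1 of \cite{kyprianou2010refracted}, together with the strong Markov property of $X$ to restart the equation at the stopping time) forces the two trajectories to merge. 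This avoids the case analysis and the approximation step entirely. The paper's route, on the other hand, is more self-contained in that it never appeals to the uniqueness statement after time zero, only to the explicit pathwise construction.
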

\begin{proof}
By \eqref{v_prime_equals_expectation}, it is sufficient to show that the mapping $x \mapsto \E_x \left[ \int_0^\infty  \mathrm{e}^{-qt} h'(U_t^{b^*}) \diff t\right]$ is nondecreasing.

(i) Suppose that $-\infty < b^* < \infty$.    Notice that the law of $U_t^{b^*}$ under $\p_x$ is the same as $U_t^{b^*,x}$ under $\p$ where $U_t^{b^*,x}$ is the unique solution to the SDE: $U_t^{b^*,x} = x+ X_t - \delta \int_0^t \mathbf{1}_{\{U_s^{b^*,x} > b^* \}} \diff s$.
By the convexity of $h$  its derivative $h'$ is nondecreasing, and hence it is sufficient to show that, for any fixed $y > x$, $U_t^{b^*,y} \geq U_t^{b^*,x}$ $\p$-a.s. for all $t \geq 0$.  %Here, we show for the case $y > x > b^*$; by modifying the arguments slightly similar results hold for $b^* > y > x$. \red{should do for both.}

First, let us  suppose that $X$ is of bounded variation. We define a sequence of increasing stopping times $0 < \tau_1 < \tau_2 < \cdots$ representing the times at which either $U_t^{b^*,y}$ or $U_t^{b^*,x}$ crosses the level $b^*$.  For example, if $y > x > b^*$, then $\tau_1 = \inf \{t > 0: U_t^{b^*,x} < b^*\}$ and
$\tau_2 = \inf \{t > \tau_1: U_t^{b^*,y} < b^* \textrm{ or } U_t^{b^*,x} > b^*\}$.

By induction, we shall show that $U_t^{b^*, y} \geq U_t^{b^*, x}$ for any $t \in [\tau_{n-1}, \tau_{n}]$ for each $n \geq 0$ with $\tau_0 = \tau_{-1} := 0$ by convention. The base case ($n=0$) is already clear, because $U_{\tau_0}^{b^*, y} = y > x = U_{\tau_0}^{b^*, x}$.

%for every $t \in [0, \tau_1]$, $U_t^{b^*, y} = X_t + y - \delta t \geq X_t + x - \delta t = U_t^{b^*, x}$.

 Now suppose that the hypothesis holds for some $n \geq 0$; this implies $U_{\tau_n}^{b^*, y} \geq U_{\tau_n}^{b^*, x}$.
 Then, there are three possible scenarios (1) $U_{\tau_n}^{b^*, y} \geq U_{\tau_n}^{b^*, x} \geq b^*$, (2) $U_{\tau_n}^{b^*, y} \geq b^* \geq U_{\tau_n}^{b^*, x}$, and (3) $b^* \geq U_{\tau_n}^{b^*, y} \geq U_{\tau_n}^{b^*, x}$.

 For the case (1), it is necessary that  $U_t^{b^*, y} = U_{\tau_n}^{b^*, y} + (X_t - X_{\tau_n})  - \delta (t - \tau_n) \geq U_{\tau_n}^{b^*, x} + (X_t - X_{\tau_n})  - \delta (t - \tau_n) = U_t^{b^*, x}$ for $\tau_n \leq t \leq \tau_{n+1} = \inf \{ t > \tau_n: U_{t}^{b^*, x}  < b^* \}$.

 For the case (2),  we have $\tau_{n+1} := \overline{\tau}_{n+1}\wedge \underline{\tau}_{n+1}$,
where $\overline{\tau}_{n+1} := \inf \{t > \tau_n: U_t^{b^*,y} < b^*\}$ and $\underline{\tau}_{n+1} := \inf \{t > \tau_n: U_t^{b^*,x} > b^*\}$.

On the event $\{\overline{\tau}_{n+1} < \underline{\tau}_{n+1} \}$,  $U_{t}^{b^*,y} \geq b^* \geq U_{t}^{b^*,x}$ for all $t \in [\tau_n, \tau_{n+1})$ and $U_{\tau_{n+1}}^{b^*,y} = U_{\tau_{n+1}-}^{b^*,y} + \Delta X_{\tau_{n+1}} \geq  U_{\tau_{n+1}-}^{b^*,x} + \Delta X_{\tau_{n+1}} = U_{\tau_{n+1}}^{b^*,x}$.
On the event $\{\overline{\tau}_{n+1} > \underline{\tau}_{n+1} \}$, $U_{t}^{b^*,y} \geq b^* \geq U_{t}^{b^*,x}$ for all $t \in [\tau_{n}, \tau_{n+1}]$.

 For the case (3), we have  $U_t^{b^*, y} = U_{\tau_n}^{b^*, y} + (X_t - X_{\tau_n})   \geq U_{\tau_n}^{b^*, x} + (X_t - X_{\tau_n}) = U_t^{b^*, x}$ for $\tau_n \leq t \leq \tau_{n+1} = \inf \{ t > \tau_n: U_{\tau_n}^{b^*, y}  > b^* \}$.

 Hence for all cases $U_t^{b^*, y} \geq U_t^{b^*, x}$  for all $t \in (\tau_n, \tau_{n+1}]$. By mathematical induction, the inequality holds for all $t  \geq 0$.

 For the case of unbounded variation, recall, as in Lemma 12 of \cite{kyprianou2010refracted},  that there is a sequence of refracted \lev processes of bounded variation that converges a.s.\ to the desired  refracted \lev process.  Hence, we can obtain the same inequality $U_t^{b^*,y} \geq U_t^{b^*,x}$ $\p$-a.s. by taking the limit.

 (ii) Suppose $b^*=\infty$ or $b^*=-\infty$.
In view of \eqref{v_infty_by_expectation}, because $v_\infty(x) = \E [\int_0^\infty  \mathrm{e}^{-qt} h(X_t+x) \diff t ]$, the convexity of $h$ gives $v_\infty'(x) = \E [\int_0^\infty  \mathrm{e}^{-qt} h'(X_t+x) \diff t ] = \E_x [\int_0^\infty  \mathrm{e}^{-qt} h'(X_t) \diff t ]$. Similarly, $v_{-\infty}'(x) = \E _x [\int_0^\infty  \mathrm{e}^{-qt} h'(Y_t) \diff t ]$. The convexity of $h$ now shows that  $x \mapsto \E_x \left[ \int_0^\infty  \mathrm{e}^{-qt} h'(U_t^{b^*}) \diff t\right]$ is nondecreasing.
\end{proof}

\begin{proposition} \label{prop_second_condition}
The function $v_{b^*}$ satisfies \eqref{equiv_inequality}.
\end{proposition}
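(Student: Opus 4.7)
The plan is to combine the slope characterization of Lemma~\ref{tussenlemma} with the convexity established in Lemma~\ref{lemma_convexity_v} and the defining equation $I(b^*)=0$. By Lemma~\ref{tussenlemma} it suffices to verify the slope condition \eqref{equiv_inequality2}, and by Lemma~\ref{lemma_convexity_v} the derivative $v_{b^*}'$ is nondecreasing. Hence, in the finite case $b^*\in(-\infty,\infty)$, \eqref{equiv_inequality2} reduces to the single assertion $v_{b^*}'(b^*)=\beta$, and in the cases $b^*=\pm\infty$ it reduces to a uniform one-sided bound on $v_{b^*}'$.

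For the finite case, I would exploit identity \eqref{v_prime_equals_expectation} (which, by the continuity of its right-hand side noted right after that display, holds also at $x=b^*$). Evaluating at $x=b^*$ and using the resolvent density \eqref{resolvent_measure}--\eqref{r_expression}, where the indicator $\mathbf{1}_{\{x>b^*\}}$ vanishes, gives
\[
v_{b^*}'(b^*) \;=\; \int_{-\infty}^\infty h'(y)\, r_{b^*}^{(1)}(b^*,y)\,\diff y.
\]
The key observation is that, after the changes of variables $y\mapsto y+b^*$ and $z\mapsto z+b^*$ in the inner integral defining $r_{b^*}^{(1)}(b^*,y)$ for $y<b^*$, the second (bracketed) form of $I$ in \eqref{def_I_new} is exactly
\[
\int_{-\infty}^\infty h'(y)\, r_{b^*}^{(1)}(b^*,y)\,\diff y \;-\;\beta \;=\;\frac{\varphi(q)}{\delta\,\Phi(q)}\,I(b^*).
\]
Since $b^*$ was chosen precisely so that $I(b^*)=0$, this yields $v_{b^*}'(b^*)=\beta$. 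Convexity of $v_{b^*}$ then gives $v_{b^*}'(x)\le\beta$ for $x\le b^*$ and $v_{b^*}'(x)\ge\beta$ for $x>b^*$, which is \eqref{equiv_inequality2}.

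For the limit case $b^*=\infty$, Lemma~\ref{lemma_classification} gives $h'(\infty)\le q\beta$. Using representation \eqref{v_infty_by_expectation} and passing the derivative inside the integral (as justified at the end of Section~\ref{subsection_candidate}), the monotonicity of $h'$ yields
\[
v_\infty'(x) \;=\; \E_x\Big[\int_0^\infty e^{-qt}\, h'(X_t)\,\diff t\Big] \;\le\; \frac{h'(\infty)}{q} \;\le\; \beta,
\]
which is the required bound. The case $b^*=-\infty$ is symmetric: by Lemma~\ref{lemma_classification}, $h'(-\infty)\ge q\beta$, and replacing $X$ by $Y$ gives $v_{-\infty}'(x)\ge h'(-\infty)/q\ge\beta$.

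The main obstacle is the finite-case calculation that recasts $I(b^*)=0$ as $v_{b^*}'(b^*)=\beta$ through the resolvent density at $b^*$. This identification is not transparent from the displayed formulas but falls out cleanly after the two changes of variables above, with the crucial simplification that only $r_{b^*}^{(1)}$ contributes at $x=b^*$ (the indicator on $r_{b^*}^{(2)}$ vanishes). Once this matching is done, Lemma~\ref{lemma_convexity_v} carries the rest of the argument, and the remaining two cases $b^*=\pm\infty$ are straightforward applications of Lemma~\ref{lemma_classification}.
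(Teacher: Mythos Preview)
Your proposal is correct and follows essentially the same route as the paper: reduce via Lemma~\ref{tussenlemma} to the slope condition \eqref{equiv_inequality2}, use Lemma~\ref{lemma_convexity_v} for the monotonicity of $v_{b^*}'$, and in the finite case evaluate $v_{b^*}'(b^*)$ through the resolvent density $r_{b^*}^{(1)}(b^*,\cdot)$ to obtain $v_{b^*}'(b^*)=\frac{\varphi(q)}{\delta\Phi(q)}I(b^*)+\beta=\beta$. Your handling of the cases $b^*=\pm\infty$ via Lemma~\ref{lemma_classification} is also the same in spirit; the paper passes to the limit $x\to\pm\infty$ by monotone convergence and then invokes convexity, whereas you bound $h'(X_t)\le h'(\infty)$ pointwise, which is an equally valid (and slightly more direct) way to reach the same conclusion.
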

\begin{proof}
In view of Lemma \ref{tussenlemma}, we show that $v_{b^*}$ satisfies \eqref{equiv_inequality2}.
%Below, we first prove for the case  $-\infty < b^* < \infty$ and then for the case $b^* = -\infty$ and $b^* = \infty$.

(i) Suppose $-\infty < b^* < \infty$.   By setting $x = b^*$ in \eqref{v_prime_equals_expectation},
%By Proposition \ref{prop_derivative_all} and $I(b^*) = 0$, we have
%\begin{align*}
%v_{b^*}'(x) = \E_x \Big[ \int_0^\infty e^{-qt} h'(U_t^{b^*}) \diff t \Big], \quad x \in \R.
%\end{align*}
we have
\begin{align*}
v_{b^*}'(b^*)   &
= \int_0^\infty h'(y+b^*) \frac {\varphi(q) - \Phi(q)} {\delta \Phi(q)}  \mathrm{e}^{-\varphi(q) y} \diff y \\
&\qquad + \int_{-\infty}^0 h'(y+b^*) \Big\{ \frac {\varphi(q) - \Phi(q)} {\Phi(q)} \int_0^\infty  \mathrm{e}^{-\varphi(q) z} W^{(q)\prime} (z-y) \diff z -  W^{(q)}(-y)
\Big\} \diff y \\
&=  I(b^*) \frac  {\varphi(q)} {\delta \Phi(q)} + \beta = \beta.
\end{align*}
This together with Lemma \ref{lemma_convexity_v} shows \eqref{equiv_inequality2}.

(ii) Suppose $b^*=\infty$ or $b^*=-\infty$.
%\begin{align*}
%\E [\int_0^\infty e^{-qt} \frac {h(X_t+x+\varepsilon) - h(X_t+x)} \varepsilon  \diff t ] \geq \E [\int_0^\infty e^{-qt} h'(X_t+x)  \diff t ].
%\end{align*}
%we have
%\begin{align*}
%\liminf_{\varepsilon \rightarrow 0}\E [\int_0^\infty e^{-qt} \frac {h(X_t+x+\varepsilon) - h(X_t+x)} \varepsilon  \diff t ] \geq \E [\int_0^\infty e^{-qt} h'(X_t+x)  \diff t ].
%\end{align*}
%We can show that
%\begin{align*}
%v_\infty'(x) = \E_x \Big[\int_0^\infty e^{-qt} h'(X_t) \diff t \Big] \quad \textrm{and} \quad v_{-\infty}'(x) = \E _x\Big[\int_0^\infty e^{-qt} h'(Y_t) \diff t \Big].
%\end{align*}
For the case $b^*=\infty$, by the convexity of $h$, monotone convergence gives
$\lim_{x \uparrow \infty} v_\infty'(x) = \lim_{x \uparrow \infty}  \E  [\int_0^\infty  \mathrm{e}^{-qt} h'(X_t+x) \diff t ] = {h'(\infty)} /q \leq \beta,$ where the last inequality holds by Lemma \ref{lemma_classification}.
Similarly, for the case $b^* = -\infty$, $\lim_{x \downarrow -\infty} v_{-\infty}'(x) = \lim_{x \downarrow -\infty}  \E [\int_0^\infty  \mathrm{e}^{-qt} h'(Y_t+x) \diff t ] = {h'(-\infty)} / q \geq \beta.$
These bounds together with Lemma \ref{lemma_convexity_v} show \eqref{equiv_inequality2}.

\end{proof}

By Lemmas  \ref{lemma_sufficiently_smooth} and \ref{tussenlemma} and Proposition \ref{prop_second_condition},  we now have the main result of the paper.

\begin{theorem} The strategy ${\ell}^{b^*}$ is optimal and the value function is given by $v(x) = v_{b^*}(x)$ for all $x \in \R$.
\end{theorem}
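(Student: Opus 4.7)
The plan is to treat the theorem as the capstone assembly of the ingredients already developed in Section \ref{section_optimality}: apply the verification lemma with $\hat{\ell} = \ell^{b^*}$, after checking admissibility, smoothness, and the required HJB-type inequality. First I would note that $\ell^{b^*}$ is admissible, i.e.\ belongs to $\Pi_\delta$: by its definition as a refraction strategy, the control rate is $\delta \mathbf{1}_{\{U_t^{b^*}>b^*\}}$, which is adapted and takes values in $[0,\delta]$ uniformly in time. Consequently, $v_{b^*}$ as defined in \eqref{v_b_def} coincides with $v_{\ell^{b^*}}$ in the notation of the verification lemma.

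Next I would invoke Lemma \ref{lemma_sufficiently_smooth} to assert that $v_{b^*}$ is sufficiently smooth on $\mathbb{R}$ (i.e.\ $C^1$ in the bounded variation case and $C^2$ in the unbounded variation case), which is exactly the regularity required to apply Lemma \ref{verificationlemma}. Then I would invoke Proposition \ref{prop_second_condition}, which states that $v_{b^*}$ satisfies the inequality \eqref{equiv_inequality}. That proposition was itself obtained by combining Lemma \ref{tussenlemma} (equivalence of \eqref{equiv_inequality} and \eqref{equiv_inequality2} via the identities \eqref{generators}) with Lemma \ref{lemma_convexity_v} (convexity of $v_{b^*}$) and the identity $v_{b^*}'(b^*) = \beta$ derived from $I(b^*)=0$ together with \eqref{v_prime_equals_expectation}; for the degenerate cases $b^* = \pm\infty$ the bounds $h'(\pm\infty)/q$ combined with Lemma \ref{lemma_classification} yield the required inequalities.

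With admissibility, smoothness, and \eqref{equiv_inequality} in hand, a direct application of the verification lemma (Lemma \ref{verificationlemma}) with $\hat{\ell} = \ell^{b^*}$ yields $v_{b^*}(x) = v(x)$ and the optimality of $\ell^{b^*}$ for every $x\in\mathbb{R}$. There is essentially no obstacle left at this stage: all the real technical work — the resolvent-based expression \eqref{v_b_sum}--\eqref{v_J_diff_b_star}, the first-order analysis leading to the choice of $b^*$, the smoothness at $b^*$, the convexity of $v_{b^*}$, and the pasting of \eqref{generators} across the boundary — has already been carried out. The only thing worth emphasizing in the write-up is that the three cases $b^*\in(-\infty,\infty)$, $b^* = \infty$, and $b^* = -\infty$ must each be covered; the verification lemma applies uniformly, since $v_\infty$ and $v_{-\infty}$ inherit smoothness from \eqref{v_infty_analytical} and satisfy \eqref{equiv_inequality2} (and hence \eqref{equiv_inequality}) by Proposition \ref{prop_second_condition}. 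The hard part, if any, is simply making sure the limiting cases are consistent with the verification machinery, but this is handled by the probabilistic representations \eqref{v_infty_by_expectation} and the Lemma \ref{lemma_classification} criterion.
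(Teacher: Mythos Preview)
Your proposal is correct and follows essentially the same route as the paper: the paper's proof is a single sentence invoking Lemmas \ref{lemma_sufficiently_smooth} and \ref{tussenlemma} together with Proposition \ref{prop_second_condition}, and then applying the verification lemma (Lemma \ref{verificationlemma}). Your write-up is simply a more expanded version of that assembly, including the (harmless) extra remark on admissibility of $\ell^{b^*}$ and the observation that the cases $b^*=\pm\infty$ are already absorbed into Proposition \ref{prop_second_condition} and Lemma \ref{lemma_sufficiently_smooth}.
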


We conclude this section with the remark on how the convexity of $h$ is important in deriving our results.
\begin{enumerate}
\item Note that the function $I$ as in  \eqref{def_I_new} is monotone due to the convexity of $h$. Otherwise, there may be multiple $b^*$ such that $I(b^*)$ vanishes; the form of the optimal strategy will be much more complicated.
\item  Thanks to \eqref{v_prime_equals_expectation} and the monotonicity of $h'$, $v'_{b^*}$ is also monotone. This step is necessary in showing the sufficient condition for optimality \eqref{equiv_inequality2}.
\item The convexity of $h$ also helps in technical details; indeed by the monotonicity of $h$, we can exchange limits over integrals in various parts.
\item
 The uniqueness of the optimal strategy  ${\ell}^{b^*}$ is not guaranteed in the complete set of admissible strategies $\Pi_{\delta}$, but it holds when we restrict to the set of refracted strategies when $h$ is not affine.
\end{enumerate}

\section{Convergence to Reflection Strategies}  \label{section_convergence}

Recall from Remark \ref{remark_dual} that
\begin{align} \label{v_tilde_delta}
\begin{split}
\tilde{v}(x; \delta) &:= \inf_{{\ell} \in \Pi_\delta}\E_x \Big[\int_0^\infty  \mathrm{e}^{-qt} (h (Y_t + L_t) + \tilde{\beta} \ell_t ) \diff t \Big] = v (x; \delta, - \tilde{\beta})  + \frac {\tilde{\beta} \delta} q,
\end{split}
\end{align}
where $v (x; \delta, - \tilde{\beta})$ is the value function \eqref{def_value_function} obtained in the previous sections with $X_t$ replaced with $X_t^{(\delta)} := Y_t + \delta t$ and $\beta$ with $-\tilde{\beta}$.
 In this section, we fix the process $Y$ (with its Laplace exponent $\psi_Y(\cdot)$, $\varphi(q) := \sup \{ \lambda \geq 0: \psi_Y(\lambda) = q\}$ and scale function $\mathbb{W}(\cdot)$) and then study the asymptotics as $\delta \uparrow \infty$.

When the absolutely continuous assumption on $L$ is relaxed, the problem defined in  \eqref{v_tilde_delta} becomes a classical singular control problem.  Let $\Pi_\infty$ be the set of admissible strategies consisting of all right-continuous, nondecreasing and adapted processes $L$ with $L_{0-} = 0$.  It is known as in Yamazaki \cite{Yamazaki_2013} under Assumption \ref{assump_f_g} defined below that, for all $x \in \R$,
\begin{align} \label{v_tilde_infty}
\begin{split}
\tilde{v}(x; \infty) &:= \inf_{{\ell} \in \Pi_\infty}\E_x \Big[ \int_{[0,\infty)}  \mathrm{e}^{-qt}  ( h (Y_t + L_{t}) \diff t  + \tilde{\beta} \diff L_{t} ) \Big]  \\
&=  -  \tilde{\beta} \left( \overline{\mathbb{Z}}^{(q)}(x-{b^*(\infty)}) + \frac {\psi_Y'(0+)} q  \right) - \int_{b^*(\infty)}^x \mathbb{W}^{(q)}(x-y) h(y) \diff y\\ &\qquad +
\mathbb{Z}^{(q)}(x-{b^*(\infty)}) \Big( \frac {\varphi(q)} q \int_0^\infty  \mathrm{e}^{-\varphi(q) y} h(y+b^*(\infty)) \diff y + \frac  {\tilde{\beta}} {\varphi(q)} \Big),
\end{split}
\end{align}
where $\mathbb{Z}^{(q)} (z):= 1 + q\int_0^z \mathbb{W}^{(q)}(y) \diff y$ and $\overline{\mathbb{Z}}^{(q)}(z) := \int_0^z \mathbb{Z}^{(q)} (y) \diff y$ for all $z \in \R$.
The infimum  is attained by the \emph{reflected \lev process} $Y_t + L^{b^*(\infty)}_t$ with
\begin{align*}
L^{b^*(\infty)}_t := \sup_{0 \leq t' \leq t} ((b^*(\infty))-Y_{t'}) \vee 0, \quad t \geq 0.
\end{align*}
The lower boundary $b^*(\infty)$ is defined as the unique root of $I_\infty(b) = 0$ where
\begin{align}
I_\infty(b) := \int_0^\infty  h'(y+b)   \mathrm{e}^{- \varphi(q) y}\diff y  +  \tilde{\beta} \frac {q} {\varphi(q)}, \quad b \in \R. \label{def_I_infty}
\end{align}
%\begin{align*}
%\frac {\Phi(q)} q \Psi(a_0;f) + \frac  {\tilde{\beta}} {\varphi(q)}  = \frac {f(a_0)} q.
%\end{align*}
%The value function is given by
%\begin{multline} \label{value_function_no_cost_case}
%\tilde{v}(x; \infty)
%%&= C \left[ - \overline{Z}^{(q)}(x-a_0) - \frac \mu q + \frac 1 {\Phi(q)} Z^{(q)} (x-a_0) \right] +
%%Z^{(q)}(x-a_0) \frac {\Phi(q)} q \Psi(a_0;f) - \varphi^x_{a_0} (f) \\
%=  -  \tilde{\beta} \left( \overline{Z}^{(q)}(x-{b^*(\infty)}) + \frac {\psi_Y'(0+)} q  \right) - \int_{b^*(\infty)}^x \mathbb{W}^{(q)}(x-y) h(y) \diff y\\ +
%Z^{(q)}(x-{b^*(\infty)}) \left( \frac {\varphi(q)} q \int_0^\infty  \mathrm{e}^{-\varphi(q)(q) y} h(y+b^*(\infty)) \diff y + \frac  {\tilde{\beta}} {\Phi(q)} \right).
%\end{multline}
Our objective in this section is to show the convergences of $b^*(\delta)$ to $b^*(\infty)$ and $\tilde{v}(x; \delta)$ to $\tilde{v}(x; \infty)$ as $\delta \uparrow \infty$.

Throughout, we assume that $Y$ is a spectrally negative \lev process and satisfies Assumption
\ref{assump_levy_measure} (2), which means that  there exists $\bar{\theta} > 0$ such that $\exp \psi_Y(-\bar{\theta}) = \E [\exp (-\bar{\theta} Y_1)] < \infty$. In addition, we further assume the condition postulated in Yamazaki \cite{Yamazaki_2013}:

\begin{assump}  \label{assump_f_g}
(1) For some $a \in \R$, the function $h(x) + \tilde{\beta}  q x$ is  decreasing on $(-\infty, a)$ and increasing on $(a,\infty)$.
(2) There exist a $c_0 > 0$ and an $x_0 \geq a$ such that $h'(x) + \tilde{\beta} q \geq c_0$ for a.e.\ $x \geq x_0$.
\end{assump}

We use the explicitly obtained expression of \eqref{v_tilde_delta}, for each $\delta > 0$, with
\begin{align*}
\tilde{\beta} = - \beta,
\end{align*}
and take the limit as $\delta \uparrow \infty$.
In order to emphasize the dependence on $\delta$, let us denote, for all $\delta >0$, $\psi_\delta(s) := \psi_Y(s) + \delta s$ as the Laplace exponent of the spectrally negative \lev process $X^{(\delta)}$.  The scale functions $W_\delta^{(q)}$ and $\Theta_\delta^{(\delta)}$ are defined in an obvious way.  Also, let $\Phi_\delta(q) := \sup \{ \lambda \geq 0: \psi_\delta(\lambda) = q\}$  and
\begin{multline} \label{def_I_delta_b}
I_\delta(b) :=  \frac {\varphi(q)-\Phi_\delta(q)} {\varphi(q)} \int_0^\infty  h'(y+b)   \mathrm{e}^{- \varphi(q) y}\diff y   + \delta \Big[ \int_{-\infty}^0 h'(y+b) \int_0^\infty  \mathrm{e}^{- \varphi(q) z} \Theta_\delta^{(q)} (z-y) \diff z  \diff y -  \beta \frac {\Phi_\delta(q)} {\varphi(q)} \Big].
\end{multline}
Assumption  \ref{assump_f_g} guarantees that $h'(-\infty) -\beta q < 0$ and $h'(\infty) -\beta q > 0$. Hence, by Lemma \ref{lemma_classification}, $I_\delta(\infty) > 0$ and $I_\delta(-\infty)  < 0$, implying that there  always exists a root $b^*(\delta) \in (-\infty, \infty)$.  As in the discussion given in Section \ref{subsection_candidate}, Assumption  \ref{assump_f_g} guarantees that the root $b^*(\delta)$ is unique.  We also let $U^{(\delta), b^*(\delta)}$ be the corresponding optimally controlled process. In other words,  it is a refracted \lev process defined as the solution to the SDE: $\diff U_t^{(\delta), b^*(\delta)}  = \diff X_t^{(\delta)} - \delta \mathbf{1}_{\{ U^{(\delta), b^*(\delta)} > b^*(\delta)\}} \diff t$, $t \geq 0$.

Note that
 we have $\Phi_\delta (q) \xrightarrow{\delta \uparrow \infty} 0$; because $q  = \psi_Y(\Phi_\delta(q)) + \delta \Phi_\delta(q)$ and $\psi_Y$ is continuous on $[0,\infty)$ and vanishes at zero,
\begin{align}
\delta \Phi_\delta(q) \xrightarrow{\delta \uparrow \infty} q.  \label{convergence_product_delta_Phi}
\end{align}
% \begin{remark} \label{convergence_scale_delta}
 By (8.20) of \cite{Kyprianou_2006}, for any $\theta > \varphi(q)$ (which is uniformly larger than $\Phi_\delta(q)$),
 %\begin{align*}
$\int_{[0, \infty)}   \mathrm{e}^{-\theta x} \delta W^{(q)}_\delta(\diff x) = {\delta \theta} / {(\psi_Y(\theta)+\delta \theta -q)} \xrightarrow{\delta \uparrow \infty} 1$.
%\end{align*}
Hence continuity theorem gives that
%$\delta W^{(q)}_\delta(\diff x) \xrightarrow{\delta \uparrow \infty} \delta_0(\diff x)$ and hence
\begin{align}
\delta W^{(q)}_\delta(x) = \delta  W^{(q)}_\delta[0,x]  \xrightarrow{\delta \uparrow \infty} 1, \quad x > 0. \label{convergence_scale_delta}
\end{align}
% Consequently, for all $y < 0$,
%\begin{align*}
% \lim_{\delta \rightarrow \infty} \int_0^\infty  \mathrm{e}^{- \varphi(q) z} \delta \Theta_\delta^{(q)} (z-y) \diff z
% =  \lim_{\delta \rightarrow \infty} \int_0^\infty  \mathrm{e}^{- \varphi(q) z} (\delta W^{(q)\prime}_\delta (z-y) - \Phi_\delta(q) \delta W^{(q)}_\delta (z-y) ) \diff z
%= \delta \int_{[0, \varepsilon)} (W^{(q)}_\delta (\diff x) - \Phi_\delta(q) W^{(q)}_\delta (x) \diff x)
%\xrightarrow{\delta \uparrow \infty} 0.
%\end{align*}
%This shows the claim.
%\end{remark}

\subsection{Convergence of optimal thresholds}  We first show the convergence of  $b^*(\delta)$ to   $b^*(\infty)$. Toward this end, we first show the following.

\begin{lemma} \label{confergence_I_b}
For all $b \in \R$, $I_\delta(b) \xrightarrow{\delta \uparrow \infty}  I_\infty(b)$. %\red{does it have to be uniform conv?}
\end{lemma}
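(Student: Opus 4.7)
The plan is to split $I_\delta(b)$ according to \eqref{def_I_delta_b} into three summands and pass to the limit in each separately. The identity $q=\psi_Y(\Phi_\delta(q))+\delta\Phi_\delta(q)$ and continuity of $\psi_Y$ at zero force $\Phi_\delta(q)\to 0$; combined with \eqref{convergence_product_delta_Phi} this takes care of the first and third summands: the coefficient $(\varphi(q)-\Phi_\delta(q))/\varphi(q)\to 1$, giving $\int_0^\infty h'(y+b)e^{-\varphi(q)y}\diff y$ as the limit of the first term, and $-\beta\delta\Phi_\delta(q)/\varphi(q)\to -\beta q/\varphi(q)=\tilde\beta q/\varphi(q)$. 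The whole difficulty concentrates on showing that the middle summand
\begin{equation*}
J_\delta(b):=\delta\int_{-\infty}^0 h'(y+b)\int_0^\infty e^{-\varphi(q)z}\Theta_\delta^{(q)}(z-y)\diff z\,\diff y
\end{equation*}
vanishes as $\delta\uparrow\infty$.

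My approach for $J_\delta(b)$ is probabilistic. The substitution $w=z-y$ together with the identity \eqref{density_running_min} applied to $X^{(\delta)}$ gives, for each $y<0$,
\begin{equation*}
\delta\int_0^\infty e^{-\varphi(q)z}\Theta_\delta^{(q)}(z-y)\diff z=\frac{\delta\Phi_\delta(q)}{q}\,e^{-\varphi(q)y}\,\E\bigl[e^{\varphi(q)\underline{X}^{(\delta)}_{\EQ}}\mathbf{1}_{\{\underline{X}^{(\delta)}_{\EQ}<y\}}\bigr].
\end{equation*}
Using $e^{\varphi(q)\underline{X}^{(\delta)}_{\EQ}}\le e^{\varphi(q)y}$ on the indicator event, the exponentials cancel and the expression is dominated by $(\delta\Phi_\delta(q)/q)\,\p(\underline{X}^{(\delta)}_{\EQ}<y)$, which is bounded in $\delta$.

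Pointwise convergence to zero will follow once I show $\underline{X}^{(\delta)}_{\EQ}\to 0$ in probability as $\delta\uparrow\infty$. From the Wiener--Hopf formula in Remark \ref{remark_wiener_hopf}(1), applied to $X^{(\delta)}$ in place of $X$,
\begin{equation*}
\E\bigl[e^{\theta\underline{X}^{(\delta)}_{\EQ}}\bigr]=\frac{q(\theta-\Phi_\delta(q))}{\Phi_\delta(q)(\psi_Y(\theta)+\delta\theta-q)},\qquad \theta>0,
\end{equation*}
and \eqref{convergence_product_delta_Phi} together with $\Phi_\delta(q)\to 0$ forces this ratio to tend to $1$; since $\underline{X}^{(\delta)}_{\EQ}\le 0$ a.s., this is equivalent to convergence in probability to $0$. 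Hence $\p(\underline{X}^{(\delta)}_{\EQ}<y)\to 0$ for each $y<0$, and the integrand in $J_\delta(b)$ vanishes pointwise.

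To pass the limit under the outer integral I would appeal to dominated convergence. The pathwise inequality $X^{(\delta)}_t\ge Y_t$ yields $\underline{X}^{(\delta)}_{\EQ}\ge\underline{Y}_{\EQ}$, so $\p(\underline{X}^{(\delta)}_{\EQ}<y)\le\p(\underline{Y}_{\EQ}<y)$; a Chernoff bound using the exponential moment $\E[e^{\bar\theta|\underline{Y}_{\EQ}|}]<\infty$ (valid by Assumption \ref{assump_levy_measure}(2) and the argument around \eqref{finiteness_laplace_negagtive} applied to $Y$) then gives $\p(\underline{Y}_{\EQ}<y)\le Ce^{\bar\theta y}$. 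Combined with the polynomial growth of $|h'|$ from Assumption \ref{assump_h}, the integrand $|h'(y+b)|\cdot Ce^{\bar\theta y}$ is integrable on $(-\infty,0)$, providing the required uniform-in-$\delta$ dominating function. The main obstacle will be precisely this domination step: the factor $e^{-\varphi(q)y}$ in the original expression is genuinely large for $y$ very negative, and only after exploiting the cancellation with the indicator event and the stochastic domination by $Y$ does one obtain an integrable bound.
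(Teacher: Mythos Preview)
Your proposal is correct and follows essentially the same approach as the paper: both handle the first and third summands via $\Phi_\delta(q)\to 0$ and \eqref{convergence_product_delta_Phi}, and both show the middle term vanishes by interpreting $\Theta_\delta^{(q)}$ through \eqref{density_running_min}, using the Wiener--Hopf identity to get $\underline{X}^{(\delta)}_{\EQ}\to 0$, and obtaining an exponentially decaying dominating function from the stochastic domination $\underline{X}^{(\delta)}_{\EQ}\ge\underline{Y}_{\EQ}$ together with the exponential moment of $|\underline{Y}_{\EQ}|$. One small correction: the finiteness in \eqref{finiteness_laplace_negagtive} is established for sufficiently small $\theta\in(0,\bar\theta)$, not for $\bar\theta$ itself, so your Chernoff bound should use such a $\theta$ rather than $\bar\theta$; since $|h'|$ has polynomial growth this changes nothing.
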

\begin{proof}

It is clear that $\frac {\varphi(q)-\Phi_\delta(q)} {\varphi(q)} \int_0^\infty  h'(y+b)   \mathrm{e}^{- \varphi(q) y}\diff y \xrightarrow{\delta \uparrow \infty} \int_0^\infty  h'(y+b)   \mathrm{e}^{- \varphi(q) y}\diff y$.  Hence, it is left to show that
%\begin{align*}
$\int_{-\infty}^0 |h'(y+b)| \int_0^\infty  \mathrm{e}^{- \varphi(q) z} \delta \Theta_\delta^{(q)} (z-y) \diff z  \diff y \xrightarrow{\delta \uparrow \infty} 0$.
%\end{align*}
%Because $h'$ is nondecreasing, the supremum on the left hand side is bounded by
%\begin{align*}
%\sup_{b \in [\underline{b}, \overline{b}]}\int_{-\infty}^0 (|h'(y+\underline{b})|+|h'(y+\overline{b})|) \int_0^\infty  \mathrm{e}^{- \varphi(q) z} \delta \Theta_\delta^{(q)} (z-y) \diff z  \diff y.
%\end{align*}
%We shall show that this vanishes in the limit.

Recalling  Assumption
\ref{assump_levy_measure} (2), we choose $0 < \epsilon < \bar{\theta}$. For any $y < 0$,
\begin{align*}
\int_0^\infty  \mathrm{e}^{- \varphi(q) z} \delta \Theta_\delta^{(q)} (z-y) \diff z
% =   \mathrm{e}^{\epsilon y}\int_0^\infty  \mathrm{e}^{- (\varphi(q) +\epsilon) z}    \mathrm{e}^{\epsilon (z-y)} \delta\Theta_\delta^{(q)} (z-y) \diff z  \\
=  \mathrm{e}^{\epsilon y}\int_{-y}^\infty  \mathrm{e}^{- (\varphi(q) +\epsilon) (w+y)}   \mathrm{e}^{\epsilon w} \delta \Theta_\delta^{(q)} (w) \diff w   \leq    \mathrm{e}^{\epsilon y}\int_{-y}^\infty  \mathrm{e}^{\epsilon w} \delta \Theta_\delta^{(q)} (w) \diff w,\end{align*}
which is bounded from above by $\exp(\epsilon y)$ times a constant because (with $\underline{X}^{(\delta)}$ and $\underline{Y}$ the running infimum processes of $X^{(\delta)}$ and $Y$, respectively)
\begin{align*}
\int_{-y}^\infty   \mathrm{e}^{\epsilon w} \delta \Theta_\delta^{(q)} (w) \diff w \leq \delta \frac {\Phi_\delta(q)} q \E[ \mathrm{e}^{\epsilon |\underline{X}_{\EQ}^{(\delta)}|}] \leq  \delta \frac {\Phi_\delta(q)} q \E[ \mathrm{e}^{\epsilon |\underline{Y}_{\EQ}|}]
%=\frac {\delta (-\epsilon - \Phi_\delta(q))} {\psi_Y(-\epsilon) - \delta \epsilon -q},
\end{align*}
which is bounded in $\delta$ by  \eqref{finiteness_laplace_negagtive} (with $X$ replaced with $Y$) and   \eqref{convergence_product_delta_Phi}. This, together with $\int_{-\infty}^0 |h'(y+b)|   \mathrm{e}^{\epsilon y} \diff y < \infty$ by Assumption \ref{assump_h}, allows us to apply the dominated convergence theorem and
\begin{align*}
\lim_{\delta \rightarrow \infty}\int_{-\infty}^0 |h'(y+b)|  \int_0^\infty  \mathrm{e}^{- \varphi(q) z} \delta \Theta_\delta^{(q)} (z-y) \diff z  \diff y
%\\ &= \delta  \int_{-\infty}^0 |h'(y+b)|  \mathrm{e}^{\epsilon y}\int_0^\infty  \mathrm{e}^{- (\varphi(q) +\epsilon) z}   \mathrm{e}^{\epsilon (z-y)} \Theta_\delta^{(q)} (z-y) \diff z  \diff y \\
%&= \delta  \int_{-\infty}^0 |h'(y+b)|  \mathrm{e}^{\epsilon y}\int_{-y}^\infty  \mathrm{e}^{- (\varphi(q) +\epsilon) (w+y)}   \mathrm{e}^{\epsilon w} \Theta_\delta^{(q)} (w) \diff w  \diff y \\
= \int_{-\infty}^0 |h'(y+b)|  \lim_{\delta \rightarrow \infty} \int_{-y}^\infty  \mathrm{e}^{- \varphi(q) (z+y)} \delta \Theta_\delta^{(q)} (z) \diff z  \diff y. 
%&\leq \Big( \int_{-\infty}^0 |h'(y+b)|  \mathrm{e}^{\epsilon y} \diff y \Big) \Big(   \int_{0}^\infty   \mathrm{e}^{\epsilon w} \delta \Theta_\delta^{(q)} (w) \diff w \Big).
\end{align*}
Again, by Remark \ref{remark_wiener_hopf} (1) and \eqref{convergence_product_delta_Phi}, for any $\theta > 0$,
\begin{align*}
\E [ \mathrm{e}^{\theta \underline{X}^{(\delta)}_{\EQ}}]= \frac q {\delta \Phi_\delta(q)}\frac {\delta (\theta - \Phi_\delta(q))} {\psi_Y(\theta) + \delta \theta -q} \xrightarrow{\delta \uparrow \infty} 1.
\end{align*}
Hence $\underline{X}_{\EQ}^{(\delta)}$ converges to zero in distribution as $\delta \uparrow \infty$.
Using this and \eqref{convergence_product_delta_Phi}, for all $y < 0$,
\begin{align*}
 \int_{-y}^\infty  \mathrm{e}^{- \varphi(q) (z+y)} \delta \Theta_\delta^{(q)} (z) \diff z  \leq  \int_{-y}^\infty \delta \Theta_\delta^{(q)} (z) \diff z = \delta \frac {\Phi_\delta(q)} q \p \{ \underline{X}_{\EQ}^{(\delta)} < y\} \xrightarrow{\delta \uparrow \infty} 0.
\end{align*}
This shows the claim.

%By Wiener-Hopf factorization and Assumption
%\ref{assump_levy_measure} (2),
%\begin{align*}
% \int_{-y}^\infty   \mathrm{e}^{\epsilon w} \delta \Theta_\delta^{(q)} (w) \diff w = \delta \frac {\Phi_\delta(q)} q \ \mathrm{e}^\delta[e^{\epsilon \underline{X}_{\EQ}} 1_{\{\underline{X}_{\EQ} < y \}}] =\frac {\delta (\epsilon - \Phi_\delta(q))} {\psi_Y(\epsilon) + \delta \epsilon -q} -  \delta \frac {\Phi_\delta(q)} q \E^\delta[e^{\epsilon \underline{X}_{\EQ}} 1_{\{\underline{X}_{\EQ} > y \}}] \\
% \leq \frac {\delta (\epsilon - \Phi_\delta(q))} {\psi_Y(\epsilon) + \delta \epsilon -q} -  \delta \frac {\Phi_\delta(q)} q \E^\delta[e^{-\epsilon \varepsilon} 1_{\{\underline{X}_{\EQ} > -\varepsilon \}}]
%\end{align*}
%Here
%\begin{align*}
%\frac {\delta (\epsilon - \Phi_\delta(q))} {\psi(\epsilon) + \delta \epsilon -q}  \xrightarrow{\delta \uparrow \infty} 1.
%\end{align*}
%In addition, because

 \end{proof}

By Lemma \ref{confergence_I_b}, we now show the convergence of $b^*(\delta)$.
\begin{proposition} \label{convergence_b_star}We have $b^*(\delta) \xrightarrow{\delta \uparrow \infty} b^*(\infty)$.\end{proposition}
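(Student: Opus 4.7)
The plan is to extract convergent subsequences of $\{b^*(\delta)\}_{\delta>0}$, identify each limit point as a root of $I_\infty$, and then invoke uniqueness of $b^*(\infty)$ to conclude. The two ingredients already available are the pointwise convergence $I_\delta(b)\to I_\infty(b)$ from Lemma \ref{confergence_I_b} and the fact that each $I_\delta$ and $I_\infty$ is nondecreasing in $b$ (a consequence of the convexity of $h$). Assumption \ref{assump_f_g} moreover forces $I_\infty(b)\to(h'(\infty)-\beta q)/\varphi(q)>0$ as $b\uparrow\infty$ and $I_\infty(b)\to(h'(-\infty)-\beta q)/\varphi(q)<0$ as $b\downarrow-\infty$, so $b^*(\infty)$ is the unique zero of $I_\infty$ and $I_\infty$ is continuous there.

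First I would establish boundedness of $\{b^*(\delta)\}$ for large $\delta$. Choose $b_+>b^*(\infty)$ with $I_\infty(b_+)>0$ and $b_-<b^*(\infty)$ with $I_\infty(b_-)<0$. Lemma \ref{confergence_I_b} then yields $\delta_0$ such that $I_\delta(b_-)<0<I_\delta(b_+)$ for all $\delta\ge\delta_0$; since $I_\delta$ is nondecreasing with unique zero $b^*(\delta)$, this gives $b_-\le b^*(\delta)\le b_+$. Now take an arbitrary subsequence $\delta_n\uparrow\infty$ and, by Bolzano--Weierstrass, pass to a further subsequence (still denoted $\delta_n$) with $b^*(\delta_n)\to c\in[b_-,b_+]$.

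Next I would identify $c$ as $b^*(\infty)$. Fix $\varepsilon>0$. For all sufficiently large $n$ we have $c-\varepsilon\le b^*(\delta_n)\le c+\varepsilon$, and monotonicity of $I_{\delta_n}$ together with $I_{\delta_n}(b^*(\delta_n))=0$ yields
\begin{equation*}
I_{\delta_n}(c-\varepsilon)\le 0\le I_{\delta_n}(c+\varepsilon).
\end{equation*}
Sending $n\to\infty$ and using Lemma \ref{confergence_I_b} gives $I_\infty(c-\varepsilon)\le 0\le I_\infty(c+\varepsilon)$, and then sending $\varepsilon\downarrow 0$ via the continuity of $I_\infty$ (which follows from the dominated convergence theorem applied to $\int_0^\infty h'(y+b)\,\mathrm{e}^{-\varphi(q)y}\diff y$, using convexity of $h$ to bound $h'$ on compacts) yields $I_\infty(c)=0$. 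Uniqueness of the root of $I_\infty$ forces $c=b^*(\infty)$. Since every subsequence of $\{b^*(\delta)\}$ has a further subsequence converging to $b^*(\infty)$, the full net converges to $b^*(\infty)$.

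The main obstacle I anticipate is the monotonicity/continuity bookkeeping in the third step rather than any deep analytical issue: one must verify that Lemma \ref{confergence_I_b} can be applied at the two fixed points $c\pm\varepsilon$ (it can, since the lemma is pointwise) and that $I_\infty$ is continuous at $c$ (which holds because $h'$ is monotone and hence has at most countably many jumps, while the exponential weight ensures integrability). Once these are observed the rest is a standard monotone-convergence argument for the roots of a sequence of monotone functions converging pointwise to a monotone limit with a unique zero.
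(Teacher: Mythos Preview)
Your argument is correct but takes a different and more elementary route than the paper. The paper establishes a uniform derivative bound $I_\delta'(b) \geq \tfrac{1}{2} I_\infty'(b)$ for large $\delta$, extracts $\alpha > 0$ with $I_\infty' \geq \alpha$ on a neighbourhood of $b^*(\infty)$, and combines this slope estimate with $|I_\delta(b^*(\infty))| \to 0$ to trap $b^*(\delta)$ in $(b^*(\infty)-\varepsilon, b^*(\infty)+\varepsilon)$; this yields a quantitative rate, essentially $|b^*(\delta) - b^*(\infty)| \leq (2/\alpha)\,|I_\delta(b^*(\infty))|$. You instead use only monotonicity of $I_\delta$, uniqueness of the limiting root, and pointwise convergence at two fixed test points --- the standard squeeze for zeros of monotone functions, with no need to differentiate. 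Incidentally, your subsequence step is unnecessary: since $I_\infty$ is nondecreasing with a unique zero, $I_\infty(b^*(\infty)-\varepsilon) < 0 < I_\infty(b^*(\infty)+\varepsilon)$ for every $\varepsilon > 0$, so your boundedness argument applied with $b_\pm = b^*(\infty) \pm \varepsilon$ already gives $|b^*(\delta) - b^*(\infty)| \leq \varepsilon$ for all large $\delta$ directly, without invoking continuity of $I_\infty$ or Bolzano--Weierstrass.
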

\begin{proof} Let $\underline{\delta} > 0$ be large enough such that $\Phi_\delta (q) \leq 1/2$ for all $\delta \geq \underline{\delta}$.

By differentiating \eqref{def_I_infty},
\begin{align}
I_\infty'(b) =  \int_0^\infty  h''(y+b)   \mathrm{e}^{- \varphi(q) y}\diff y + \sum_{y \geq 0}   \Delta h'(y+b)   \mathrm{e}^{- \varphi(q) y}, \quad b \in \R. \label{I_infty_derivative}
\end{align}
In addition, by \eqref{def_I_delta_b}  and because $h'$ is nondecreasing,  we have a lower bound:
\begin{align}
I_\delta'(b) \geq  \frac 1 2 I_\infty'(b), \quad b \in \R, \; \delta \geq \underline{\delta}. \label{I_delta_bound_half} \end{align}
Notice that $I_\infty(b)$  is strictly increasing on $(-\infty, \tilde{b})$ for some $\tilde{b} \in (-\infty, \infty]$. When $\tilde{b} < \infty$, we must have $I_\infty (\tilde{b}) > 0$; otherwise Assumption \ref{assump_f_g}
 (2) is violated.
 This means that we can choose some $\bar{b}  \in (b^*(\infty), \tilde{b})$ such that $I'_\delta (\bar{b}) > 0$. In addition, for all $b < \bar{b}$ we can obtain from \eqref{I_infty_derivative} a lower bound $I_\infty'(b) \geq e^{-\varphi(q) (\bar{b}-b)} I_\infty'(\bar{b})$.
%  Fix $\underline{b} < b^*(\infty)$, for any $\underline{b} < b < \bar{b}$, $\frac 1 2 I_\infty'(b) \geq e^{-\varphi(q) (\bar{b}-\underline{b})}\frac 1 2 I_\infty'(\bar{b})$.
% \begin{align*}
%\frac 1 2 I_\infty'(b) =  \frac 1 2 \Big[ e^{\varphi(q) b}\int_b^\infty  h''(y)   \mathrm{e}^{- \varphi(q) y}\diff y + \sum_{y \geq b}   (h'(y+)- h'(y-))   \mathrm{e}^{- \varphi(q) (y-b)} \Big] \\
%\geq  \frac 1 2 \Big[ e^{\varphi(q) \underline{b}}\int_{\bar{b}}^\infty  h''(y)   \mathrm{e}^{- \varphi(q) y}\diff y + \sum_{y \geq \bar{b}}   (h'(y+)- h'(y-))   \mathrm{e}^{- \varphi(q) (y-\underline{b})} \Big] \\
%= e^{-\varphi(q) (\bar{b}-\underline{b})}\frac 1 2 I_\infty'(b) \end{align*}
 Hence we can choose a sufficiently small $\bar{\varepsilon} > 0$ and $\alpha > 0$ such that $I_\infty'(b) \geq \alpha$ on $(b^*(\infty)- \bar{\varepsilon}, \, b^*(\infty)+\bar{\varepsilon})$; this together with  \eqref{I_delta_bound_half} gives
 \begin{align}
\inf_{\delta \geq \underline{\delta}, \, b \in (b^*(\infty)- \bar{\varepsilon}, b^*(\infty)+\bar{\varepsilon})} I_\delta'(b) \geq \frac \alpha 2 > 0. \label{I_lower_bound_inf}
\end{align}
 % uniformly on $\delta \geq \underline{\delta}$ and $b \in (b^*(\infty)- \bar{\varepsilon}, b^*(\infty)+\bar{\varepsilon})$.

Fix any $0 < \varepsilon < \bar{\varepsilon}$. By Lemma \ref{confergence_I_b}, we can choose a sufficiently large $\overline{\delta} > \underline{\delta}$ such that $|I_\infty(b^*(\infty))- I_\delta(b^*(\infty))| = |I_\delta(b^*(\infty))|< \varepsilon \alpha/2$  for any $\delta \geq \overline{\delta}$.  Then, by  \eqref{I_lower_bound_inf},  we must have for any $\delta \geq \overline{\delta}$ that $I_\delta(b^*(\infty)-\varepsilon) < I_\delta(b^*(\infty)) - \varepsilon \alpha/2 < 0$; similarly, $I_\delta(b^*(\infty)+\varepsilon) > I_\delta(b^*(\infty)) + \varepsilon \alpha/2 > 0$.  This means, together with the monotonicity of $I_\delta$, that $b^*(\infty) - \varepsilon < b^*(\delta) < b^*(\infty) + \varepsilon$.  Because $\varepsilon$ was chosen arbitrarily, this completes the proof.

%
%
%
%First show that $b^*(\delta)$ are bounded in a compact set for a sufficient large $\delta$.
%
%By triangular inequality and $I_\infty(b^*(\infty)) = I_\delta(b^*(\delta))=0$,
%\begin{align*}
%|I_\infty(b^*(\infty)) - I_\infty(b^*(\delta))| \leq |I_\delta(b^* (\delta)) - I_\infty(b^*(\delta))|.
%\end{align*}
%The boundedness of $b^*(\delta)$ and the uniform convergence in Lemma  \ref{confergence_I_b} give
%$\lim_{\delta \rightarrow \infty}|I_\infty(b^*(\infty)) - I_\infty(b^*(\delta))| =0$.
%
%Suppose for contradiction that the convergence does not hold; then there is $\epsilon  > 0$ and a subsequence such that $\lim_{\delta' \rightarrow \infty}b^*(\delta') > b^*(\infty) + \epsilon$ or $\lim_{\delta' \rightarrow \infty}b^*(\delta') < b^*(\infty) - \epsilon$.  The continuity and monotonicity of $I$ gives $I_\infty(b^*(\infty)) - I_\infty(b^*(\infty)+\epsilon) =0$ and $I_\infty(b^*(\infty)) - I_\infty(b^*(\infty)-\epsilon) =0$, respectively, which contradicts because $I_\infty$ is strictly increasing on $(-\infty, M)$.  Hence the claim holds by contradiction.
\end{proof}

\subsection{Convergence of value functions}
%
%In view of \eqref{v_tilde_delta},
%
%We have
%\begin{align*}
%\inf_{\pi }\E_x \Big[\int_0^\infty e^{-qt} (h (X_t^{(\delta)} - \tilde{L}_t^\pi) - \tilde{\beta} \tilde{\ell}_t ) \diff t \Big] = \E_x \Big[\int_0^\infty e^{-qt} (U_t^{(\delta), b^*(\delta)}) - \tilde{\beta} \delta  1_{\{ U_t^{(\delta), b^*(\delta)} > b^*(\delta)\}}) \diff t \Big],
%\end{align*}
%where the optimally controlled process $U_t^{(\delta), b^*(\delta)}$ is a refracted \lev process that moves like $Y$ above $b^*(\delta)$ and like $X_t^{(\delta)}$ below $b^*(\delta)$.

With the help of Proposition \ref{convergence_b_star}, we show the convergence of $\tilde{v}(x; \delta)$ to $\tilde{v}(x; \infty)$. More precisely, we show the following theorem.

\begin{theorem} \label{theorem_convergence}Uniformly in $x$ in compacts,
$\tilde{v}(x; \delta) \xrightarrow{\delta \uparrow \infty} \tilde{v}(x; \infty)$.
\end{theorem}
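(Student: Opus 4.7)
My plan is to compute the pointwise limit as $\delta \uparrow \infty$ of the explicit scale-function formula for $\tilde v(x;\delta)$ obtained by combining \eqref{v_tilde_delta} with \eqref{v_b_sum}--\eqref{v_J_diff_b_star}, verify that it agrees with \eqref{v_tilde_infty}, and then upgrade pointwise to uniform convergence on compacts via convexity. Concretely, \eqref{v_tilde_delta} gives
\[
\tilde v(x;\delta) \;=\; v_{b^*(\delta)}^{(1)}(x) + v_{b^*(\delta)}^{(2)}(x)\mathbf{1}_{\{x>b^*(\delta)\}} + \frac{\tilde\beta\delta}{q},
\]
where one reads \eqref{v_b_expression}--\eqref{v_J_diff_b_star} with $\beta=-\tilde\beta$ and with the $\delta$-dependent quantities $W_\delta^{(q)},\Phi_\delta(q),\Theta_\delta^{(q)}$, while $\mathbb{W}^{(q)}$ and $\varphi(q)$ are $\delta$-independent. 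The asymptotic inputs I would collect are Proposition \ref{convergence_b_star} ($b^*(\delta)\to b^*(\infty)$), $\Phi_\delta(q)\to 0$ with \eqref{convergence_product_delta_Phi} ($\delta\Phi_\delta(q)\to q$), \eqref{convergence_scale_delta} ($\delta W_\delta^{(q)}(x)\to 1$ for $x>0$, so that $\delta W_\delta^{(q)}(\diff z)$ converges weakly to $\delta_0$ on $[0,\infty)$), and the refined expansion
\[
\frac{\tilde\beta\delta}{q} \;=\; \frac{\tilde\beta}{\Phi_\delta(q)} - \frac{\tilde\beta\,\psi_Y'(0+)}{q} + o(1),
\]
which follows from $q=\psi_Y(\Phi_\delta(q))+\delta\Phi_\delta(q)$ and the Taylor expansion $\psi_Y(s)=\psi_Y'(0+)s+o(s)$.

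I would then pass to the limit term by term. The key delicate step is the cancellation of the divergent $\tilde\beta\delta/q$ against a matching pole inside $v_{b^*(\delta)}^{(1)}$: the $\beta\delta$-summand of the first bracket of \eqref{v_b_expression} produces $e^{\Phi_\delta(q)(x-b^*(\delta))}\cdot\frac{-\tilde\beta(\varphi(q)-\Phi_\delta(q))}{\varphi(q)\Phi_\delta(q)}$, which equals $-\tilde\beta/\Phi_\delta(q)+\tilde\beta/\varphi(q)+o(1)$; adding the expansion above leaves the finite residual $\tilde\beta/\varphi(q)-\tilde\beta\,\psi_Y'(0+)/q$, precisely matching the corresponding constants in \eqref{v_tilde_infty}. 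The $h$-dependent part of $v_{b^*(\delta)}^{(1)}$ is handled by dominated convergence, with the same moment estimates as in Lemma \ref{confergence_I_b} (via \eqref{finiteness_laplace_negagtive} and Assumption \ref{assump_h}): the prefactor $(\varphi(q)-\Phi_\delta(q))/(\delta\Phi_\delta(q))\to \varphi(q)/q$ yields $\frac{\varphi(q)}{q}\int_0^\infty e^{-\varphi(q)y}h(y+b^*(\infty))\diff y$, and the inner integrals over $y<0$ weighted by $\delta W_\delta^{(q)\prime}(z-y)\diff z$ vanish in the limit by weak concentration at $0$. When $x>b^*(\infty)$, the additional contribution of $v_{b^*(\delta)}^{(2)}(x)$ is treated similarly: substituting $w=x-z$ in \eqref{def_M} and using $\Phi_\delta(q)\to 0$ gives $M(x;b^*(\delta))\to \frac{\varphi(q)}{q}(\mathbb{Z}^{(q)}(x-b^*(\infty))-1)$, while the $\delta$-scaled kernels $\delta\int_{b^*(\delta)}^x \mathbb{W}^{(q)}(x-z)W_\delta^{(q)\prime}(z-b^*(\delta)-y)\diff z$ concentrate at $z=b^*(\infty)+y$ and produce $\mathbb{W}^{(q)}(x-b^*(\infty)-y)$ in the limit. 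Assembling these pieces reproduces the $-\tilde\beta\,\overline{\mathbb{Z}}^{(q)}(x-b^*(\infty))$, $-\int_{b^*(\infty)}^x \mathbb{W}^{(q)}(x-y)h(y)\diff y$, and $\mathbb{Z}^{(q)}(x-b^*(\infty))$-weighted terms of \eqref{v_tilde_infty}.

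Finally, each $\tilde v(\cdot;\delta) = v_{b^*(\delta)}+\tilde\beta\delta/q$ is convex by Lemma \ref{lemma_convexity_v}, and $\tilde v(\cdot;\infty)$ is convex (as the value of a convex singular control problem) and continuous by its explicit form \eqref{v_tilde_infty}, so the classical fact that a pointwise convergent sequence of convex functions on $\R$ with a finite convex limit converges uniformly on every compact set immediately gives the uniform-on-compacts statement. The hardest part is the bookkeeping for the pole cancellation in the previous paragraph: one must track the $1/\Phi_\delta(q)$ singularity and the next-order $\psi_Y'(0+)/q$ offset \emph{simultaneously} and verify they reassemble into the exact constants of \eqref{v_tilde_infty}; the weak-convergence arguments for integrals against $\delta W_\delta^{(q)\prime}(\diff z)$ also need a uniform-in-$\delta$ moment bound on $\underline{X}_{\EQ}^{(\delta)}$ of exactly the type established in the proof of Lemma \ref{confergence_I_b}.
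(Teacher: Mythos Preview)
Your overall strategy is viable but differs from the paper's in two places. For the uniform-on-compacts upgrade, the paper does not invoke convexity; it observes that $\Pi_\delta \subset \Pi_{\delta'}$ for $\delta < \delta'$, so $\delta \mapsto \tilde v(x;\delta)$ is nonincreasing and Dini's theorem applies. For the pointwise limit when $x \geq b^*(\infty)$, the paper avoids computing $v^{(2)}_{b^*(\delta)}$ directly: it instead establishes convergence of the \emph{derivative} $\tilde v'(x;\delta)$ via the simpler representation \eqref{v_prime_equals_expectation}, fixes $\underline{x} < b^*(\infty)$, writes $\tilde v(x;\delta) = \tilde v(\underline{x};\delta) + \int_{\underline{x}}^x \tilde v'(z;\delta)\,\diff z$, and passes to the limit with Fatou (one-sidedness suffices because $\tilde v(x;\delta) \geq \tilde v(x;\infty)$ is automatic from $\Pi_\delta \subset \Pi_\infty$). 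Your convexity-based uniform upgrade is at least as clean; your direct computation of $v^{(2)}$ is feasible in principle but messier, since the $\beta\delta$-term in the first line of \eqref{v_J_diff_b_star} must be expanded to second order in $\Phi_\delta(q)$ to recover the $\overline{\mathbb{Z}}^{(q)}$ contribution.

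There is, however, a concrete error in your sketch. In the second line of \eqref{v_J_diff_b_star} (integral over $y<0$) you claim that $\delta\int_{b^*(\delta)}^x \mathbb{W}^{(q)}(x-z)W_\delta^{(q)\prime}(z-b^*(\delta)-y)\,\diff z$ concentrates at $z = b^*(\infty)+y$ and yields $\mathbb{W}^{(q)}(x-b^*(\infty)-y)$. But for $y<0$ the point $b^*(\infty)+y$ lies \emph{below} the lower integration limit $b^*(\delta)$, so the mass of $\delta W_\delta^{(q)}(\diff w)$ near $0$ is never seen and this term in fact tends to $0$ (the paper proves exactly this in its derivative computation). The companion term $\delta M(x;b^*(\delta))\int_0^\infty e^{-\varphi(q)z}W_\delta^{(q)\prime}(z-y)\,\diff z$ vanishes for the same reason, so the entire second line of \eqref{v_J_diff_b_star} contributes nothing in the limit; the piece $-\int_{b^*(\infty)}^x\mathbb{W}^{(q)}(x-y)h(y)\,\diff y$ of \eqref{v_tilde_infty} comes solely from $-\int_0^\infty h(y+b^*(\delta))\mathbb{W}^{(q)}(x-b^*(\delta)-y)\,\diff y$ in the first line. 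Also, in your pole cancellation you dropped the $-\tilde\beta(x-b^*(\infty))$ residual coming from expanding the factor $e^{\Phi_\delta(q)(x-b^*(\delta))}/\Phi_\delta(q)$; for $x<b^*(\infty)$ this residual is precisely the $-\tilde\beta\,\overline{\mathbb{Z}}^{(q)}(x-b^*(\infty))$ term of \eqref{v_tilde_infty}.
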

In order to show this theorem, it is sufficient to show the pointwise convergence. Indeed, by the definition of $\tilde{v}(x; \delta)$ in  \eqref{v_tilde_delta} as the infimum of the NPV and because the set $\Pi_\delta$ is monotonically nondecreasing in $\delta$,  $\tilde{v}(x; \delta)$ is nonincreasing in $\delta$. Hence, pointwise convergence implies the uniform convergence on compacts by Dini's theorem.

We shall first show for $x < b^*(\infty)$ and then extend the result for $x \geq b^*(\infty)$.
\begin{lemma} \label{lemma_convergence_below_b}For any $x < b^*(\infty)$, we have
$\tilde{v}(x; \delta) \xrightarrow{\delta \uparrow \infty} \tilde{v}(x; \infty)$.
\end{lemma}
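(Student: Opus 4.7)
The plan is to sandwich $\tilde{v}(x;\delta)$ between $\tilde{v}(x;\infty)$ and an upper bound that tends to the same limit. The lower bound $\tilde{v}(x;\delta)\ge \tilde{v}(x;\infty)$ is immediate since every $\ell\in\Pi_\delta$ produces an absolutely continuous $L_t=\int_0^t \ell_s\,ds$ lying in $\Pi_\infty$. For the upper bound, I would exploit the explicit structure of the optimizer: by Proposition \ref{convergence_b_star} we have $b^*(\delta)\to b^*(\infty)$, so $x<b^*(\delta)$ for all $\delta$ sufficiently large. Under the optimal refraction strategy for $\tilde v(\,\cdot\,;\delta)$ the controlled process starts at $x$ and coincides with $X^{(\delta)}_t:=Y_t+\delta t$ under $\p_x$ until the first passage time $T_\delta:=\inf\{t>0: X^{(\delta)}_t\ge b^*(\delta)\}$, at which it creeps up to $b^*(\delta)$ without overshoot (spectral negativity of $X^{(\delta)}$). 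The strong Markov property at $T_\delta$ then gives
\begin{equation*}
\tilde{v}(x;\delta)=\E_x\!\left[\int_0^{T_\delta} e^{-qt}\bigl(h(X^{(\delta)}_t)+\tilde\beta\delta\bigr)\,dt\right]+\E_x[e^{-qT_\delta}]\,\tilde{v}(b^*(\delta);\delta).
\end{equation*}

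I would then pass to the limit $\delta\uparrow\infty$ ingredient by ingredient. Using $\E_x[e^{-qT_\delta}]=e^{-\Phi_\delta(q)(b^*(\delta)-x)}$ together with $\Phi_\delta(q)\to 0$, the first factor tends to $1$; combined with $\delta\Phi_\delta(q)\to q$ from \eqref{convergence_product_delta_Phi} this yields
\begin{equation*}
\E_x\!\left[\int_0^{T_\delta} e^{-qt}\tilde\beta\delta\,dt\right]=\frac{\tilde\beta\delta}{q}\bigl(1-e^{-\Phi_\delta(q)(b^*(\delta)-x)}\bigr)\xrightarrow{\delta\uparrow\infty}\tilde\beta(b^*(\infty)-x).
\end{equation*}
The $h$-integral can be written as $\int_{-\infty}^{b^*(\delta)} h(y)\bigl[e^{\Phi_\delta(q)(x-b^*(\delta))}W_\delta^{(q)}(b^*(\delta)-y)-W_\delta^{(q)}(x-y)\bigr]dy$ via the resolvent of $X^{(\delta)}$ killed at $\tau^+_{b^*(\delta)}$, and vanishes in the limit because $W_\delta^{(q)}(u)=O(1/\delta)$ by \eqref{convergence_scale_delta}; Assumption \ref{assump_h} combined with an exponential estimate analogous to \eqref{eq_integrability1} supplies the dominating function.

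For the remaining factor $\tilde{v}(b^*(\delta);\delta)$, I would use the closed form $\tilde{v}(b;\delta)=v_b^{(1)}(b)+\tilde\beta\delta/q$ obtained from \eqref{v_tilde_delta} and \eqref{v_b_expression} at $x=b$. The singular summand $\tilde\beta\delta/q$ is cancelled by the divergent contribution $-\tilde\beta(\varphi(q)-\Phi_\delta(q))/(\Phi_\delta(q)\varphi(q))$ inside $v^{(1)}_{b^*(\delta)}(b^*(\delta))$, and the Taylor expansion $\psi_Y(\Phi_\delta(q))=\psi'_Y(0+)\Phi_\delta(q)+O(\Phi_\delta(q)^2)$ extracts the finite remainder $-\tilde\beta\psi'_Y(0+)/q+\tilde\beta/\varphi(q)$; the remaining integrals converge, via \eqref{convergence_scale_delta}, to the scale-function expression for $\tilde{v}(b^*(\infty);\infty)$ in \eqref{v_tilde_infty}. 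Assembling, $\lim_{\delta\uparrow\infty}\tilde{v}(x;\delta)=\tilde\beta(b^*(\infty)-x)+\tilde{v}(b^*(\infty);\infty)$, and this matches $\tilde{v}(x;\infty)$ for $x<b^*(\infty)$ after simplifying \eqref{v_tilde_infty} using $\mathbb{W}^{(q)}(x-b^*(\infty))=0$ and $\overline{\mathbb{Z}}^{(q)}(x-b^*(\infty))=x-b^*(\infty)$.

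The hard part will be the bookkeeping in this last step: tracking the exact cancellation of the $1/\Phi_\delta(q)\sim \delta/q$ singularity against $\tilde\beta\delta/q$, and establishing dominated convergence for the integral $\int_{-\infty}^0 h(y+b^*(\delta))[\cdots]\,dy$ inside $v_{b^*(\delta)}^{(1)}(b^*(\delta))$, where the diverging prefactor $(\varphi(q)-\Phi_\delta(q))/\Phi_\delta(q)$ is offset by the fact that $\int_0^\infty e^{-\varphi(q)z}W^{(q)\prime}_\delta(z-y)\,dz$ concentrates as $\delta\uparrow\infty$. Exponential moment bounds on $\underline{Y}_{\EQ}$ (as in \eqref{finiteness_laplace_negagtive} applied to $Y$ instead of $X$) together with the convexity-derived tail control in Assumption \ref{assump_h} should supply the required uniform domination.
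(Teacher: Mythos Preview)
Your strategy is correct and ultimately relies on the same analytic ingredients as the paper's proof: the asymptotics $\Phi_\delta(q)\to 0$, $\delta\Phi_\delta(q)\to q$, $\delta W_\delta^{(q)}\to 1$, and a tail-truncation at level $-M$ justified via exponential moments of $\underline Y_{\EQ}$. The organization, however, differs. You insert the strong Markov step at $T_\delta$ and thereby split the computation into (a) the pre-hitting integral and (b) the value $\tilde v(b^*(\delta);\delta)$; the paper instead writes $\tilde v(x;\delta,-M)=v_{b^*(\delta)}^{(1)}(x)+\tilde\beta\delta/q$ directly at the generic point $x<b^*(\delta)$ and takes $\delta\uparrow\infty$ term by term in that single expression. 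Your route is slightly longer because you end up computing $v_{b^*(\delta)}^{(1)}(b^*(\delta))+\tilde\beta\delta/q$ (which is exactly the paper's computation at $x=b^*(\delta)$) \emph{and} separately showing the killed-resolvent integral $\int h(y)[e^{\Phi_\delta(q)(x-b^*(\delta))}W_\delta^{(q)}(b^*(\delta)-y)-W_\delta^{(q)}(x-y)]\,dy\to 0$. The paper absorbs both pieces into one display.

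The point you flag as ``the hard part'' is indeed where the work lies, and your sketch does not yet pin it down. The prefactor $(\varphi(q)-\Phi_\delta(q))/\Phi_\delta(q)\sim\varphi(q)\delta/q$ multiplies $\int_0^\infty e^{-\varphi(q)z}W_\delta^{(q)\prime}(z-y)\,dz$; you need more than ``concentration'' to conclude: one must show that after integrating in $y$ against $|h(y+b^*(\delta))|$ over a \emph{truncated} interval $[-N,0]$, the product still vanishes. The paper does this explicitly by rewriting the inner $z$-integral via Fubini as $\delta\int_0^\infty e^{-\varphi(q)z}(W_\delta^{(q)}(z+N)-W_\delta^{(q)}(z))\,dz$ and then invoking the Laplace transform identity for $W_\delta^{(q)}$ together with $\delta W_\delta^{(q)}\to 1$; see the calculation culminating in \eqref{want_to_be_zero}. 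For the tail $y<-M$ one cannot dominate uniformly in $\delta$ by a single integrable function in the na\"ive way; the paper instead bounds this piece probabilistically by $\E_x[\int_0^\infty e^{-qt}|h(\underline Y_t)|\mathbf 1_{\{Y_t<-M\}}\,dt]$ (using $\underline Y_t\le U_t^{(\delta),b^*(\delta)}$), which is $\delta$-free. Your proposal has the right ingredients but would benefit from making this two-step truncation explicit rather than invoking a single dominated-convergence argument.
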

\begin{proof}
%As discussed above, $\tilde{v}(x; \delta)$ is decreasing in $\delta$. In addition, they are uniformly bounded from below by $\tilde{v}(x; \infty)$ because $\Pi_\delta \subset \Pi_\infty$. In view of this, it is sufficient to show that, for any $\varepsilon > 0$,
%\begin{align}
%\lim_{\delta \uparrow \infty} \tilde{v}(x; \delta) \leq \tilde{v}(x; \infty) + \varepsilon. \label{convergence_epsilon_adjusted}
%\end{align}

By Proposition \ref{convergence_b_star}, we can choose sufficiently large $\underline{\delta}$ such that $x < b^*(\delta)$ for all $\delta \geq \underline{\delta}$. Let $h(-\infty) := \lim_{b \downarrow -\infty} h(b) \in [-\infty, \infty]$, which exists by the convexity of $h$.

Fix any $\varepsilon > 0$.

(i) Suppose $h(-\infty) = \infty$. By the convexity of $h$ (together with Assumption  \ref{assump_h}) and because \eqref{finiteness_laplace_negagtive} holds when $X$ is replaced with $Y$, we can choose a sufficiently small $-M$ such that $h$ is decreasing and positive on $(-\infty,-M]$ and
\begin{align*}
&\E_x \Big[\int_0^\infty  \mathrm{e}^{-qt} |h (\underline{Y}_t)| \mathbf{1}_{\{ Y_t < -M\}}\diff t \Big] = \E_x \Big[\int_0^\infty  \mathrm{e}^{-qt} h (\underline{Y}_t) \mathbf{1}_{\{ Y_t < -M\}}\diff t \Big]< \varepsilon.
\end{align*}
Because $\underline{Y}_t \leq U_t^{(\delta), b^*(\delta)}$ a.s. for any $\delta \geq \underline{\delta}$, \begin{align}
&\sup_{\delta \geq  \underline{\delta}}\E_x \Big[\int_0^\infty  \mathrm{e}^{-qt} |h (U_t^{(\delta), b^*(\delta)})| \mathbf{1}_{\{ U_t^{(\delta), b^*(\delta)} < -M\}}\diff t \Big] < \varepsilon. \label{unif_bound_below_M_U}
\end{align}
%uniformly in $\delta \geq 0$.

(ii) Suppose $h(-\infty) \in (-\infty, \infty)$, then $h$ is bounded on the negative half line and we can choose $-M$ such that \eqref{unif_bound_below_M_U} holds.

(iii) Suppose $h(-\infty) = -\infty$. Then, we can choose $-M$ such that $h$ is negative on $(-\infty, -M]$ and hence $\E_x [\int_0^\infty  \mathrm{e}^{-qt} h (U_t^{(\delta), b^*(\delta)}) \mathbf{1}_{\{ U_t^{(\delta), b^*(\delta)}< -M\}}\diff t ] \leq 0$.
In view of these, it is sufficient to show for all $-M \in \R$
\begin{align*}
\tilde{v}(x; \delta, -M) := \tilde{v}(x; \delta) - \E_x \Big[\int_0^\infty  \mathrm{e}^{-qt} h (U_t^{(\delta), b^*(\delta)}) \mathbf{1}_{\{ U_t^{(\delta), b^*(\delta)} < -M\}}\diff t \Big] \xrightarrow{\delta \uparrow \infty}  \tilde{v}(x; \infty).
\end{align*}
Indeed, this implies for the cases (i) and (ii) that $\lim_{\delta \uparrow \infty} |\tilde{v}(x; \delta) -\tilde{v}(x; \infty)| \leq \varepsilon$ for any $\varepsilon >0$. For the case (iii), it implies $\limsup_{\delta \uparrow \infty} \tilde{v}(x; \delta) \leq \tilde{v}(x; \infty)$ (and  $\liminf_{\delta \uparrow \infty} \tilde{v}(x; \delta) \geq \tilde{v}(x; \infty)$ holds because $\Pi_\delta \subset \Pi_\infty$ in view of \eqref{v_tilde_delta}).

%By Lemma \ref{convergence_b_star}, we can choose $\underline{\delta} > 0$ such that $b^*(\delta) > x$ for all $\delta \geq \underline{\delta}$.
Fix $\delta \geq \underline{\delta}$. We can write
\begin{align*}
&\tilde{v}(x; \delta, -M)
%&=\int_b^\infty (h(y)+ \beta \delta)  r_b(x, y) \diff y + \int_{-\infty}^b h(y) r_b(x,y) \diff y \\
=    \mathrm{e}^{\Phi_\delta(q) (x  - {b^*(\delta)})} \frac {\varphi(q)-\Phi_\delta(q)} {\delta \Phi_\delta(q)} \Big[  \int_0^\infty h(y+{b^*(\delta)})   \mathrm{e}^{- \varphi(q) y} \diff y +   \frac {\beta \delta}  {\varphi(q)} \Big]  - \frac {\beta \delta} q \\ &+ \int_{-M-b^*(\delta)}^0 h(y+{b^*(\delta)}) \Big[  \mathrm{e}^{\Phi_\delta(q)(x-{b^*(\delta)})} \frac {\varphi(q)-\Phi_\delta(q)} {\Phi_\delta(q)} \int_0^\infty  \mathrm{e}^{-\varphi(q)z} W^{(q)\prime}_\delta (z-y) \diff z  - W^{(q)}_\delta(x-{b^*(\delta)}-y) \Big] \diff y.
%v^{(2)}(x; \delta, -M)  &:=\int_{0}^\infty (h(y+b^*(\delta)) + \beta \delta) \Big\{   \mathrm{e}^{-\varphi(q)y}M(x; b^*(\delta)) - \mathbb{W}^{(q)}(x+b^*(\delta)-y) \Big\} \diff y \\
%&+ \delta \int_{-M-b^*(\delta)}^{0} h(y+b^*(\delta)) \Big\{ M(x; b^*(\delta)) \int_{0}^\infty e^{-\varphi(q) z} W^{(q)\prime}(z-y) \diff z \\ & \qquad - \int_{y+b^*(\delta)}^x \mathbb{W}^{(q)}(x-z) W^{(q)\prime} (z-y+b^*(\delta)-y) \diff z \Big\} \diff y.
\end{align*}
%In addition, because $\underline{X}$
%
%We choose $\epsilon$ to be $\int_{-\infty}^{-M} h(y) ...$ which decreases in $\delta$ where $-M$ is such that $h$ is decreasing and positive below $-M$.
%
%
%Let us consider
%\begin{align*}
%&\E_x \Big[\int_0^\infty e^{-qt} h (U_t^b ) 1_{\{ U_t^{b^*} < -M\}}\diff t \Big] \\
%&=  \int_{-\infty}^{-M-b^*(\delta)} h(y+{b^*(\delta)}) \Big[ e^{\Phi_\delta(q)(x-{b^*(\delta)})} \frac {\varphi(q)-\Phi_\delta(q)} {\Phi_\delta(q)} \int_0^\infty e^{-\varphi(q)z} W^{(q)\prime}_\delta (z-y) \diff z  - W^{(q)}_\delta(x-{b^*(\delta)}-y) \Big] \diff y.
%\end{align*}
%Define
%\begin{align*}
%v_{b^*(\delta)}^{(1)}(x; \delta)
%%&=\int_b^\infty (h(y)+ \beta \delta)  r_b(x, y) \diff y + \int_{-\infty}^b h(y) r_b(x,y) \diff y \\
%&:=   e^{\Phi_\delta(q) (x  - {b^*(\delta)})} \frac {\varphi(q)-\Phi_\delta(q)} {\delta \Phi_\delta(q)} \Big[  \int_0^\infty h(y+{b^*(\delta)})  e^{- \varphi(q) y} \diff y +   \frac {\beta \delta}  {\varphi(q)} \Big]  \\ &+ \int_{-\infty}^0 h(y+{b^*(\delta)}) \Big[ e^{\Phi_\delta(q)(x-{b^*(\delta)})} \frac {\varphi(q)-\Phi_\delta(q)} {\Phi_\delta(q)} \int_0^\infty e^{-\varphi(q)z} W^{(q)\prime}_\delta (z-y) \diff z \\ & \qquad - W^{(q)}_\delta(x-{b^*(\delta)}-y) \Big] \diff y - \frac {\beta \delta} q.
%\end{align*}
Regarding the first line on the right hand side, it equals
\begin{align*}
%& \mathrm{e}^{\Phi_\delta(q)(x-{b^*(\delta)})}\frac {\varphi(q)-\Phi_\delta(q)} {\delta \Phi_\delta(q)} \Big[  \int_0^\infty h(y+b^*(\delta))   \mathrm{e}^{- \varphi(q) y} \diff y +   \frac {\beta \delta}  {\varphi(q)} \Big] - \frac {\beta \delta} q \\
& \mathrm{e}^{\Phi_\delta(q)(x-{b^*(\delta)})} \Big[ \frac {\varphi(q)-\Phi_\delta(q)} {\delta \Phi_\delta(q)}  \int_0^\infty h(y+b^*(\delta))   \mathrm{e}^{- \varphi(q) y} \diff y +    \beta \Big( \frac 1 {\Phi_\delta(q)} - \frac {\delta} q - \frac 1 {\varphi(q)}  \Big) \Big]  - \frac {\beta \delta} q (1- \mathrm{e}^{\Phi_\delta(q)(x-{b^*(\delta)})})\\
%&=  \frac {\varphi(q)-\Phi_\delta(q)} {\delta \Phi_\delta(q)}  \int_0^\infty h(y+{b^*(\delta)})   \mathrm{e}^{- \varphi(q) y} \diff y + \beta \Big( \frac 1 {\Phi_\delta(q)} - \frac {\delta} q - \frac 1 {\varphi(q)} - \frac {\delta} q (1- \mathrm{e}^{\Phi_\delta(q)(x-{b^*(\delta)})}) \Big) \\
&\xrightarrow{\delta \uparrow \infty}  \tilde{v}(x; \infty)   = \beta \left( x-{b^*(\infty)} + \frac {\psi_Y'(0+)} q  \right) +
 \frac {\varphi(q)} q \int_0^\infty  \mathrm{e}^{-\varphi(q) y} h(y+b^*(\infty)) \diff y - \frac  {\beta} {\varphi(q)}.
\end{align*}
To see how the convergence holds, note, by \eqref{convergence_product_delta_Phi},
\begin{align}
 \mathrm{e}^{\Phi_\delta(q)(x-{b^*(\delta)})} \frac {\varphi(q)-\Phi_\delta(q)} {\delta \Phi_\delta(q)} \xrightarrow{\delta \uparrow \infty} \frac {\varphi(q)} {q}. \label{convergence_Phi_fraction}
\end{align}
In addition,
\begin{align*}
\frac 1 {\Phi_\delta(q)} - \frac {\delta} q = \frac {q-\delta \Phi_\delta(q)} {\Phi_\delta(q) q} =  \frac {\psi_Y(\Phi_\delta(q))} {\Phi_\delta(q) q} \xrightarrow{\delta \uparrow \infty} \frac {\psi_Y'(0+)} q
\end{align*}
and the Taylor expansion and the convergence of $b^* (\delta)$ gives
\begin{align*}
\delta (1-  \mathrm{e}^{\Phi_\delta(q)(x-b^*(\delta))}) = \delta [(x-b^*(\delta)) \Phi_\delta(q) + o(\Phi_\delta(q))] \xrightarrow{\delta \uparrow \infty} q (x-b^*(\infty)).
\end{align*}

On the other hand, $-M-b^*(\delta)$ converges and hence is bounded in $\delta$ from below by, say $-N < 0$; hence it is now left to show that
\begin{multline} \label{integral_from_minus_N_0}
 \int_{-N}^0 |h(y+b^*(\delta))| \Big[  \mathrm{e}^{\Phi_\delta(q)(x-b^*(\delta))} \frac {\varphi(q)-\Phi_\delta(q)} {\Phi_\delta(q)} \int_0^\infty  \mathrm{e}^{-\varphi(q)z} W_\delta^{(q)\prime} (z-y) \diff z 
 - W_\delta^{(q)}(x-b^*(\delta)-y) \Big] \diff y
\xrightarrow{\delta \uparrow \infty} 0.
\end{multline}
%\green{[As $\Big[  \mathrm{e}^{\Phi_\delta(q)(x-{b^*(\delta)})} \frac {\varphi(q)-\Phi_\delta(q)} {\Phi_\delta(q)} \int_0^\infty  \mathrm{e}^{-\varphi(q)z} W^{(q)\prime}_\delta (z-y) %\diff z  - W^{(q)}_\delta(x-{b^*(\delta)}-y) \Big]$ may fail to be positive when $x > b^*$ (not here, but the case $x > b^*$ is using this), hence I changed this way]}
Here, $\sup_{-N \leq y \leq 0, \delta \geq \underline{\delta}} |h'(y+{b^*(\delta)})| < \infty$ and
\begin{align}
\label{want_to_be_zero_1}
\begin{split}
0 &\leq \int_{-N}^0  \Big[  \mathrm{e}^{\Phi_\delta(q)(x-{b^*(\delta)})} \frac {\varphi(q)-\Phi_\delta(q)} {\Phi_\delta(q)} \int_0^\infty  \mathrm{e}^{-\varphi(q)z} W^{(q)\prime}_\delta (z-y) \diff z   \Big] \diff y \\
%&=    \mathrm{e}^{\Phi_\delta(q)(x-{b^*(\delta)})} \frac {\varphi(q)-\Phi_\delta(q)} {\Phi_\delta(q)} \int_0^\infty  \mathrm{e}^{-\varphi(q)z} \int_{-N}^0 W^{(q)\prime}_\delta (z-y) \diff y \diff z  \\
&=   \mathrm{e}^{\Phi_\delta(q)(x-{b^*(\delta)})} \frac {\varphi(q)-\Phi_\delta(q)} {\delta \Phi_\delta(q)}\delta  \int_0^\infty  \mathrm{e}^{-\varphi(q)z}  ( W^{(q)}_\delta (z+N)- W^{(q)}_\delta (z) )\diff z.
\end{split}
\end{align}
Here we recall the convergence  \eqref{convergence_Phi_fraction}. In addition,
\begin{align} \label{want_to_be_zero}
\begin{split}
&\delta  \int_0^\infty  \mathrm{e}^{-\varphi(q)z}  ( W^{(q)}_\delta (z+N)- W^{(q)}_\delta (z) )\diff z \\
% &= e^{\varphi(q) N}\int_{N}^\infty e^{-\varphi(q)z} \delta W^{(q)}_\delta (z) \diff z - \frac {\delta} {\psi_Y(\varphi(q)) + \delta \varphi(q) - q} \\
&= \delta  \mathrm{e}^{\varphi(q) N}\Big( \int_0^\infty  \mathrm{e}^{-\varphi(q)z} W^{(q)}_\delta (z) \diff z - \int^{N}_0  \mathrm{e}^{-\varphi(q)z} W^{(q)}_\delta (z) \diff z\Big)  - \delta  \int_0^\infty  \mathrm{e}^{-\varphi(q)z}  W^{(q)}_\delta (z) \diff z \\
&= ( \mathrm{e}^{\varphi(q) N}-1) \frac {\delta} {\psi_Y(\varphi(q)) + \delta \varphi(q) - q} -  \mathrm{e}^{\varphi(q) N} \int^{N}_0  \mathrm{e}^{-\varphi(q)z} \delta W^{(q)}_\delta (z) \diff z
\xrightarrow{\delta \uparrow \infty} 0.
\end{split}
\end{align}
To see how the convergence holds, $\delta \sup_{0 \leq z \leq N}W^{(q)}_\delta (z) = \delta W^{(q)}_\delta (N)$, which is bounded in $\delta$
by \eqref{convergence_scale_delta}.  Hence
dominated convergence and \eqref{convergence_scale_delta} give $\lim_{\delta \uparrow \infty}\int^{N}_0  \mathrm{e}^{-\varphi(q)z} \delta W^{(q)}_\delta (z) \diff z = \int^{N}_0  \mathrm{e}^{-\varphi(q)z} \diff z = (1 -  \mathrm{e}^{-\varphi(q) N}) / \varphi(q)$. On the other hand, $W^{(q)}_\delta(x-{b^*(\delta)}-y) \rightarrow 0$ uniformly on $y \in [-N, 0]$ by the convergence of $b^*(\delta)$ and \eqref{convergence_scale_delta}.
Hence \eqref{integral_from_minus_N_0} holds, as desired.

%
%This shows that
%\begin{align*}
%\tilde{v}(x; \delta, -M) \rightarrow   \frac {\varphi(q)} {q}  \int_0^\infty h(y+{b^*(\infty)})  e^{- \varphi(q) y} \diff y + \frac \beta q \Big( {\psi_Y'(0+)}  - \frac 1 {\varphi(q)} \Big) + \beta (x-{b^*(\infty)}).
%\end{align*}
%
%Regarding the first term of $v^{(2)}(x; \delta, -M)$,
%\begin{multline*}
%\int_{0}^\infty h(y+ b^*(\delta)) \Big\{  e^{-\varphi(q)y}M(x; b^*(\delta)) - \mathbb{W}^{(q)}(x- b^*(\delta)-y) \Big\} \diff y \\
%\xrightarrow{\delta \uparrow \infty}\int_{0}^\infty h(y+ b^*(\infty)) \Big\{  e^{-\varphi(q)y}\varphi(q)  \overline{\mathbb{W}}^{(q)}(x- b^*(\infty)) - \mathbb{W}^{(q)}(x- b^*(\infty)-y) \Big\} \diff y.
%\end{multline*}
%Here
%
%and
%\begin{align*}
%I_\infty(b) := \int_0^\infty  h'(y+b)  e^{- \varphi(q) y}\diff y   -  \beta \frac {q} {\varphi(q)} \\
%= h(b) + \int_0^\infty \varphi(q) h(y+b) e^{-\varphi(q)y} \diff y - \beta \frac {q} {\varphi(q)}
%\end{align*}
%First
%
%On the other hand,
%\begin{align*}
%v_b^{(2)}(x) &:=\int_{0}^\infty (h(y+b) + \beta \delta) \Big\{  e^{-\varphi(q)y}M(x; b) - \mathbb{W}^{(q)}(x-b-y) \Big\} \diff y \\
%&+ \delta \int_{-\infty}^{0} h(y+b) \Big\{ M(x; b) \int_{0}^\infty e^{-\varphi(q) z} W^{(q)\prime}(z-y) \diff z - \int_{b}^x \mathbb{W}^{(q)}(x-z) W^{(q)\prime} (z-b-y) \diff z \Big\} \diff y.
%\end{align*}

\end{proof}

In order to extend the result to the case $x \geq b^*(\infty)$, we shall show that the derivative converges.  By \eqref{v_prime_equals_expectation},
\begin{align}
\tilde{v}'(x; \delta) = \E_x \Big[ \int_0^\infty  \mathrm{e}^{-qt} h'(U_t^{(\delta), b^* (\delta)}) \diff t \Big], \quad x \in \R.  \label{v_b_derivative_for_delta}
\end{align}
On the other hand, by \cite{Yamazaki_2013}, the derivative of $\tilde{v}(x; \infty)$ simplifies and
\begin{align*}
\tilde{v}'(x; \infty)=\beta - \int_{b^*(\infty)}^x (h'(y)-\beta q)  \mathbb{W}^{(q)}(x-y) \diff y, \quad x \in \R.
\end{align*}
\begin{lemma} \label{lemma_convergence_derivative_v}
For every $x \in \R$, we have $\tilde{v}'(x; \delta) \xrightarrow{\delta \uparrow \infty} \tilde{v}'(x; \infty)$ for all $x \neq b^*(\infty)$.
\end{lemma}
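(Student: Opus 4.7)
The plan is to combine the probabilistic representation \eqref{v_b_derivative_for_delta} for $\tilde v'(x;\delta)$ with the resolvent representation that falls out of Lemma \ref{lemma_u_b}, and then pass to the limit using the convergences already collected in this section. Because $b^*(\delta)$ is finite for all sufficiently large $\delta$ and satisfies $I(b^*(\delta))=0$, Lemma \ref{lemma_u_b} forces $u^{(1)}_{b^*(\delta)}\equiv u^{(2)}_{b^*(\delta)}\equiv 0$; combined with \eqref{u_b_k_def}--\eqref{u_b_sum} this yields the explicit identity
\[
\tilde v'(x;\delta)=\int_{-\infty}^\infty h'(y)\,r^{(1)}_{b^*(\delta)}(x,y)\,\diff y+\mathbf 1_{\{x>b^*(\delta)\}}\int_{-\infty}^\infty h'(y)\,r^{(2)}_{b^*(\delta)}(x,y)\,\diff y,
\]
with $r^{(k)}_{b^*(\delta)}$ the resolvent densities from \eqref{r_expression}. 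The interchange of derivative and integral here is legitimized by the polynomial tail growth of $|h'|$ (inherited from Assumption \ref{assump_h} through the convexity of $h$) together with the exponential tail bound \eqref{finiteness_laplace_negagtive} applied to $Y$ in place of $X$, using Assumption \ref{assump_levy_measure}(2).

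Proposition \ref{convergence_b_star} and the assumption $x\neq b^*(\infty)$ ensure the indicator $\mathbf 1_{\{x>b^*(\delta)\}}$ is eventually constant, so I would treat the two cases $x<b^*(\infty)$ and $x>b^*(\infty)$ separately. In each case, I would shift integration by $b^*(\delta)$, plug in the explicit expressions from \eqref{r_expression}, apply the integration-by-parts identity \eqref{some_result_int_by_parts}, and pass to the limit using: $e^{\Phi_\delta(q)(x-b^*(\delta))}\to 1$; the convergence $(\varphi(q)-\Phi_\delta(q))/(\delta\Phi_\delta(q))\to\varphi(q)/q$ coming from \eqref{convergence_product_delta_Phi}; $\delta W^{(q)}_\delta(w)\to 1$ from \eqref{convergence_scale_delta}, together with its integrated version $\delta\int_0^a e^{-\varphi(q)w}W^{(q)}_\delta(w)\,\diff w\to (1-e^{-\varphi(q)a})/\varphi(q)$ for each $a>0$; and the key root identity $\int_0^\infty h'(y+b^*(\infty))e^{-\varphi(q)y}\,\diff y=\beta q/\varphi(q)$, which follows from $I_\infty(b^*(\infty))=0$ and \eqref{def_I_infty} (together with Lemma \ref{confergence_I_b}). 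For $x<b^*(\infty)$ the limit collapses to $\beta=\tilde v'(x;\infty)$, whereas for $x>b^*(\infty)$ the $\mathbb W^{(q)}$-terms already present in $r^{(2)}_{b^*(\delta)}$ survive the limit and, combined with the $W^{(q)}_\delta$-asymptotics and the root identity, reproduce $\beta-\int_{b^*(\infty)}^x(h'(y)-\beta q)\mathbb W^{(q)}(x-y)\,\diff y=\tilde v'(x;\infty)$.

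The main obstacle is controlling the apparent divergence of the factor $(\varphi(q)-\Phi_\delta(q))/\Phi_\delta(q)\to\infty$ that appears inside both $r^{(1)}_{b^*(\delta)}$ and $r^{(2)}_{b^*(\delta)}$. Each divergent contribution is balanced by a companion term of order $W^{(q)}_\delta\sim 1/\delta$, and the exact cancellation can be read off only after using $I(b^*(\delta))=0$ to link the various integrals against $h'$ on the positive and negative half-lines. The dominated-convergence bounds needed to pass the limit under the integrals on the whole real line repeat the exponential-tail argument from the proof of Lemma \ref{confergence_I_b}: fix $0<\varepsilon<\bar\theta$ as in Assumption \ref{assump_levy_measure}(2) and dominate the $\delta$-dependent scale-function integrals uniformly by a constant multiple of $e^{\varepsilon y}$ on $(-\infty,0)$, while on $(0,\infty)$ the factor $e^{-\varphi(q)y}$ provides the necessary decay.
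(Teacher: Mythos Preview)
Your approach is essentially the same as the paper's: both decompose $\tilde v'(x;\delta)$ as $\int h'(y)\,r^{(1)}_{b^*(\delta)}(x,y)\,\diff y+\mathbf 1_{\{x>b^*(\delta)\}}\int h'(y)\,r^{(2)}_{b^*(\delta)}(x,y)\,\diff y$, then pass to the limit using \eqref{convergence_product_delta_Phi}, \eqref{convergence_scale_delta}, Proposition~\ref{convergence_b_star}, and the root identity $I_\infty(b^*(\infty))=0$ to obtain $\beta$ for $x<b^*(\infty)$ and $\beta-\int_{b^*(\infty)}^x(h'(y)-\beta q)\mathbb W^{(q)}(x-y)\,\diff y$ for $x>b^*(\infty)$.

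Two small remarks. First, your claim that ``Lemma~\ref{lemma_u_b} forces $u^{(1)}_{b^*(\delta)}\equiv u^{(2)}_{b^*(\delta)}\equiv 0$'' is not quite right: when $I(b)=0$, \eqref{v_b_x_derivative_b}--\eqref{v_b_x_derivative_b_2} give $u^{(1)}_{b}(x)=-h(b)W^{(q)}(x-b)$ and $u^{(2)}_{b}(x)=h(b)W^{(q)}(x-b)$ for $x>b$, which need not vanish individually. What you need is only that their \emph{sum} vanishes (Proposition~\ref{prop_derivative_all}), and this already yields your displayed identity via \eqref{u_b_k_def}--\eqref{u_b_sum}; so the argument survives unchanged. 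Second, the paper handles the left tail by an $\varepsilon$-truncation at $-M$ (using $\underline Y_t\le U_t^{(\delta),b^*(\delta)}$ and monotonicity of $|h'|$ for large negative arguments) rather than your direct dominated-convergence bound $e^{\varepsilon y}$; both work, and your route is slightly more direct but requires the uniform-in-$\delta$ majorant you describe from the proof of Lemma~\ref{confergence_I_b}.
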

\begin{proof}
Fix any $\underline{\delta} > 0$. We first show, for any $\varepsilon > 0$, that we can choose sufficiently small $-M \in \R$ such that
\begin{align}
\sup_{\delta > \underline{\delta}}\E_x \Big[\int_0^\infty  \mathrm{e}^{-qt} |h' (U_t^{(\delta), b^*(\delta)})| \mathbf{1}_{\{ U_t^{(\delta), b^*(\delta)} < -M\}}\diff t \Big] < \varepsilon. \label{bound_below_M_2}
\end{align}
(i) Suppose $h'(-\infty) < 0$. By the convexity of $h$ and \eqref{eq_integrability1}, we can choose a sufficiently small $-M$ such that, on $(-\infty,-M]$, $h'$ is negative (and hence $|h'|$ is decreasing) and $\E_x [\int_0^\infty  \mathrm{e}^{-qt} |h' (Y_t ) | \mathbf{1}_{\{ Y_t < -M\}}\diff t ] < \varepsilon$.
By $Y \leq U^{(\delta), b^*(\delta)}$ for all $\delta \geq \underline{\delta}$, we have \eqref{bound_below_M_2}.
(ii) Suppose $h'(-\infty) \geq 0$, then the convexity of $h$ implies that $h'$ is uniformly bounded by a constant on the negative half line.  Again, by $Y \leq U^{(\delta), b^*(\delta)}$, \eqref{bound_below_M_2} holds for sufficiently small $-M$.

For any $\varepsilon > 0$, with $-M$ such that  \eqref{bound_below_M_2} holds, the relation \eqref{v_b_derivative_for_delta} implies
\begin{align*}
&\sup_{\delta > \underline{\delta}}\Big| \tilde{v}'(x; \delta) - \E_x \Big[\int_0^\infty  \mathrm{e}^{-qt} h' (U_t^{(\delta), b^*(\delta)}) \mathbf{1}_{\{ U_t^{(\delta), b^*(\delta)} \geq -M\}}\diff t \Big] \Big| < \varepsilon.
\end{align*}
Hence, for the proof of this lemma, it is sufficient to show that for all small $-M \in \R$
\begin{align*}
\tilde{v}'(x; \delta, -M) := \E_x \Big[\int_0^\infty  \mathrm{e}^{-qt} h' (U_t^{(\delta), b^*(\delta)}) \mathbf{1}_{\{ U_t^{(\delta), b^*(\delta)} \geq -M\}}\diff t \Big] \xrightarrow{\delta \uparrow \infty} \tilde{v}'(x; \infty).
\end{align*}
% because $\varepsilon > 0$ can be chosen arbitrarily, this implies the convergence.

 By \eqref{r_expression} and  \eqref{v_b_derivative_for_delta}, we can write $\tilde{v}'(x; \delta, -M) = \tilde{v}^{(1)\prime}(x; \delta, -M) + \mathbf{1}_{\{x > b^*(\delta)\}}\tilde{v}^{(2)\prime}(x; \delta, -M)$
where
\begin{align*}
&\tilde{v}^{(1)\prime}(x; \delta, -M)
%&=\int_b^\infty (h(y)+ \beta \delta)  r_b(x, y) \diff y + \int_{-\infty}^b h(y) r_b(x,y) \diff y \\
:=    \mathrm{e}^{\Phi_\delta(q) (x  - {b^*(\delta)})} \frac {\varphi(q)-\Phi_\delta(q)} {\delta \Phi_\delta(q)} \int_0^\infty h'(y+{b^*(\delta)})   \mathrm{e}^{- \varphi(q) y} \diff y   \\ &+ \int_{-M-b^*(\delta)}^0 h'(y+{b^*(\delta)}) \Big[  \mathrm{e}^{\Phi_\delta(q)(x-{b^*(\delta)})} \frac {\varphi(q)-\Phi_\delta(q)} {\Phi_\delta(q)} \int_0^\infty  \mathrm{e}^{-\varphi(q)z} W^{(q)\prime}_\delta (z-y) \diff z  - W^{(q)}_\delta(x-{b^*(\delta)}-y) \Big] \diff y, \\
&\tilde{v}^{(2)\prime}(x; \delta, -M)  :=\int_{0}^\infty h'(y+b^*(\delta)) \Big\{   \mathrm{e}^{-\varphi(q)y}M_\delta(x; b^*(\delta)) - \mathbb{W}^{(q)}(x+b^*(\delta)-y) \Big\} \diff y \\
&+ \delta \int_{-M-b^*(\delta)}^{0} h'(y+b^*(\delta)) \Big\{ M_\delta(x; b^*(\delta)) \int_{0}^\infty  \mathrm{e}^{-\varphi(q) z} W_\delta^{(q)\prime}(z-y) \diff z \\ & \qquad - \int_{b^*(\delta)}^x \mathbb{W}^{(q)}(x-z) W_\delta^{(q)\prime} (z-y-b^*(\delta)) \diff z \Big\} \diff y,
\end{align*}
where $M_\delta(x;b^*(\delta)) :=  (\varphi(q)-\Phi_\delta(q))   \mathrm{e}^{-\Phi_\delta(q) b^* (\delta)} \int_{b^*(\delta)}^x  \mathrm{e}^{\Phi_\delta(q)z} \mathbb{W}^{(q)}(x-z) \diff z$.

Regarding $\tilde{v}^{(1)\prime}(x; \delta, -M)$, by the choice of $b^*(\infty)$ such that \eqref{def_I_infty} vanishes,
\begin{align*}
 \mathrm{e}^{\Phi_\delta(q) (x  - {b^*(\delta)})} \frac {\varphi(q)-\Phi_\delta(q)} {\delta \Phi_\delta(q)} \int_0^\infty h'(y+{b^*(\delta)})   \mathrm{e}^{- \varphi(q) y} \diff y \xrightarrow{\delta \uparrow \infty} \frac {\varphi(q)} {q} \int_0^\infty h'(y+{b^*(\infty)})   \mathrm{e}^{- \varphi(q) y} \diff y = \beta.
\end{align*}
The remaining term of $\tilde{v}^{(1)\prime}(x; \delta, -M)$ vanishes in the limit.  Indeed,  we can choose sufficiently small $-N < 0$ such that $-M - b^*(\delta) \geq -N$ uniformly in $\delta \geq \underline{\delta}$. Clearly, $\sup_{-N \leq y \leq 0, \delta > \underline{\delta}}|h'(y+{b^*(\delta)})| < \infty$, and we have shown that \eqref{want_to_be_zero_1} vanishes in the limit by \eqref{want_to_be_zero}. Hence,
\begin{align}
\tilde{v}^{(1)\prime}(x; \delta, -M) \xrightarrow{\delta \uparrow \infty} \beta. \label{convergence_v_1_prime}
\end{align}

It is now left to show the convergence $\tilde{v}^{(2)\prime}(x; \delta, -M) 1_{\{ x > b^*(\delta)\}} \xrightarrow{\delta \uparrow \infty}\tilde{v}^{(2)\prime}(x; \infty, -M) 1_{\{ x > b^*(\infty)\}}$.
 First, Proposition \ref{convergence_b_star} gives $1_{\{x > b^*(\delta)\}} \xrightarrow{\delta \uparrow \infty} 1_{\{x > b^*(\infty)\}}$ for $x \neq b^*(\infty)$. Hence, it remains  to show that $\tilde{v}^{(2)\prime}(x; \delta, -M)$ converges to $\tilde{v}^{(2)\prime}(x; \infty, -M)$ for $x > b^*(\infty)$.

 By Lemma  \ref{convergence_b_star}, we can set $\underline{\delta}$ large enough such that $x > b^*(\delta)$ for all $\delta \geq \underline{\delta}$.  For any such $\delta \geq \underline{\delta}$, we have the convergence
\begin{align*}
&\int_{0}^\infty h'(y+b^*(\delta)) \Big\{   \mathrm{e}^{-\varphi(q)y}M_\delta(x; b^*(\delta)) - \mathbb{W}^{(q)}(x-b^*(\delta)-y) \Big\} \diff y \\
&\xrightarrow{\delta \uparrow \infty}\int_{0}^\infty h'(y+b^*(\infty)) \Big\{   \mathrm{e}^{-\varphi(q)y}\varphi(q)  \int_0^{x-b^*(\infty)}\mathbb{W}^{(q)}(z) \diff z - \mathbb{W}^{(q)}(x-b^*(\infty)-y) \Big\} \diff y \\
&=-  \int_{b^*(\infty)}^x (h'(y)-\beta q)  \mathbb{W}^{(q)}(x-y) \diff y,
\end{align*}
where we used the choice of $b^*(\infty)$ that makes \eqref{def_I_infty} zero.
We shall now show that the remaining term vanishes in the limit and hence
\begin{align}
\tilde{v}^{(2)\prime}(x; \delta) \xrightarrow{\delta \uparrow \infty} - \int_{b^*(\infty)}^x (h'(y)-\beta q)  \mathbb{W}^{(q)}(x-y) \diff y.  \label{convergence_v_2_prime}
\end{align}
For any $\delta \geq \underline{\delta}$, Fubini's theorem gives
\begin{align*}
&\delta \int_{-M-b^*(\delta)}^{0} \Big\{ M_\delta(x; b^*(\delta)) \int_{0}^\infty  \mathrm{e}^{-\varphi(q) z} W_\delta^{(q)\prime}(z-y) \diff z  - \int_{b^*(\delta)}^x \mathbb{W}^{(q)}(x-z) W_\delta^{(q)\prime} (z-b^*(\delta)-y) \diff z \Big\} \diff y \\
&= \delta  \Big\{ M(x; b^*(\delta)) \int_{0}^\infty  \mathrm{e}^{-\varphi(q) z} (W_\delta^{(q)}(z+M + b^*(\delta)) - W_{\delta}^{(q)}(z))\diff z \\ & \qquad - \int_{b^*(\delta)}^x \mathbb{W}^{(q)}(x-z) (W_\delta^{(q)} (z+M) - W_\delta^{(q)} (z-b^*(\delta))) \diff z \Big\}.
%&\leq \delta  \Big\{ M_\delta(x; b^*(\delta)) \int_{0}^\infty  \mathrm{e}^{-\varphi(q) z} (W_\delta^{(q)}(z+M + b^*(\delta)) - W_\delta^{(q)}(z))\diff z \\ & \qquad + \mathbb{W}^{(q)}(x-b^*(\delta)) \int_{b^*(\delta)}^x  (W_\delta^{(q)} (z+M) - W_\delta^{(q)} (z-b^*(\delta))) \diff z \Big\} \diff y.
\end{align*}
Here $\delta \int_{0}^\infty  \mathrm{e}^{-\varphi(q) z} (W_\delta^{(q)}(z+M + b^*(\delta)) - W_{\delta}^{(q)}(z))\diff z \leq \delta \int_{0}^\infty  \mathrm{e}^{-\varphi(q) z} (W_\delta^{(q)}(z+M + x) - W^{(q)}_{\delta}(z))\diff z$,
which vanishes as $\delta \uparrow \infty$ by \eqref{want_to_be_zero}.  On the other hand,  with $\underline{b}^*:= \inf_{\delta \geq \underline{\delta}} b^*(\delta) > -\infty$,
we have $\sup_{b^*(\delta) \leq z \leq x}\mathbb{W}^{(q)}(x-z) = \mathbb{W}^{(q)}(x-b^*(\delta)) \leq \mathbb{W}^{(q)}(x-\underline{b}^*) < \infty$, and
\begin{align*}
0 \leq \delta \int_{b^*(\delta)}^x  (W_\delta^{(q)} (z+M) - W_\delta^{(q)} (z-b^*(\delta))) \diff z =   \delta \int_{0}^{x-b^*(\delta)}  (W_\delta^{(q)} (z+M + b^*(\delta)) - W_\delta^{(q)} (z)) \diff z  \\ \leq \delta \int_{0}^{x- \underline{b}^*}  (W_\delta^{(q)} (z+M +x) - W_\delta^{(q)} (z)) \diff z,
\end{align*}
which vanishes in the limit as $\delta \uparrow \infty$ by dominated convergence and \eqref{convergence_scale_delta}. Hence, \eqref{convergence_v_2_prime} holds. 
%Combining  \eqref{convergence_v_1_prime} and  \eqref{convergence_v_2_prime}, we have the claim.
\end{proof}

\begin{proof}[Proof of Theorem \ref{theorem_convergence}]
%In order to complete the proof of this theorem, it is sufficient to show that the pointwise convergence also holds for $x \geq b^*(\infty)$. This and Lemma \ref{lemma_convergence_below_b} show the pointwise convergence on $\R$; because $\tilde{v}(x; \delta)$ is monotone in $\delta$, Dini's theorem then completes the proof.

%By the definition of $\tilde{v}(x; \delta)$ as an infimum of the NPV and because the set $\Pi_\delta$ is monotonically increasing in $\delta$, $\tilde{v}(x; \delta)$ is monotonically decreasing.

In order to complete the proof of Theorem \ref{theorem_convergence}, it is left to show that $\tilde{v}(x; \delta) \xrightarrow{\delta \uparrow \infty} \tilde{v}(x; \infty)$ for any $x \geq b^*(\infty)$.

Fix any $\underline{x} < b^*(\infty)$.  We have $\tilde{v}(x; \delta) = \tilde{v}(\underline{x}; \delta) + \int_{\underline{x}}^x \tilde{v}'(z; \delta) \diff z$ for all $x \geq \underline{x}$.
% = \tilde{v}(b^*(\delta); \delta) + \int_{b^*(\infty)}^x \tilde{v}(z; \delta) \diff z + \int^{b^*(\delta)}_{b^*(\infty)} \tilde{v}(z; \delta) \diff z.
By Lemma \ref{lemma_convergence_below_b}, we have $\tilde{v}(\underline{x}; \delta) \xrightarrow{\delta \uparrow \infty} \tilde{v}(\underline{x}; \infty)$.  In addition, for all $\underline{x} \leq z \leq x$, by the convexity of $\tilde{v}$ as in Lemma  \ref{lemma_convexity_v}, $\tilde{v}'(z; \delta) \leq \tilde{v}'(x; \delta)$,
which is bounded in $\delta$ by Lemma \ref{lemma_convergence_derivative_v}. Hence, Fatou's lemma gives
$\limsup_{\delta \uparrow \infty}\int_{\underline{x}}^x \tilde{v}'(z; \delta) \diff z \leq \int_{\underline{x}}^x \tilde{v}'(z; \infty) \diff z = \tilde{v}(x; \infty) - \tilde{v}(\underline{x}; \infty)$.
This shows $\limsup_{\delta \uparrow \infty}\tilde{v}(x; \delta) \leq \tilde{v}(x; \infty)$.
Because $\tilde{v}(x; \delta) \geq \tilde{v}(x; \infty)$, we have $\tilde{v}(x; \delta) \xrightarrow{\delta \uparrow \infty} \tilde{v}(x; \infty)$ for any $x \geq b^*(\infty)$, as desired.

%  because $v(z; \delta)$ is monotonically decreasing in $\delta$  and is bounded from above by $v(z; \underline{\delta})$ for any $\delta \geq \underline{\delta}$, monotone convergence and Lemma \ref{lemma_convergence_derivative_v} give
%\begin{align*}
%\lim_{\delta \uparrow \infty}\int_{\underline{x}}^x \tilde{v}'(z; \delta) \diff z = \int_{\underline{x}}^x \tilde{v}'(z; \infty) \diff z = \tilde{v}(x; \infty) - \tilde{v}(\underline{x}; \infty),
%\end{align*}
%showing the convergence $\tilde{v}(x; \delta) \xrightarrow{\delta \uparrow \infty} \tilde{v}(x; \infty)$ for any $x \geq b^*(\infty)$, as desired.
\end{proof}

\section{Examples under phase-type \lev processes} \label{section_numerics}

In this section, we focus on the case the processes $X$ (and $Y$) have i.i.d.\ phase-type distributed jumps \cite{Asmussen_2004} of the form
$X_t  - X_0= \tilde{\gamma} t+\sigma B_t - \sum_{n=1}^{N_t} Z_n$, $0\le t <\infty$, %\label{X_phase_type}
%\end{equation}
for some $\tilde{\gamma} \in \R$ and $\sigma \geq 0$.  Here $B=\{B_t; t\ge 0\}$ is a standard Brownian motion, $N=\{N_t; t\ge 0\}$ is a Poisson process with arrival rate $\kappa$, and  $Z = \left\{ Z_n; n = 1,2,\ldots \right\}$ is an i.i.d.\ sequence of phase-type-distributed random variables with representation $(m,{\bm \alpha},{\bm T})$; see \cite{Asmussen_2004}. These processes are assumed mutually independent.  The class of processes of this type is important because it can approximate any spectrally negative \lev process in law (see \cite{Asmussen_2004} and \cite{Egami_Yamazaki_2010_2}). 

The Laplace exponent \eqref{laplace_spectrally_positive} of $X$ is then a rational function and hence the scale function (and hence their resolvent measures \eqref{resolvent_measure} as well) admit analytical expressions; straightforward computation yields the optimal threshold level $b^*$ as well as the value function $v_{b^*}$\footnote{Analytical expressions of these can be found in a supplementary note given in https://sites.google.com/site/kyamazak.}.  

As numerical illustrations, we focus on the quadratic case $h(z) = z^2$, $z \in \R$.
We numerically verify the optimality and then study the behavior with respect to $\delta$.  Throughout, let us fix $q = 0.05$. For the processes $X$ and $Y$, we let  $\sigma = 0.2$ and $\kappa = 1$ and, for the jump size distribution, we use the phase-type distribution given by $m=6$ 
that approximates the Weibull random variable with parameter  $(2,1)$ (see \cite{Egami_Yamazaki_2010_2} for the values of ${\bm T}$ and $\bm \alpha$).

%
% \begin{align*}
%&{\bm T} = \left[ \begin{array}{rrrrrr}   -5.6546  &  0.0000   &      0.0000  &  0.0000  &  0.0000  &  0.0000 \\
%    0.6066  & -5.6847 &   0.0000  &  0.0166  &  0.0089  &  5.0526 \\
%    0.2156  &  4.3616  & -5.6485  &  0.9162 &   0.1424 &   0.0126 \\
%    5.6247 &   0.0000   & 0.0000  & -5.6786  &  0.0000  &  0.0000 \\
%    0.0107  &  0.0000  &  0.0000 &   5.7247 &  -5.7420   & 0.0000 \\
%    0.0136  &  0.0000 &   0.0000  &  0.0024 &   5.7022 &  -5.7183 \end{array} \right],  \quad {\bm \alpha} =    \left[ \begin{array}{l}
%    0.0000 \\
%    0.0007 \\
%    0.9961 \\
%    0.0000 \\
%    0.0001 \\
%    0.0031  \end{array} \right];
%\end{align*}
%this gives an approximation to the Weibull random variable with parameter  $(2,1)$ (see \cite{Egami_Yamazaki_2010_2} for the accuracy of the approximation).

%We first illustrate the computation of the optimal threshold level $b^*$ and the associated value function.  Here 
We fix $\tilde{\gamma} = 5.5$, $\delta = 5$ (and hence $\tilde{\gamma}_Y = 0.5$).  As it has been shown in Section \ref{subsection_candidate}, the function $I$ is increasing. Hence, in general the bisection method can be applied to identify the value of $b^*$.  
%For our choice of $h$ as a quadratic function, as is clear from Lemma \ref{lemma_phase_type} (or more directly from Example \ref{example_quadratic}), $I$ is an affine function of $b$ and hence the value of $b^*$ can be computed directly.    
Using the obtained $b^*$, we compute the value function $v_{b^*}(x) = v_{b^*}^{(1)}(x) + v_{b^*}^{(2)}(x) 1_{\{x > b^*\}}$.

In order to confirm the optimality of the refraction strategy, we plot on the top of Figure \ref{figure_caption_optimality}, for the cases $\beta = -5$ and $\beta =5$, the value function  $v_{b^*}(x)$ along with suboptimal NPV's $v_{b}(x)$ for $b \in \{b^*-1, b^*-0.5, b^*+0.5, b^*+1\}$.  We observe that $v_{b^*}$ is indeed minimal (uniformly in $x$) and $b^*$ is the sole choice that makes the value function smooth and convex.  In addition, the slope at $b^*$ of $v_{b^*}(x)$ equals $\beta$. This is consistent with our observation that $v_{b^*}(x)$ satisfies \eqref{equiv_inequality2}.

%Figure \ref{figure_caption_optimality} shows
%
%
%In the left panel of Figure \ref{figure_caption_optimality}, we show the  function $I(b)$.  As it has been shown in Section \ref{subsection_candidate}, it is indeed strictly increasing.  Hence, the bisection method can be used to obtain $b^*$ (or using directly the expression in Example \ref{example_quadratic} for the quadratic case).

\begin{figure}[htbp]
\begin{center}
\begin{minipage}{1.0\textwidth}
\centering
\begin{tabular}{cc}
 \includegraphics[scale=0.37]{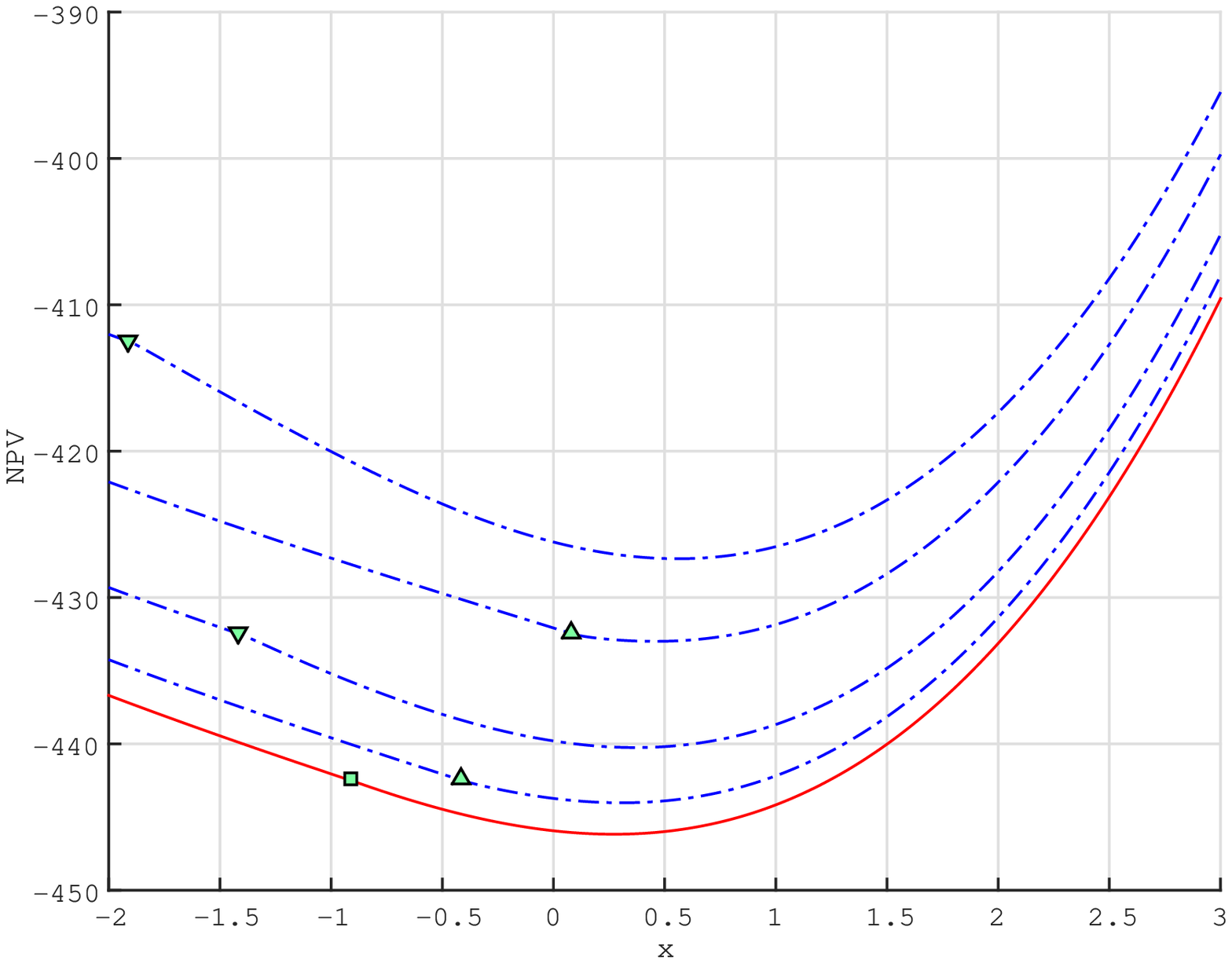} & \includegraphics[scale=0.37]{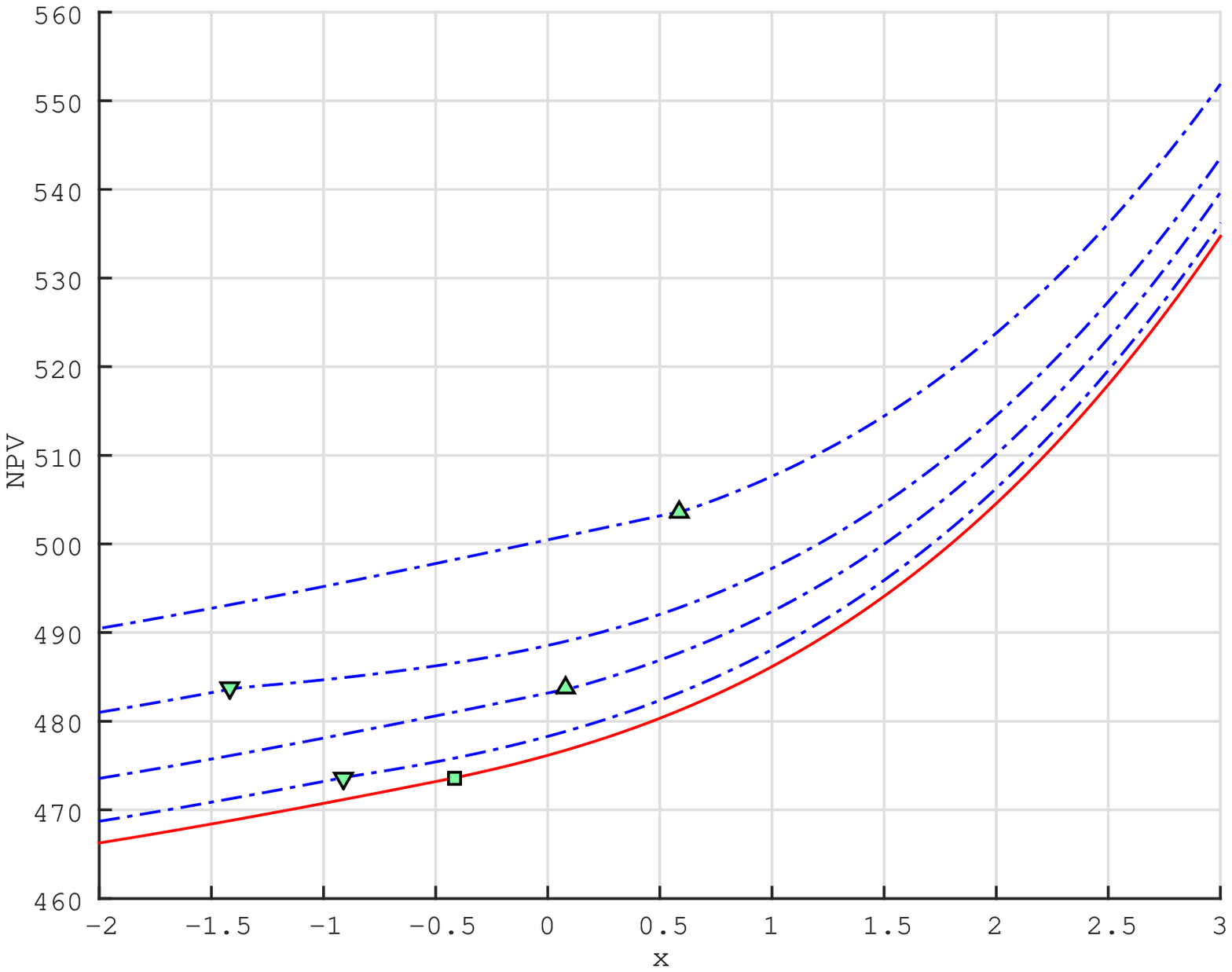}  \\
  \includegraphics[scale=0.37]{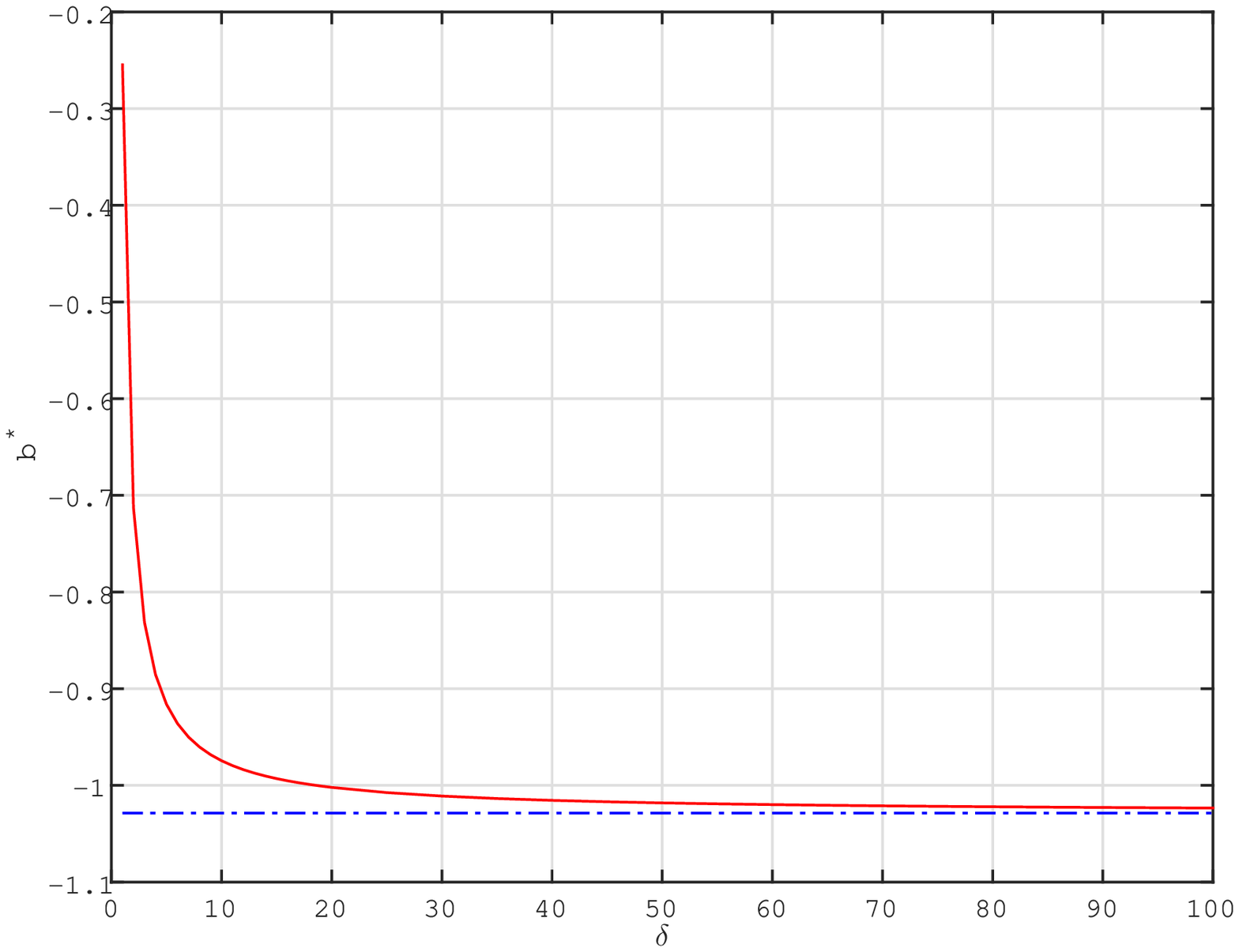} & \includegraphics[scale=0.37]{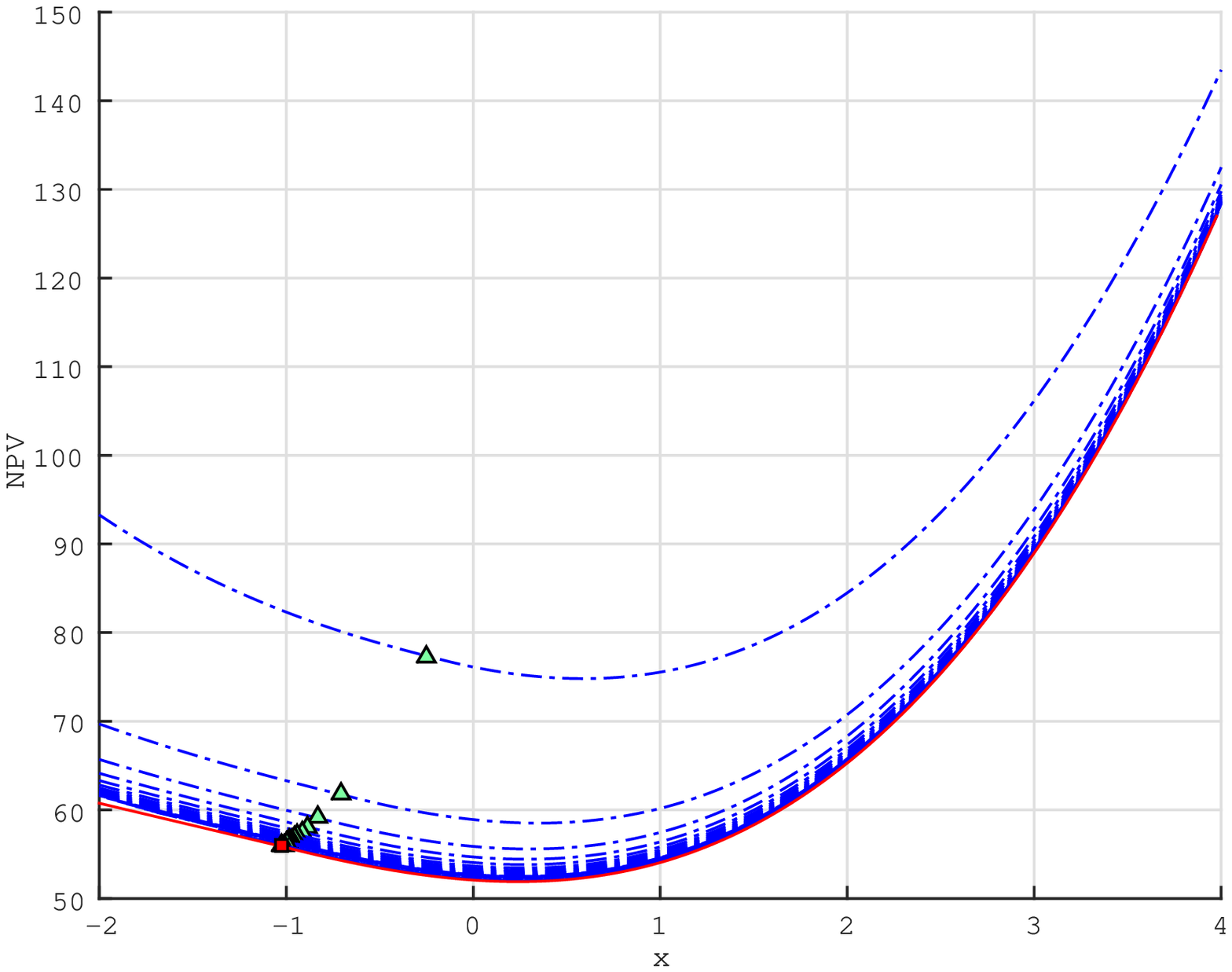} 
 \end{tabular}
\end{minipage}
\caption{\footnotesize (Top) Plots of $v_b(x)$ for the cases $\beta = -5$ (left) and $\beta = 5$ (right). Each panel shows $v_{b^*}(x)$ (solid) in comparison to $v_{b}(x)$ (dotted) for $b \in \{b^*-1, b^*-0.5,b^*+0.5, b^*+1\}$.  The point $(b^*, v_{b^*}(b^*))$ is indicated by the square mark on the solid line. Up-pointing and down-pointing triangles show the points $(b,v_{b}(b))$ for $b > b^*$ and $b < b^*$, respectively. 
(Bottom) Plots of convergence as $\delta \uparrow \infty$. The left panel shows $b^*(\delta)$ for $\delta$ running from $1$ to $100$.  The value of $b^*(\infty)$ is indicated by the dotted line. On the right panel,  the functions
$\tilde{v}(x; \delta)$ as in \eqref{v_tilde_delta} are shown as dotted lines for $\delta \in \{ 1,2,\ldots, 20, 40, 60,80,100\}$.  The solid line shows the function $\tilde{v}(x; \infty)$. The square mark indicates  the point $(b^*(\infty), \tilde{v}(b^*(\infty); \infty))$ while the up-pointing triangles show $(b^*(\delta),\tilde{v}(b^*(\delta); \delta))$.
} \label{figure_caption_optimality}
\end{center}
\end{figure}

%We now study the behavior with respect to the unit cost (or reward) $\beta$.  It is noted that with our choice of $h$, by Lemma \ref{lemma_classification} $b^*$ is always finite for any choice of $\beta \in \R$. Figure \ref{figure_caption_beta}
% shows the value functions for $\beta = -10, -9, \ldots, 9, 10$.  As it is clear analytically, it is monotonically increasing in the value $\beta$.  In addition, $b^*$ is increasing in $\beta$ (which can also be confirmed easily from \eqref{def_I_new}).
%
%\begin{figure}[htbp]
%\begin{center}
%\begin{minipage}{1.0\textwidth}
%\centering
%\begin{tabular}{c}
% \includegraphics[scale=0.5]{figure_beta} \end{tabular}
%\end{minipage}
%\caption{The sensitivity with respect to $\beta$.  The value function $v_{b^*}$ is plotted for $\beta = -10,-9, \ldots, 9, 10$. It is monotonically increasing in $\beta$.  The points $v_{b^*}(b^*)$ are indicated by the square marks.} \label{figure_caption_beta}
%\end{center}
%\end{figure}

We then analyze the convergence as $\delta \uparrow \infty$.  As we have studied in Section \ref{section_convergence}, we fix the process $Y$ with $\tilde{\gamma}_Y = 0.5$ and confirm the convergence $\tilde{v}(x; \delta)$ of \eqref{v_tilde_delta} to $\tilde{v}(x; \infty)$ of  \eqref{v_tilde_infty}. Throughout, we fix $\tilde{\beta} = 5$ ($\beta = -5$).
At the bottom of Figure \ref{figure_caption_optimality}, we first plot in the left panel  the values of $b^*(\delta)$ with respect to $\delta$ along with the value of $b^*(\infty)$.
We see that $b^*(\delta)$ is decreasing in $\delta$ and indeed converges to $b^*(\infty)$ as $\delta$ gets large.  This is consistent with Proposition \ref{convergence_b_star}. In the right panel, we show the value functions  $\tilde{v}(x; \delta)$ as in \eqref{v_tilde_delta} for $\delta \in \{ 1,2,\ldots, 20, 40, 60,80,100\}$ along with the function $\tilde{v}(x; \infty)$.  As has been shown in Theorem \ref{theorem_convergence},   $\tilde{v}(x; \delta)$  is decreasing in $\delta$ and converges to  $\tilde{v}(x; \infty)$.

\appendix

\section{Proof of Lemma \ref{lemma_u_b}} \label{proof_lemma_u_b}
\subsection{Proof of \eqref{v_b_x_derivative_b}}
(i)
%We shall first show here that we can choose sufficiently small $-M < 0 \wedge b$ and $c > 0$ such that
%\begin{align*}
%y \mapsto h(y)\frac {|r_{b}(x+\epsilon, y) - r_b(x, y)| } \epsilon
%\end{align*}
%is bounded in $\epsilon \in (0, c]$ by some  function that is integrable over $(-\infty, -M]$.
%
In order to show that the derivative can go into the integral, we shall show that for sufficiently small $-M$,
\begin{align}
\frac \partial {\partial x}\int_{-\infty}^{-M} h(y) r_{b}^{(1)} (x, y) \diff y = \int_{-\infty}^{-M} h(y) \frac \partial {\partial x} r_{b}^{(1)} (x, y) \diff y. \label{interchange_below_M}
\end{align}

We first see that, for $\epsilon > 0$,
\begin{align*}
r_b^{(1)} (x+ \epsilon, y) =  \mathrm{e}^{\Phi(q) \epsilon} r_b^{(1)} (x, y) - W^{(q)} (x+\epsilon - y) +  \mathrm{e}^{\Phi(q) \epsilon}W^{(q)}(x-y), \quad y < b,
\end{align*}
and hence
\begin{align*}
\frac {r_b^{(1)} (x+ \epsilon, y) - r_b^{(1)} (x, y)} \epsilon = \frac { \mathrm{e}^{\Phi(q) \epsilon} -1} \epsilon r_b^{(1)} (x, y) -  \mathrm{e}^{\Phi(q) (x+\epsilon-y)} \frac { W_{\Phi(q)} (x+\epsilon - y) - W_{\Phi(q)}(x-y)   } \epsilon.
\end{align*}

Here $( \mathrm{e}^{\Phi(q) \epsilon} - 1) /\epsilon$ is bounded in $\epsilon > 0$ on compacts and $v_b^{(1)}(x) = \int_{-\infty}^\infty h(y) r_b^{(1)} (x, y) \diff y$ is well-defined.  Hence, for the proof of \eqref{interchange_below_M}, it is sufficient to show that
\begin{align*}
y \mapsto H(y; x, \epsilon) := \left| h(y)  \mathrm{e}^{-\Phi(q)y}  \frac {W_{\Phi(q)} (x+\epsilon - y) - W_{\Phi(q)}(x-y)} \epsilon \right|
\end{align*}
 is bounded in $\epsilon > 0$ by a function that is integrable over $(-\infty, -M)$.

First we observe that, by Remark \ref{remark_smoothness_zero} (3), $(W_{\Phi(q)} (x+\epsilon - y) - W_{\Phi(q)}(x-y)) / \epsilon \leq  {\sup}_{0 \leq \epsilon' \leq \epsilon}W_{\Phi(q)}' ((x+\epsilon'-y)+)$.
Moreover, for any $y < 0$ and $0 < \epsilon' < \epsilon$,  Remark \ref{remark_smoothness_zero} (3) gives
\begin{align*}
\frac {W_{\Phi(q)}'((x+\epsilon' - y)+)} {W_{\Phi(q)}'((x - y)+)} \leq \frac {W_{\Phi(q)}(x+\epsilon' - y)} {W_{\Phi(q)}(x - y)} \leq \frac {W_{\Phi(q)}(x+\epsilon - y)} {W_{\Phi(q)}(x - y)}.
\end{align*}
Fix any $c,k > 0$. By \eqref{W_q_limit} (and hence $W_{\Phi(q)}(x+ k)/W_{\Phi(q)}(x) \xrightarrow{x \uparrow \infty}1$), we can choose sufficiently small $-M < 0$, such that for any $y \in (-\infty, -M]$, $ {W_{\Phi(q)}(x+k - y)} / {W_{\Phi(q)}(x - y)} \leq 1+ c$.
%Hence we have a bound: for $0 < \epsilon < k$, $W'_{\Phi(q)} (x+\epsilon' - y) \leq (1+c) W'_{\Phi(q)} (x- y)$.
%\begin{multline*}
%\epsilon^{-1} |r_{b+\epsilon} (x, y) - r_{b}(x,y) | \\
%\leq (1+c) e^{\Phi(q)x}\Big[ e^{-\Phi(q)y}  W_{\Phi(q)}' (b-y)  + e^{-\Phi(q)y}  \varphi(q)  \int_0^\infty e^{-(\varphi(q) -\Phi(q)) z} W_{\Phi(q)}' (z+b-y) \diff z \Big]. \end{multline*}
%Hence, we are left to show that $y \mapsto h(y) e^{-\Phi(q)y}  W_{\Phi(q)}' (b-y) $ and $y \mapsto e^{-\Phi(q)y}  h(y) \int_0^\infty e^{-(\varphi(q) -\Phi(q)) z} W_{\Phi(q)}' (z+b-y) \diff z$ are integrable over $(-\infty, -M)$.
Collecting these inequalities, we have a bound $H(y; x, \epsilon) \leq (1+c)  \mathrm{e}^{-\Phi(q) y}W'_{\Phi(q)} ((x- y)+) |h(y)|$ for  $0 < \epsilon < k$ and $y < -M$. Because \eqref{eq_integrability1} is finite, we have the claim. Hence \eqref{interchange_below_M} holds.

(ii) By (i), the derivative can be interchanged over the integral.  Hence,
%\begin{align*}
%v_b^{(1)}(x)
%&=\int_b^\infty (h(y)+ \beta \delta)  r_b(x, y) \diff y + \int_{-\infty}^b h(y) r_b(x,y) \diff y \\
%&:=   e^{\Phi(q) (x  - b)} \frac {\varphi(q)-\Phi(q)} {\delta \Phi(q)} \Big[  \int_0^\infty h(y+b)  e^{- \varphi(q) y} \diff y +   \frac {\beta \delta}  {\varphi(q)} \Big]  \\ &+ \int_{(x-b) \vee 0}^0 h(y+b) \Big[ e^{\Phi(q)(x-b)} \frac {\varphi(q)-\Phi(q)} {\Phi(q)} \int_0^\infty e^{-\varphi(q)z} W^{(q)\prime} (z-y) \diff z - W^{(q)}(x-b-y) \Big] \diff y \\
%&+ \int_{-\infty}^{(x-b) \vee 0} h(y+b) \Big[ e^{\Phi(q)(x-b)} \frac {\varphi(q)-\Phi(q)} {\Phi(q)} \int_0^\infty e^{-\varphi(q)z} W^{(q)\prime} (z-y) \diff z - W^{(q)}(x-b-y) \Big] \diff y
%\end{align*}
\begin{align} \label{v_b_1_after_derivative}
\begin{split}
v_b^{(1)\prime}&(x)
%&=\int_b^\infty (h(y)+ \beta \delta)  r_b(x, y) \diff y + \int_{-\infty}^b h(y) r_b(x,y) \diff y \\
=   -\mathbf{1}_{\{x < b\}} h(x) W^{(q)}(0) +  \mathrm{e}^{\Phi(q) (x  - b)} \frac {\varphi(q)-\Phi(q)} {\delta} \Big[  \int_0^\infty h(y+b)   \mathrm{e}^{- \varphi(q) y} \diff y +   \frac {\beta \delta}  {\varphi(q)} \Big]  \\ &+ \int_{-\infty}^0 h(y+b) \Big[  \mathrm{e}^{\Phi(q)(x-b)} (\varphi(q)-\Phi(q))  \int_0^\infty  \mathrm{e}^{-\varphi(q)z} W^{(q)\prime} (z-y) \diff z - W^{(q)\prime}(x-b-y) \Big] \diff y,
\end{split}
\end{align}
where the first term appears due to the discontinuity of the scale function at zero as in Remark \ref{remark_smoothness_zero} (2) for the case of bounded variation.

In order to apply the integration by parts, we first confirm that
\begin{align} \label{product_h_vanish_in_limits}
h(y+b) \Big[  \mathrm{e}^{\Phi(q)(x-b)} (\varphi(q)-\Phi(q)) \int_0^\infty  \mathrm{e}^{-\varphi(q)z} W^{(q)} (z-y) \diff z - W^{(q)}(x-b-y) \Big] \xrightarrow{y \downarrow -\infty} 0.
\end{align}
Indeed, %by \eqref{v_b_expression} and because $v_{b}^{(1)}(x)$ is finite 
by \eqref{h_r_abs_integrable1}, $\lim_{y \downarrow -\infty}h(y+b) r_b^{(1)}(x, y+b) = 0$.
In addition,
%\begin{align*}
%&r_b^{(1)}(x, y+b)- \Big[  \mathrm{e}^{\Phi(q)(x-b)} (\varphi(q)-\Phi(q)) \int_0^\infty  \mathrm{e}^{-\varphi(q)z} W^{(q)} (z-y) \diff z - W^{(q)}(x-b-y) \Big] \\
%&=  \mathrm{e}^{\Phi(q)(x-b)} \frac {\varphi(q)-\Phi(q)} {\Phi(q)} \int_0^\infty  \mathrm{e}^{-\varphi(q)z} \Theta^{(q)} (z-y) \diff z \\ &=   \mathrm{e}^{\Phi(q)(x-b)} \frac {\varphi(q)-\Phi(q)} {q} \E [ \mathrm{e}^{\varphi(q) \underline{X}_{\EQ}} \mathbf{1}_{\{ \underline{X}_{\EQ} < y \}}] \in [0,  \mathrm{e}^{\Phi(q)(x-b)} \frac {\varphi(q)-\Phi(q)} {q}  \mathrm{e}^{\varphi(q) y}].\end{align*}
\begin{align*}
&r_b^{(1)}(x, y+b)- \Big[  \mathrm{e}^{\Phi(q)(x-b)} (\varphi(q)-\Phi(q)) \int_0^\infty  \mathrm{e}^{-\varphi(q)z} W^{(q)} (z-y) \diff z - W^{(q)}(x-b-y) \Big] \\
&=  \mathrm{e}^{\Phi(q)(x-b)} \frac {\varphi(q)-\Phi(q)} {\Phi(q)} \int_0^\infty  \mathrm{e}^{-\varphi(q)z} \Theta^{(q)} (z-y) \diff z =   \mathrm{e}^{\Phi(q)(x-b)} \frac {\varphi(q)-\Phi(q)} {q} e^{-\varphi(q)y}\E [ \mathrm{e}^{\varphi(q) \underline{X}_{\EQ}} \mathbf{1}_{\{ \underline{X}_{\EQ} < y \}}]\\
&\in \Big[0,  \mathrm{e}^{\Phi(q)(x-b)} \frac {\varphi(q)-\Phi(q)} {q} \p \{\underline{X}_{\EQ}<y \}\Big].
\end{align*}
Using identity \eqref{finiteness_laplace_negagtive} %(the existence of an exponential moments for $\underline{X}_{\EQ}$)  
together with Assumption \ref{assump_h}, 
\begin{align}
\p \{\underline{X}_{\EQ}<y \}h(y+b) \xrightarrow{y \downarrow -\infty} 0. \label{prob_exp_inf_vanish}
\end{align} 
% Hence, 
%the product of this and $h(y+b)$ vanishes as $y \downarrow \infty$.
Hence, \eqref{product_h_vanish_in_limits} holds.

Using \eqref{scale_function_laplace}, \eqref{def_varphi} and \eqref{product_h_vanish_in_limits}, together with the following identity
\begin{align}
\int_0^\infty  \mathrm{e}^{-\varphi(q) z} W^{(q)} (z) \diff z  = (\psi(\varphi(q))-q)^{-1}= (\varphi (q) \delta)^{-1}, \label{laplace_special_case}
\end{align}
integration by parts yields
\begin{align*}
v_b^{(1) \prime}(x)
%&=\int_b^\infty (h(y)+ \beta \delta)  r_b(x, y) \diff y + \int_{-\infty}^b h(y) r_b(x,y) \diff y \\
&=    \mathrm{e}^{\Phi(q) (x  - b)} \frac {\varphi(q)-\Phi(q)} {\delta \varphi(q)} \Big[   \int_0^\infty  \mathrm{e}^{-\varphi(q) y} h'(y+b) \diff y +   {\beta \delta}   \Big]  + h(b)   W^{(q)}(x-b) \\ &+\int_{-\infty}^0 h'(y+b)\Big[  \mathrm{e}^{\Phi(q)(x-b)} (\varphi(q)-\Phi(q)) \int_0^\infty  \mathrm{e}^{-\varphi(q)z} W^{(q)} (z-y) \diff z - W^{(q)}(x-b-y) \Big] \diff y.
\end{align*}
Taking the difference between this and  $\int_{-\infty}^\infty h'(y) r_{b}^{(1)}(x,y) \diff y$, we have the result.

\subsection{Proof of \eqref{v_b_x_derivative_b_2}}
In view of \eqref{v_J_diff_b_star}, let us decompose $v_{b}^{(2)}(x) = v_{b}^{(2,1)}(x) + \delta v_{b}^{(2,2)}(x)$ where
\begin{align*}
 \begin{split}
v_{b}^{(2,1)}(x)  &:=\int_{0}^\infty (h(y+b) + \beta \delta) \Big\{   \mathrm{e}^{-\varphi(q)y}M(x; b) - \mathbb{W}^{(q)}(x-b-y) \Big\} \diff y, \\
v_{b}^{(2,2)}(x)  &:= \int_{-\infty}^{0} h(y+b) \Big\{ M(x; b) \int_{0}^\infty  \mathrm{e}^{-\varphi(q) z} W^{(q)\prime}(z-y) \diff z - \int_{b}^x \mathbb{W}^{(q)}(x-z) W^{(q)\prime} (z-b-y) \diff z \Big\} \diff y.
\end{split}
\end{align*}

%\begin{align*}
%&v_{b^*} (x) - J(x; b^*) \\ &=\int_{b^*}^\infty (h(y) + \red{\beta}\delta) \Big\{  e^{-\varphi(q)(y-b)}M(x; b^*) - \mathbb{W}^{(q)}(x-y) \Big\} \diff y \\
%&+ \int_{-\infty}^{b^*} h(y) \Big\{ \delta M(x; b^*) e^{\varphi(q) b^*} \int_{b^*}^\infty e^{-\varphi(q) z} W^{(q)\prime}(z-y) \diff z - \delta \int_{b^*}^x \mathbb{W}^{(q)}(x-z) W^{(q)\prime} (z-y) \diff z \Big\} \diff y.
%\end{align*}
For the function $v_{b}^{(2,1)}$, integration by parts gives
\begin{multline*}
%&\int_{0}^\infty (h(y+b^*) +  \red{\beta} \delta) \Big\{  e^{-\varphi(q)y}M(x; b^*) - \mathbb{W}^{(q)}(x-b^*-y) \Big\} \diff y \\
%&= \Big[ (h(y+b^*)+  \red{\beta} \delta) \Big\{- \frac 1 {\varphi(q)} e^{-\varphi(q)y}M(x; b^*) + \overline{\mathbb{W}}^{(q)}(x-b^*-y) \Big\}\Big]_{y=0}^{y = \infty} \\ &+ \int_0^\infty h'(y+b^*)  \Big\{ \frac 1 {\varphi(q)}  e^{-\varphi(q)y}M(x; b^*) - \overline{\mathbb{W}}^{(q)}(x-b^*-y) \Big\} \diff y \\
v_{b}^{(2,1)}(x) =  (h(b)+  \beta \delta) \Big\{ \frac {M(x; b)} {\varphi(q)} - \int_0^{x-b} \mathbb{W}^{(q)}(z) \diff z \Big\} + \int_{0}^\infty h'(y+b)  \Big\{ \frac {M(x; b)} {\varphi(q)}   \mathrm{e}^{-\varphi(q)y} - \int_0^{x-b-y}\mathbb{W}^{(q)}(z) \diff z \Big\} \diff y.
\end{multline*}
Differentiating this and noting that
\begin{align}
M'(x;b)  &= (\varphi(q)-\Phi(q)) \mathbb{W}^{(q)} (x-b)  + \Phi(q) M(x; b),  \label{M_prime_relation}
\end{align}
we have
\begin{multline} \label{v_prime_21}
v_{b}^{(2,1) \prime}(x) =  \frac {\Phi(q)} {\varphi(q)}(h(b)+  \beta \delta) \Big\{ {-\mathbb{W}^{(q)} (x-b)  +  M(x; b)}  \Big\} \\ + \int_0^\infty h'(y+b)  \Big\{ \frac { (\varphi(q)-\Phi(q)) \mathbb{W}^{(q)} (x-b)  + \Phi(q) M(x; b)} {\varphi(q)}   \mathrm{e}^{-\varphi(q)y}- \mathbb{W}^{(q)}(x-b-y) \Big\} \diff y.
%    &= - (h(b^*)+  \beta \delta)  \frac { \Phi(q) } {\varphi(q)}  \Big[\mathbb{W}^{(q)} (x-b^*)  -  {M(x; b^*)} \Big]   + \int_0^\infty h'(y+b^*)  \Big\{ \frac {M'(x; b^*) } {\varphi(q)}  e^{-\varphi(q)y}- \mathbb{W}^{(q)}(x-b^*-y) \Big\} \diff y.
%&= - (h(b^*)+  \red{\beta} \delta)  \frac { \Phi(q)} {\varphi(q)}  \Big[ \mathbb{W}^{(q)} (x-b^*)  -  {M(x; b^*)} \Big]      \\
%&+ \Big[ h'(y) \Big\{   - \frac {e^{-\varphi(q)(y-b)}} {\varphi(q)^2} \Big[ (\varphi(q)-\Phi(q) )\mathbb{W}^{(q)} (x-b^*)  +   \Phi(q)  M(x; b^*)  \Big] + \overline{\mathbb{W}}^{(q)}(x-y) \Big\} \Big]_{y=b^*}^\infty \\
%&- \int_{b^*}^\infty h''(y)   \Big\{   - \frac {e^{-\varphi(q)(y-b)}} {\varphi(q)^2} \Big[ (\varphi(q)-\Phi(q) )\mathbb{W}^{(q)} (x-b^*)  +   \Phi(q) M(x; b^*)  \Big] + \overline{\mathbb{W}}^{(q)}(x-y) \Big\} \diff y \\
%&=- (h(b^*)+  \red{\beta} \delta)  \frac { \Phi(q)} {\varphi(q)}  \Big[ \mathbb{W}^{(q)} (x-b^*)  -  {M(x; b^*)} \Big]   \\
%&- \Big[ h'(b^*) \Big\{   - \frac 1 {\varphi(q)^2} \Big[ (\varphi(q)-\Phi(q) )\mathbb{W}^{(q)} (x-b^*)  +   \Phi(q)  M(x; b^*)  \Big] + \overline{\mathbb{W}}^{(q)}(x-b^*) \Big\} \Big] \\
%&+ \int_{b^*}^\infty h''(y)   \Big\{  \frac {e^{-\varphi(q)(y-b)}} {\varphi(q)^2} \Big[ (\varphi(q)-\Phi(q) )\mathbb{W}^{(q)} (x-b^*)  +   \Phi(q) M(x; b^*)  \Big] - \overline{\mathbb{W}}^{(q)}(x-y) \Big\} \diff y.
\end{multline}
%Here
%\begin{align*}
%&\frac {e^{-\varphi(q)(y-b)}} {\varphi(q)^2} \Big[ (\varphi(q)-\Phi(q) )\mathbb{W}^{(q)} (x-b^*)  +  M(x; b^*)  \Big] - \overline{\mathbb{W}}^{(q)}(x-y) \\
%&=    \frac {e^{-\varphi(q)(y-b)}} {\varphi(q)^2} \Big[ (\varphi(q)-\Phi(q) )\mathbb{W}^{(q)} (x-b^*)  \\ &+  (\varphi(q)-\Phi(q)) \Big[ \overline{\mathbb{W}}^{(q)} (x-b^*)  +  e^{-\Phi(q) b^*} \int_{b^*}^x \Phi(q) e^{\Phi(q) z} \overline{\mathbb{W}}^{(q)}(x-z) \diff z \Big]\Big] - \overline{\mathbb{W}}^{(q)}(x-y)
% \end{align*}
% Its derivative with respect to $y$ equals
%\begin{align*}
% -\frac {e^{-\varphi(q)(y-b)}} {\varphi(q)} \Big[ (\varphi(q)-\Phi(q) )\mathbb{W}^{(q)} (x-b^*)  \\ +  (\varphi(q)-\Phi(q)) \Big[ \overline{\mathbb{W}}^{(q)} (x-b^*)  +  e^{-\Phi(q) b^*} \int_{b^*}^x \Phi(q) e^{\Phi(q) z} \overline{\mathbb{W}}^{(q)}(x-z) \diff z \Big]\Big] + \mathbb{W}^{(q)}(x-y)
% \end{align*}

In order to deal with the function $v_{b}^{(2,2)}$, we shall first show the limit
 \begin{align}
\lim_{y \downarrow -\infty} h(y+b) N(y; x, b) = 0,  \label{limit_N_y}\end{align}
where we define
\begin{align*}
N(y; x, b) :=  M(x; b)  \int_{0}^\infty  \mathrm{e}^{-\varphi(q) z} W^{(q)}(z-y) \diff z - \int_{b}^x \mathbb{W}^{(q)}(x-z) W^{(q)} (z-b-y) \diff z,
\end{align*}
so as to apply the integration by parts.

First, because $v^{(2)}_b$ is well defined and finite by \eqref{h_r_abs_integrable2}, $\lim_{y \downarrow -\infty} h(y+b) r_b^{(2)}(x, y+b)  = 0$.
In addition, \begin{multline*}
\delta^{-1} r_b^{(2)}(x, y+b) -  \Phi(q) N (y; x, b)
= M(x; b) \int_{0}^\infty  \mathrm{e}^{-\varphi(q) z} \Theta^{(q)}(z-y) \diff z -  \int_{b}^x \mathbb{W}^{(q)}(x-z) \Theta^{(q)} (z-b-y) \diff z,
%= \Big\{ M(x; b^*) \int_{0}^\infty  \mathrm{e}^{-\varphi(q) z} \Theta^{(q)}(z-y) \diff z -  \int_{b^*}^x \mathbb{W}^{(q)}(x-z) \Theta^{(q)} (z-b^*-y) \diff z \Big\}
\end{multline*}
whose absolute value is bounded by
%\begin{align*}
%&M(x; b) \int_{0}^\infty  \mathrm{e}^{-\varphi(q) z} \Theta^{(q)}(z-y) \diff z + \int_{b}^x \mathbb{W}^{(q)}(x-z) \Theta^{(q)} (z-b-y) \diff z \\
%&\leq M(x; b) \int_{0}^\infty  \mathrm{e}^{-\varphi(q) z} \Theta^{(q)}(z-y) \diff z+  \mathbb{W}_{\varphi(q)}(x-b) \int_{0}^\infty  \mathrm{e}^{\varphi(q) (x-z-b)} \Theta^{(q)} (z-y) \diff z  \\
%&= M(x; b) \E [ \mathrm{e}^{\varphi(q) \underline{X}_{\EQ}} \mathbf{1}_{\{ \underline{X}_{\EQ} < y \}}]+ \mathbb{W}_{\varphi(q)}(x-b)  \mathrm{e}^{-\varphi(q)(x-b)}\E [ \mathrm{e}^{\varphi(q) \underline{X}_{\EQ}} \mathbf{1}_{\{ \underline{X}_{\EQ} < y \}}] \\
%&\leq  \mathrm{e}^{\varphi(q) y}  [M(x; b) +  \mathbb{W}_{\varphi(q)}(x-b)  \mathrm{e}^{-\varphi(q)(x-b)} ], \end{align*}
\begin{align*}
&M(x; b) \int_{0}^\infty  \mathrm{e}^{-\varphi(q) z} \Theta^{(q)}(z-y) \diff z + \int_{b}^x \mathbb{W}^{(q)}(x-z) \Theta^{(q)} (z-b-y) \diff z \\
&\leq M(x; b) \int_{0}^\infty  \mathrm{e}^{-\varphi(q) z} \Theta^{(q)}(z-y) \diff z+  \mathbb{W}_{\varphi(q)}(x-b) \int_{0}^\infty  \mathrm{e}^{\varphi(q) (x-z-b)} \Theta^{(q)} (z-y) \diff z  \\
&= \Big(M(x; b) e^{-\varphi(q)y}\E [ \mathrm{e}^{\varphi(q) \underline{X}_{\EQ}} \mathbf{1}_{\{ \underline{X}_{\EQ} < y \}}]+ \mathbb{W}_{\varphi(q)}(x-b)  \mathrm{e}^{-\varphi(q)(x-b)}e^{-\varphi(q)y}\E [ \mathrm{e}^{\varphi(q) \underline{X}_{\EQ}} \mathbf{1}_{\{ \underline{X}_{\EQ} < y \}}] \Big) \frac {\Phi(q)} q \\
&\leq    [M(x; b) +  \mathbb{W}_{\varphi(q)}(x-b)  \mathrm{e}^{-\varphi(q)(x-b)} ]\p \{ \underline{X}_{\EQ}<y \} \frac {\Phi(q)} q, \end{align*}
where  $\mathbb{W}_{\varphi(q)}(z) :=  \mathrm{e}^{-\varphi(q) z} \mathbb{W}^{(q)}(z)$.
This together with \eqref{prob_exp_inf_vanish} shows the  limit \eqref{limit_N_y}.
%The same argument we did for \eqref{product_h_vanish_in_limits} implies $ \p \{\underline{X}_{\EQ}<y \}h(y+b) \xrightarrow{y \downarrow -\infty} 0$ and hence shows the limit \eqref{limit_N_y}.

Now, integration by parts followed by Fubini's theorem, together with the identities \eqref{laplace_special_case} and $\int_{b}^x\mathbb{W}^{(q)} (x-z) W^{(q)} (z-b)\diff z
% &= \int_0^{x-b}\mathbb{W}^{(q)} (x-b-z) W^{(q)} (z)\diff z \\ 
= \delta^{-1} (\int_0^{x-b}\mathbb{W}^{(q)} (z) \diff z -  \int_0^{x-b}{W}^{(q)} (z) \diff z )$,
gives \begin{align*}
v_{b}^{(2,2)}(x)
%&= \Big[ h(y) \Big\{ - M(x; b^*) e^{\varphi(q) b^*} \int_{b^*}^\infty e^{-\varphi(q) z} W^{(q)}(z-y) \diff z + \int_{b^*}^x \mathbb{W}^{(q)}(x-z) W^{(q)} (z-y) \diff z \Big\} \Big]_{y = -\infty}^{y = b^*} \\
%&+ \int_{-\infty}^{b^*} h'(y) \Big\{ M(x; b^*) e^{\varphi(q) b^*} \int_{b^*}^\infty e^{-\varphi(q) z} W^{(q)}(z-y) \diff z -  \int_{b^*}^x \mathbb{W}^{(q)}(x-z) W^{(q)} (z-y) \diff z \Big\} \diff y  \\
&=  \frac {h(b)} \delta \Big\{ - \frac {M(x; b)} {\varphi (q)} +  \int_0^{x-b}\mathbb{W}^{(q)} (z) \diff z -  \int_0^{x-b}{W}^{(q)} (z) \diff z\Big\}  + \int_{b}^x  \mathbb{W}^{(q)}(x-z)  A(z; b) \diff z.
%&+ \int_{-\infty}^{0} h'(y+b) \Big\{  M(x; b)  \int_{0}^\infty e^{-\varphi(q) z} W^{(q)}(z-y) \diff z -  \int_{b}^x \mathbb{W}^{(q)}(x-z) W^{(q)} (z-b-y) \diff z \Big\} \diff y.
\end{align*}
Here we define
\begin{align*}
A(z; b) := \int_{-\infty}^0 h'(y+b) \Big\{ (\varphi(q)-\Phi(q))   \mathrm{e}^{-\Phi(q) (b-z)} \int_{0}^\infty  \mathrm{e}^{-\varphi(q) w} W^{(q)}(w-y) \diff w - W^{(q)} (z-y-b)  \Big\} \diff y,
\end{align*}
which is finite for all $z \in [b, x]$. Indeed, we have $A(z; b)= e^{-\Phi(q) (z-x)} A(x;b) + \int_{-\infty}^0 h'(y+b) (e^{- \Phi(q) (z-x)}W^{(q)} (x-y-b)- W^{(q)} (z-y-b)  ) \diff y$ where the integral part is finite by similar arguments as in Remark \ref{remark_finiteness_v_1_2}; if it is infinity, it contradicts with that $v_b^{(2,2)}$ is finite.

Hence, taking a derivative and by \eqref{M_prime_relation},
\begin{align*}
v_{b}^{(2,2) \prime}(x) &=   \frac {h(b)} \delta \Big\{ - \frac {M'(x; b)} {\varphi (q)} +   \mathbb{W}^{(q)} (x-b) -  W^{(q)} (x-b) \Big\}  + \int_{-\infty}^0 h'(y+b) B(y, x,b) \diff y \\
&=    \frac {h(b)} \delta \Big\{  \frac {\Phi(q)} {\varphi(q)} \Big( \mathbb{W}^{(q)} (x-b) -  M(x; b) \Big) -  W^{(q)} (x-b) \Big\}  + \int_{-\infty}^0 h'(y+b) B(y, x,b) \diff y
\end{align*}
with
\begin{align*}
B(y, x,b)& := \mathbb{W}^{(q)}(0) \Big\{ (\varphi(q)-\Phi(q))   \mathrm{e}^{-\Phi(q) (b-x)} \int_{0}^\infty  \mathrm{e}^{-\varphi(q) w} W^{(q)}(w-y) \diff w - W^{(q)} (x-y-b)  \Big\}  \\
&+ \int_{b}^x  \mathbb{W}^{(q)\prime}(x-z)   \Big\{ (\varphi(q)-\Phi(q))   \mathrm{e}^{-\Phi(q) (b-z)} \int_{0}^\infty  \mathrm{e}^{-\varphi(q) w} W^{(q)}(w-y) \diff w - W^{(q)} (z-y-b)  \Big\} \diff z.
\end{align*}
By \eqref{some_result_int_by_parts} and  integration by parts, we can write
%we obtain
%$$\int_0^\infty  \mathrm{e}^{-\varphi(q)z} W^{(q)\prime} (z-y) \diff z + W^{(q)}(-y) )/ \varphi(q)= \int_0^\infty  \mathrm{e}^{-\varphi(q) z} W^{(q)} (z-y) \diff z,$$
%and therefore
\begin{align*}
B(y, x,b)
%&= \mathbb{W}^{(q)}(x-b) \Big\{ (\varphi(q)-\Phi(q))   \int_{0}^\infty e^{-\varphi(q) w} W^{(q)}(w-y) \diff w - W^{(q)} (-y)  \Big\}  \\
%&+ \int_{b}^x  \mathbb{W}^{(q)}(x-z)   \Big\{ \Phi(q) (\varphi(q)-\Phi(q))  e^{-\Phi(q) (b-z)} \int_{0}^\infty e^{-\varphi(q) w} W^{(q)}(w-y) \diff w - W^{(q)\prime} (z-y-b)  \Big\} \diff z \\
%&= \mathbb{W}^{(q)}(x-b) \Big\{ \frac {-\Phi(q)} {\varphi(q)}   W^{(q)} (-y)  + \frac {\varphi(q)-\Phi(q)} {\varphi(q)} \int_0^\infty e^{-\varphi(q) z} W^{(q)\prime}(z-y) \diff z \Big\}  \\
%&- M(x; b) \frac {\Phi(q)} {\varphi(q)}  [ W^{(q)} (-y)  + \int_0^\infty e^{-\varphi(q) z} W^{(q)\prime}(z-y) \diff z ]) + \int_{b}^x  \mathbb{W}^{(q)}(x-z)W^{(q)\prime} (z-y-b)  \diff z
&= \frac {(\varphi(q)-\Phi(q)) \mathbb{W}^{(q)} (x-b)  + \Phi(q) M(x; b)} {\varphi(q)}\Big( \int_0^\infty  \mathrm{e}^{-\varphi(q)z} W^{(q)\prime} (z-y) \diff z + W^{(q)}(-y)  \Big) \\ &- \mathbb{W}^{(q)} (x-b) W^{(q)}(-y) - \int_{b}^x \mathbb{W}^{(q)}(x-z) W^{(q)\prime} (z-y-b) \diff z.
\end{align*}

From this and \eqref{v_prime_21}, the expression
$\int_{-\infty}^\infty h'(y) r_{b}^{(2)}(x,y) \diff y - (v_{b}^{(2,1)\prime}(x)+\delta v_{b}^{(2,2) \prime}(x))
$
gives the result after some calculations.

\section{Proof of Lemma \ref{lemma_sufficiently_smooth}} \label{proof_lemma_sufficiently_smooth} In view of the expression \eqref{v_prime_equals_expectation}, $v_{b^*}'$ is continuous for all $x \in \R$. Hence, it remains to show that $v_{b^*}'$ is continuously differentiable for the case of unbounded variation.

By \eqref{resolvent_measure} and \eqref{v_prime_equals_expectation}, we can write $v_{b^*}'(x) = w^{(1)}(x) + w^{(2)}(x) \mathbf{1}_{\{x > b^*\}}$ where
\begin{align*}
w^{(1)}(x)
&:=  \frac {\varphi(q)-\Phi(q)} {\delta \Phi(q)}\int_{0}^\infty h'(y+b^*)   \mathrm{e}^{\Phi(q) (x-b^*) - \varphi(q) y } \diff y \\ &+ \int_{-\infty}^{0} h'(y+b^*) \Big[  \mathrm{e}^{\Phi(q)(x-b^*)} \frac {\varphi(q)-\Phi(q)} {\Phi(q)} \int_0^\infty  \mathrm{e}^{-\varphi(q)z} W^{(q)\prime} (z-y) \diff z - W^{(q)}(x-b^*-y) \Big] \diff y, \\
w^{(2)}(x) &:=\int_0^\infty h'(y+b^*)  \Big\{   \mathrm{e}^{-\varphi(q)y}M(x; b^*) - \mathbb{W}^{(q)}(x-b^*-y) \Big\} \diff y \\
&+ \delta \int_{-\infty}^{0} h'(y+b^*) \Big\{  M(x; b^*) \int_{0}^\infty  \mathrm{e}^{-\varphi(q) z} W^{(q)\prime}(z-y) \diff z -  \int_{b^*}^x \mathbb{W}^{(q)}(x-z) W^{(q)\prime} (z-b^*-y) \diff z \Big\} \diff y.
\end{align*}

Notice that \eqref{interchange_below_M} holds when $h$ is replaced with $h'$ (by Assumption  \ref{assump_h}). Hence, similarly to \eqref{v_b_1_after_derivative} (noting that $W^{(q)}(0) = 0$ for the case of unbounded variation),
\begin{align*} w^{(1)\prime}(x)
&=  \frac {\varphi(q)-\Phi(q)} {\delta}\int_{0}^\infty h'(y+b^*)   \mathrm{e}^{\Phi(q) (x-b^*) - \varphi(q) y} \diff y \\ &+ \int_{-\infty}^{0} h'(y+b^*) \Big[  \mathrm{e}^{\Phi(q)(x-b^*)} (\varphi(q)-\Phi(q))  \int_0^\infty  \mathrm{e}^{-\varphi(q)z} W^{(q)\prime} (z-y) \diff z - W^{(q)\prime}(x-b^*-y) \Big] \diff y.
\end{align*}
In addition, with the help of Fubini's theorem and noticing that $W^{(q)}(0) = 0$,\begin{align*}
w^{(2)\prime}(x) &=\int_{b^*}^\infty h'(y) \Big\{   \mathrm{e}^{-\varphi(q)(y-b^*)}M'(x; b^*) - \mathbb{W}^{(q)\prime}(x-y) \Big\} \diff y \\
&+ \delta \int_{b^*}^x \mathbb{W}^{(q)\prime}(x-z) \int_{-\infty}^{0} h'(y+b^*) \Big\{(\varphi(q)-\Phi(q))   \mathrm{e}^{-\Phi(q) (b^*-z)}  \int_{0}^\infty  \mathrm{e}^{-\varphi(q) w} W^{(q)\prime}(w-y) \diff w \\ &\qquad -  W^{(q)\prime} (z-b^*-y)  \Big\} \diff y \diff z.
\end{align*}
The continuity of $w^{(1)\prime}$ and $w^{(2)\prime}$ are clear for $x \neq b^*$. We have
\begin{align*}
\int_{-\infty}^{0} h'(y+b^*) \Big\{(\varphi(q)-\Phi(q))   \mathrm{e}^{-\Phi(q) (b^*-z)}  \int_{0}^\infty  \mathrm{e}^{-\varphi(q) w} W^{(q)\prime}(w-y) \diff w -  W^{(q)\prime} (z-b^*-y)  \Big\} \diff y \\
= \int_{-\infty}^{0} h'(y+b^*) \Big\{\Phi(q) r_b^{(1)}(z-b^*, y) -  \Theta^{(q)} (z-b^*-y)  \Big\} \diff y.
\end{align*}
By \eqref{def_Theta} and Remark \ref{remark_finiteness_v_1_2}, this is finite for $z \in [b^*, x]$.  Hence, $\lim_{x \downarrow b^*}w^{(2)\prime}(x) =0$.
\bibliographystyle{abbrv}
\bibliographystyle{apalike}

\bibliographystyle{agsm}

\bibliography{optimal_refracted}

	\end{document}